\theoremstyle{plain}
\newtheorem{rem}[theorem]{\hspace{1mm}Remark}
\newcommand{\cm}[1]{{\color{black}{#1}}}
\tikzset{% customization of pattern 
        hatch distance/.store in=\hatchdistance,
        hatch distance=5pt,
        hatch thickness/.store in=\hatchthickness,
        hatch thickness=5pt
        }
\pgfqpoint{\hatchdistance}{\hatchdistance}}% above right
\newcommand{\tmop}[1]{\ensuremath{\operatorname{#1}}}
\begin{document}
 
% --------------------------------------------------------------
%                         Start here
% --------------------------------------------------------------

\headers{GFEMs with locally optimal spectral approximations}{C. P. Ma, R. Scheichl, and T. J. Dodwell} 
 
\title{Novel design and analysis of generalized FE methods based on locally optimal spectral approximations}

\author{Chupeng Ma\thanks 
 {Institute for Applied Mathematics and Interdisciplinary Center for Scientific Computing, Heidelberg University, Im Neuenheimer Feld 205, Heidelberg 69120, Germany (\email{chupeng.ma@uni-heidelberg.de}, \email{r.scheichl@uni-heidelberg.de}).}
 \and R. Scheichl\footnotemark[1]
\and T. J. Dodwell\thanks{College of Engineering, Mathematics and Physical Sciences, University of Exeter, Exeter EX4 4PY, United Kingdom, and The Alan Turing Institute, London, NW1 2DB, United Kingdom (\email{T.Dodwell@exeter.ac.uk}).}}

\maketitle

\begin{abstract}
In this paper, the generalized finite element method (GFEM) for solving second order elliptic equations with rough coefficients is studied. New optimal local approximation spaces for GFEMs based on local eigenvalue problems involving a partition of unity are presented. These new spaces have advantages over those proposed in [I. Babuska and R. Lipton, Multiscale Model.\;\,Simul., 9 (2011), pp.~373--406]. First, in addition to a nearly exponential decay rate of the local approximation errors with respect to the dimensions of the local spaces, the rate of convergence with respect to the size of the oversampling region is also established. Second, the theoretical results hold for problems with mixed boundary conditions defined on general Lipschitz domains. Finally, an efficient and easy-to-implement technique for generating the discrete $A$-harmonic spaces is proposed which relies on solving an eigenvalue problem associated with the Dirichlet-to-Neumann operator, leading to a substantial reduction in computational cost. Numerical experiments are presented to support the theoretical analysis and to confirm the effectiveness of the new method.
\end{abstract}

\begin{keywords}
generalized finite element method, multiscale method, partition of unity, Kolomogrov n-width, local spectral basis
\end{keywords}

\begin{AMS}
65M60, 65N15, 65N55
\end{AMS}

\section{Introduction}\label{sec-1}
Numerous problems in science and engineering involve multiple scales. One example is the flow and transport of fluid within porous media, which often exhibit highly heterogeneous, multiscale variations in both permeability and porosity. Another example is the modelling of composite materials, widely used in high value engineering products, whereby a highly stiff material (e.g.\;\,carbon / graphine) is embedded within a compliant matrix (e.g.\;\,resin). Mathematical modelling of such materials or engineering systems leads to partial differential equations (PDEs) with highly osciallatory coefficients. Whilst, in many cases, only the macroscopic properties of the solution are of interest, they are often strongly influenced by the micro- and mesoscopic details of the media, making a direct discretization at the macroscale unreliable. However, direct numerical solution on a fine mesh that resolves all the small-scale features is computationally expensive and notoriously ill-conditioned \cite{butler2020high}. This motivates the development of {\it multiscale methods} which reduce the computational cost by efficiently incorporating physically important fine-scale information into a coarse-scale representation.

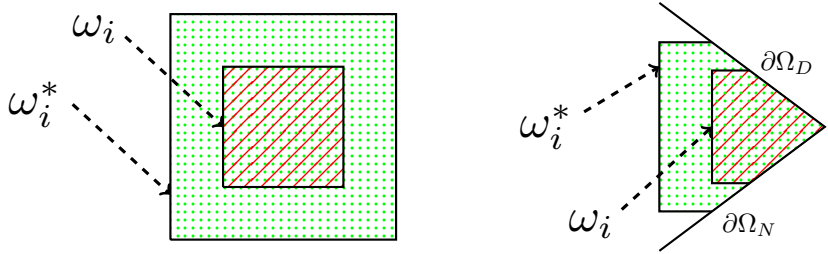
\begin{figure}\label{fig:1-1}
\centering
\begin{tikzpicture}
\draw[pattern=dots, pattern color=green] (0, 0) rectangle (3, 3);
%\draw[fill=white, pattern color=red, pattern=north east lines] (1, 1) rectangle (3, 3); 
\draw[pattern=north east hatch, hatch distance=2.5mm, hatch thickness=.5pt, pattern color=red] (0.7, 0.7) rectangle (2.3, 2.3); 
\draw[->, dashed, very thick] (-0.6,2.7)--(0.7,1.5);
\draw[thick](0.7,0.7)--(2.3,0.7)--(2.3,2.3)--(0.7,2.3)--(0.7,0.7);
\draw[thick](0,0)--(3,0)--(3,3)--(0,3)--(0,0);
\draw[->, dashed, very thick] (-1.3,1.8)--(0,0.6);
\node[scale=1.8] at (-1.0, 2.8) {$\omega_{i}$};
\node[scale=1.8] at (-1.8, 1.8) {$\omega^{\ast}_{i}$};

 \draw[pattern=north east hatch, hatch distance=2.5mm, hatch thickness=.5pt, pattern color=red] (7.7, 2.25)--(7.2, 2.25)--(7.2,0.75)--(7.7,0.75)--(8.7,1.5)--(7.7,2.25); 
 \draw[pattern=dots, pattern color=green] (7.2, 2.625)--(6.5, 2.625)--(6.5, 0.375)--(7.2,0.375)--(8.7,1.5)--(7.2, 2.625);
\draw[thick] (6.5,-0.15)--(8.7,1.5)--(6.5,3.15);
\draw[thick] (7.7, 2.25)--(7.2, 2.25)--(7.2,0.75)--(7.7,0.75);
\draw[thick] (7.2, 2.625)--(6.5, 2.625)--(6.5, 0.375)--(7.2,0.375);

\draw[->, dashed, very thick] (6.0,0.4)--(7.2,1.5);
\draw[->, dashed, very thick] (5.5,1.7)--(6.5,2.3);
   
\node[scale=1.8] at (5.6,0.2) {$\omega_{i}$};
\node[scale=1.8] at (5.0,1.5) {$\omega^{\ast}_{i}$};
\node at (8.2,2.35) {$\partial \Omega_{D}$};
\node at (7.7,0.2) {$\partial \Omega_{N}$};
\end{tikzpicture}
\caption{Illustration of a subdomain that lies within the interior of $\Omega$ (left) or intersects the boundary of $\Omega$ (right) and the associated oversampling domain.}
\end{figure}

The study of multiscale methods has been an active field over the past few decades and various methods have been developed. We restrict our attention here to one class of multiscale methods that aims at constructing localized multiscale basis functions as trial spaces for the finite element method (FEM). In the multiscale finite element method (MsFEM) \cite{hou1997multiscale,chen2003mixed,efendiev2009multiscale}, multiscale basis functions are constructed by solving boundary value problems associated with the original PDE on each coarse-grid block. Convergence of the MsFEM in the periodic setting was proved and an oversampling technique to reduce the resonance error was investigated in \cite{hou1999convergence,henning2013oversampling}. The MsFEM was later generalized to the Generalized Multiscale FEM in \cite{efendiev2013generalized,efendiev2014generalized,chung2015mixed}, where coarse trial spaces were constructed by a spectral decomposition of some snapshot spaces. Another method that has become popular in recent years is the localized orthogonal decomposition (LOD) method \cite{maalqvist2014localization,henning2014localized}. In this method, each nodal basis function of the coarse finite element space is modified with a correction containing fine-scale information. These corrections are first defined as solutions of some global problems, and then proved to decay exponentially fast, which justifies to localize the construction of the correctors. For more studies on alternative multiscale methods, we refer to \cite{hou2016optimal,owhadi2014polyharmonic,
owhadi2011localized,arbogast2006subgrid,ming2005analysis,weinan2007heterogeneous}.

In this paper, we deal with yet another, related multiscale method, the Multiscale Spectral Generalized Finite Element Method (MS-GFEM) introduced in \cite{babuska2011optimal} and further developed in \cite{babuvska2014machine,babuvska2020multiscale}. It is a generalized finite element method (GFEM) with local approximation spaces constructed by solving local spectral problems. The GFEM \cite{babuvska1997partition,melenk1995generalized} proposed by Babuska and Melenk is an extension of the FEM based on a domain decomposition technique combined with a partition of unity approach. In this method, the computational domain is partitioned into a collection of overlapping subdomains $\omega_{i}$ ($i=1,2,\cdots,m$) where the local approximation spaces are built. These local approximation spaces are then "glued together" by a partition of unity to build the trial space for the FEM. One advantage of the GFEM over the FEM is that one can exploit the structure of the PDE under consideration to construct local spaces with much better approximation properties than simple polynomials. In addition, the local computations can be performed in parallel naturally. In \cite{babuska2011optimal}, the solution to be approximated in a subdomain $\omega_{i}$ was decomposed into two orthogonal parts, one part being the solution of a local boundary value problem and the other part belonging to the $A$-harmonic space on $\omega_{i}$, that is
\begin{equation}
H_{A}(\omega_{i}) = \big\{u\in H^{1}(\omega_{i}):\;\int_{\omega_{i}}A({\bm x})\nabla u\cdot\nabla v\,d{\bm x} =0\;\;\forall v\in H_{0}^{1}(\omega_{i})\big\}.   
\end{equation}
Here $A({\bm x})$ is the given $L^{\infty}$-coefficient of the elliptic PDE under consideration. An optimal approximation space for approximating the $A$-harmonic part was constructed by using the characterization of the Kolmogrov $n$-width of a compact restriction operator $P$ from $H_{A}(\omega^{\ast}_{i})$ into $H_{A}(\omega_{i})$. Here, the $\omega_{i}^{\ast}\supset \omega_{i}$ are referred to as the oversampling domains as illustrated in \cref{fig:1-1}. It was shown that the $n$-dimensional optimal approximation space is spanned by the first $n$ eigenfunctions of an eigenvalue problem involving the restriction operator posed in the $A$-harmonic space $H_{A}(\omega^{\ast}_{i})$ and that the approximation converges nearly exponentially with respect to $n$. However, a theoretical investigation of how the local error varies with the size of the oversampling region was missing. Moreover, due to the use of a particular extension technique for boundary subdomains in the proof, the theoretical results in \cite{babuska2011optimal} only hold for problems with pure Dirichlet or Neumann boundary conditions defined on a $C^{1}$-smooth domain. 

Strategies for the numerical implementation of the MS-GFEM were discussed in \cite{babuvska2020multiscale}. The most expensive part of the whole computational work lies in the generation of the discrete $A$-harmonic spaces based on finite element approximations of the spaces $H_{A}(\omega^{\ast}_{i})$ over which the eigenvalue problems are solved. Indeed, the discrete $A$-harmonic space on a domain resolved by a finite element mesh is spanned by the $A$-harmonic extensions of the hat functions corresponding to the $M$ boundary nodes. In \cite{babuvska2020multiscale}, it was suggested that instead of generating an $M$-dimensional discrete $A$-harmonic space, the span of the $A$-harmonic extensions of $N\ll M$ boundary hat functions with wider support can be used as an approximation, which results in fewer local boundary value problems. However, using this method still requires to solve a large number of local problems, especially when the underlying FE mesh is very fine. Furthermore, how to choose the boundary hat functions and their support is a subtle issue in practical implementations.

In this paper, the results of \cite{babuska2011optimal,babuvska2020multiscale} are extended in several respects. First, optimal local approximation spaces for the GFEM based on eigenfunctions of local spectral problems involving a partition of unity are constructed. A similar eigenvalue problem was used to construct coarse spaces for the two-level overlapping Schwarz method with application to PDEs with rough coefficients in \cite{spillane2014abstract}. Instead of introducing a restriction operator as in \cite{babuska2011optimal}, it is shown that the multiplication of a function by one of the partition of unity functions constitutes a compact operator in $H_{A}(\omega^{\ast}_{i})$. In contrast to the traditional GFEM, which approximates the exact solution $u$ in each subdomain, our approach naturally leads to the approximation of $\chi_{i} u$ in each subdomain $\omega_{i}$, where $\chi_{i}$ is the partition of unity function supported on $\omega_{i}$. This makes the estimate of the global approximation error much simpler. Another new feature of our method is that it converges even without oversampling; see Remark \ref{rem:3-4}. Secondly, a sharper error bound for the optimal local approximation is derived. In addition to a nearly exponential decay rate with the dimension of the local spaces, the rate of convergence with respect to the size of the oversampling region is also established. In particular, it is shown that the convergence rate with respect to the dimension of the local spaces becomes higher with increasing oversampling size. Furthermore, the results in this paper hold for problems with mixed boundary conditions defined on general Lipschitz domains. The key to our proof for subdomains near the outer boundary lies in a different definition of the $A$-harmonic spaces on these subdomains, the use of a Caccioppoli-type argument, and a refined analysis of the resulting approximation spaces. Finally, an efficient and easy-to-implement method to generate the discrete $A$-harmonic spaces by solving a Steklov eigenvalue problem on each subdomain is proposed, similar to the one proposed and analysed in the context of the overlapping Schwarz method in \cite{dolean2012analysis}. In particular, the eigenfunctions corresponding to the finite eigenvalues of the Steklov eigenvalue problem span the discrete $A$-harmonic space. Moreover, without using all the eigenfunctions, a small number of discrete $A$-harmonic basis functions provide good numerical results in practice. In this way, the discrete $A$-harmonic spaces can be constructed by solving an eigenvalue problem once at a much lower computational cost than solving many local boundary value problems.

The rest of this paper is organized as follows. In \cref{sec-2}, we describe the problem considered in this paper and give a brief introduction of the GFEM. \Cref{sec-3} is devoted to the construction of the local particular functions and the optimal local approximation spaces. Upper bounds for the local approximation errors are also derived in this section. We discuss the numerical implementation of the method with focus on the construction of the discrete $A$-harmonic spaces in \cref{sec-4}. Numerical examples are given in \cref{sec-5} to validate our theoretical results and the effectiveness of our method.
\section{The GFEM}\label{sec-2}
We consider elliptic PDEs with mixed boundary conditions:
\begin{equation}\label{eq:1-1}
\left\{
\begin{array}{lll}
{\displaystyle -{\rm div}(A({\bm x})\nabla u({\bm x})) = f({\bm x}),\quad {\rm in}\;\, \Omega }\\[2mm]
{\displaystyle {\bm n} \cdot A({\bm x})\nabla u({\bm x})=g({\bm x}), \quad \quad \;\;{\rm on}\;\,\partial \Omega_{N}}\\[2mm]
{\displaystyle u({\bm x}) = q({\bm x}), \quad \qquad \qquad \qquad {\rm on}\;\,\partial \Omega_{D},}
\end{array}
\right.
\end{equation}
where $\Omega\subset \mathbb{R}^{d}$ ($d=2,3$) is a bounded domain with Lipschitz boundary $\partial \Omega$, $\partial\Omega_{D}\cap \partial\Omega_{N} = \emptyset$, and $\overline{\partial\Omega_{D}}\cup \overline{\partial\Omega_{N}} = \partial \Omega$. The vector ${\bm n}$ denotes the unit outward normal. We assume that the matrix $A({\bm x}) = (a_{ij}({\bm x}))_{1\leq i,j\leq d}\in (L^{\infty}(\Omega))^{d\times d}$ is symmetric and there exists $0<\alpha < \beta< +\infty$ such that
\begin{equation}\label{eq:1-1-0}
\alpha \sum_{i=1}^{d} \xi^{2}_{i} \leq \sum_{i,j=1}^{d}a_{ij}({\bm x}) \xi_{i}\xi_{j} \leq \beta  \sum_{i=1}^{d} \xi^{2}_{i},\quad \forall \xi= (\xi_{1},\cdots,\xi_{d})\in \mathbb{R}^{d},\quad {\bm x} \in\Omega.
\end{equation}
We suppose that $f\in L^{2}(\Omega)$, $g\in H^{-1/2}(\partial \Omega_{N})$, and $q\in H^{1/2}(\partial \Omega_{D})$. If $\partial\Omega_{D} = \emptyset$, we further assume that $f$ and $g$ satisfy the consistency condition 
\begin{equation}
\int_{\partial \Omega} g \,d{\bm s}+ \int_{\Omega}f \,d{\bm x}=0.
\end{equation}
The weak formulation of the problem \cref{eq:1-1} is to find $u_{0}\in H^{1}_{qD}(\Omega)$ such that 
\begin{equation}\label{eq:1-2}
a(u_{0},v) = F(v),\quad \forall v\in H^{1}_{0D}(\Omega),
\end{equation}
where 
\begin{equation}\label{eq:1-2-0}
\begin{array}{lll}
{\displaystyle H^{1}_{qD}(\Omega) = \big\{v\in H^{1}(\Omega)\;:\; v = q({\bm x}) \;\;{\rm on}\;\,\partial \Omega_{D}\big\},}\\[2mm]
{\displaystyle H^{1}_{0D}(\Omega) = \big\{v\in H^{1}(\Omega)\;:\; v = 0 \;\;{\rm on}\;\,\partial \Omega_{D}\big\},}
\end{array}
\end{equation}
and the bilinear form $a(\cdot,\cdot)$ and the functional $F$ are defined by
\begin{equation}\label{eq:1-2-1}
a(u,v) = \int_{\Omega} A({\bm x})\nabla u\cdot {\nabla v} \,d{\bm x},\quad F(v) = \int_{\partial \Omega_{N}} gv \,d{\bm s}+ \int_{\Omega}fv \,d{\bm x}.
\end{equation}
For ease of notation, we define 
\begin{equation}\label{eq:1-2-2}
a_{\omega}(u,v) = \int_{\omega} A({\bm x})\nabla u\cdot {\nabla v} \,d{\bm x}, \quad F_{\omega}(v) = \int_{\partial \omega\cap\partial \Omega_{N}} gv \,d{\bm s}+ \int_{\omega}fv \,d{\bm x},
\end{equation}
%and 
%\begin{equation}\label{eq:1-2-2-0}
%F_{\omega}(v) = \int_{\partial \omega\cap\partial \Omega_{N}} gv \,d{\bm s}+ \int_{\omega}fv \,d{\bm x}
%\end{equation}
and $\Vert u \Vert_{a,\,\omega} = \sqrt{a_{\omega}(u,u)}$ for any subdomain $\omega\subset \Omega$ and $u$, $v\in H^{1}(\omega)$. If $\omega = \Omega$, the domain is omitted from the subscript and we write $a(\cdot,\cdot)$ and $\Vert\cdot\Vert_{a}$ instead of $a_{\Omega}(\cdot,\cdot)$ and $\Vert\cdot\Vert_{a,\,\Omega}$.

Under the above assumptions, in the case that $\partial\Omega_{D} \neq \emptyset$, the equation \cref{eq:1-2} has a unique solution. If $\partial\Omega_{D} = \emptyset$, the solution is unique up to an additive constant. Let $u^{p}\in H^{1}_{qD}(\Omega)$ be a {\it particular function} that satisfies the Dirichlet boundary condition and $S_{n}(\Omega)$ an $n$-dimensional subspace of $H^{1}_{0D}(\Omega)$. We seek the approximate solution of \cref{eq:1-2}, denoted by $u^{G}= u^{p} + u^{s}$, in the affine space $u^{p} + S_{n}(\Omega)$ such that
\begin{equation}\label{eq:1-2-3}
a(u^{s}, v) = F(v) - a(u^{p}, v),\quad \forall v\in S_{n}(\Omega).
\end{equation}
It is a classical result that
\begin{equation}\label{eq:1-2-4}
u^{G} = {\rm argmin}\{\Vert u_{0} - v\Vert_{a}\,:\, v\in u^{p} + S_{n}(\Omega)\},
\end{equation}
where $u_{0}$ is the solution of \cref{eq:1-2}. Therefore, if there exists a $\psi \in u^{p} + S_{n}(\Omega)$ such that $\Vert u_{0}-\psi\Vert_{a}\leq \varepsilon$, then we have $\Vert u^{G} - u_{0} \Vert_{a}\leq \varepsilon$. In what follows, we describe the construction of the particular function $u^{p}$ and of the finite dimensional space $S_{n}(\Omega)$ in the GFEM.

Let $\{ \mathcal{O}_{i} \}_{i=1}^{M}$ be a collection of open sets that cover the computational domain $\Omega$ and $\{ \chi_{i} \}_{i=1}^{M}$ be the partition of unity subordinate to the open covering. For interior sets $\mathcal{O}_{i}\subset \Omega$, we relabel them as $\omega_{i} =\mathcal{O}_{i}$ and for sets $\mathcal{O}_{i}$ that intersect the boundary of $\Omega$, we write $\omega_{i} = \mathcal{O}_{i}\cap \Omega$. Then we have $ \cup_{i=1}^{M} \omega_{i} = \Omega$. In addition, we assume that each point ${\bm x}\in\Omega$ belongs to at most $\kappa$ subdomains. The partition of unity functions are assumed to satisfy the following properties:
\begin{equation}\label{eq:1-3}
\begin{array}{lll}
{\displaystyle 0\leq \chi_{i}({\bm x})\leq 1,\quad \sum_{i=1}^{M}\chi_{i}({\bm x}) =1, \quad \forall \,{\bm x}\in \Omega,}\\[4mm]
{\displaystyle \chi_{i}({\bm x})= 0, \quad \forall \,{\bm x}\in \Omega/\omega_{i}, \quad i=1,\cdots,M,}\\[2mm]
{\displaystyle \chi_{i}\in C^{1}(\omega_{i}),\;\;\max_{{\bm x}\in\Omega} |\nabla \chi_{i}({\bm x}) | \leq \frac{C_{1}}{diam\,(\omega_{i})},\quad i=1,\cdots,M.}
\end{array}
\end{equation}

Suppose that $u^{p}_{i} \in H^{1}(\omega_{i})$ is a local particular function and $S_{n_{i}}(\omega_{i})$ is a subspace of $H^{1}(\omega_{i})$ of dimension $n_{i}$. In particular, for a subdomain $\omega_{i}$ that shares a Dirichlet boundary with $\Omega$, we require that $u^{p}_{i} = q$ on $\partial \omega_{i}\cap \partial \Omega_{D}$ and functions in $S_{n_{i}}(\omega_{i})$ vanish on $\partial \omega_{i}\cap \partial \Omega_{D}$. The global particular function $u^{p}$ and the trial space $S_{n}(\Omega)$ for the GFEM are constructed from the local particular functions and from the local approximation spaces by using the partition of unity:
\begin{equation}\label{eq:1-3-0}
\begin{array}{lll}
{\displaystyle u^{p} = \sum_{i=1}^{M}\chi_{i}u^{p}_{i},\quad  S_{n}(\Omega) =\Big\{\sum_{i=1}^{M}\chi_{i}\phi_{i}\,:\, \phi_{i}\in S_{n_{i}}(\omega_{i})\Big\}. }
\end{array}
\end{equation}
In this way, $u^{p}\in H^{1}_{qD}(\Omega)$, $S_{n}(\Omega)\subset H^{1}_{0D}(\Omega)$, and $n= \sum_{i=1}^{M}n_{i}$. 

In traditional partition of unity finite element methods, the exact solution is approximated in each subdomain and an approximation theorem \cite[Theorem 1]{babuvska1997partition} is used to estimate the global error. In this paper, instead of approximating the exact solution $u_{0}$, we approximate $\chi_{i}u_{0}$ in each subdomain $\omega_{i}$, making the global error estimate much simpler as shown in the following theorem.
\begin{theorem}\label{thm:1-0}
Assume that there exists $\phi_{i}\in S_{n_{i}}(\omega_{i})$ and $\varepsilon_{i}>0$, $i=1,\cdots,M$, such that
\begin{equation}\label{eq:1-3-1}
\big\Vert \chi_{i}(u_{0}-u^{p}_{i}-\phi_{i})\big\Vert_{a,\,\omega_{i}}\leq \varepsilon_{i}\Vert u_{0}\Vert_{a,\,\omega^{\ast}_{i}},
\end{equation}
where $\omega_{i}\subset\omega_{i}^{\ast}\subset \Omega$. Let
\begin{equation}
\Psi = u^{p} + \sum_{i=1}^{M}\chi_{i}\phi_{i}.
\end{equation}
Then $\Psi\in H^{1}_{qD}(\Omega)$ and
\begin{equation}\label{eq:1-3-2}
\begin{array}{lll}
{\displaystyle \big\Vert u_{0} - \Psi \big \Vert_{a} \leq \sqrt{\kappa\kappa^{\ast}}\big(\max_{i=1,\cdots,M}\varepsilon_{i}\big)\Vert u_{0}\Vert_{a}.}
\end{array}
\end{equation}
Here we assume that each point ${\bm x}\in\Omega$ belongs to at most $\kappa^{\ast}$ subdomains $\omega_{i}^{\ast}$. 
\end{theorem}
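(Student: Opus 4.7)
The plan is to exploit the partition-of-unity identity $\sum_i \chi_i \equiv 1$ to reduce the global error to a sum of the local errors already controlled by hypothesis, and then to invoke the two finite-overlap assumptions (with constants $\kappa$ and $\kappa^{\ast}$) to pass between a norm of a sum and a sum of norms.

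First I would check that $\Psi \in H^{1}_{qD}(\Omega)$. This is immediate from the construction: $u^{p}\in H^{1}_{qD}(\Omega)$ by assumption on the local particular functions and the property of $\chi_{i}$ on Dirichlet-touching subdomains, while $\sum_{i}\chi_{i}\phi_{i}\in S_{n}(\Omega) \subset H^{1}_{0D}(\Omega)$ by definition \cref{eq:1-3-0}; the sum therefore satisfies the prescribed Dirichlet data.

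Next I would write the error as a partition-of-unity-weighted sum of the local errors. Since $\sum_{i=1}^{M}\chi_{i}=1$ on $\Omega$,
\begin{equation*}
u_{0}-\Psi \;=\; \sum_{i=1}^{M}\chi_{i}u_{0} \;-\; \sum_{i=1}^{M}\chi_{i}u^{p}_{i} \;-\; \sum_{i=1}^{M}\chi_{i}\phi_{i} \;=\; \sum_{i=1}^{M} e_{i},
\end{equation*}
where $e_{i}:= \chi_{i}(u_{0}-u^{p}_{i}-\phi_{i})$ is supported in $\overline{\omega_{i}}$ because $\chi_{i}$ vanishes outside $\omega_{i}$. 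The key step is then the finite-overlap inequality for the energy norm: at any ${\bm x}\in\Omega$ at most $\kappa$ of the gradients $\nabla e_{i}({\bm x})$ are nonzero, so by Cauchy--Schwarz on a sum with at most $\kappa$ nonzero entries, applied pointwise to $A^{1/2}\nabla e_{i}$, one obtains
\begin{equation*}
\bigl\Vert \textstyle\sum_{i} e_{i}\bigr\Vert_{a}^{2} \;\leq\; \kappa \sum_{i=1}^{M}\Vert e_{i}\Vert_{a,\,\omega_{i}}^{2}.
\end{equation*}

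Plugging in the hypothesis \cref{eq:1-3-1} gives $\sum_{i}\Vert e_{i}\Vert_{a,\,\omega_{i}}^{2}\leq (\max_{i}\varepsilon_{i})^{2}\sum_{i}\Vert u_{0}\Vert_{a,\,\omega^{\ast}_{i}}^{2}$. Finally, the same finite-overlap argument applied to the family $\{\omega^{\ast}_{i}\}$ with constant $\kappa^{\ast}$ yields $\sum_{i}\Vert u_{0}\Vert_{a,\,\omega^{\ast}_{i}}^{2}\leq \kappa^{\ast}\Vert u_{0}\Vert_{a}^{2}$, by summing the characteristic-function inequality $\sum_{i}\mathbf{1}_{\omega^{\ast}_{i}}\leq \kappa^{\ast}$ against the nonnegative density $A\nabla u_{0}\cdot\nabla u_{0}$. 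Combining the three estimates and taking a square root produces \cref{eq:1-3-2}. There is no genuine obstacle here; the only point worth stating cleanly is the pointwise Cauchy--Schwarz bound for a sum of vectors with at most $\kappa$ nonzero terms in the $A({\bm x})$-inner product, which is what forces the factor $\sqrt{\kappa}$ and, analogously, the factor $\sqrt{\kappa^{\ast}}$.
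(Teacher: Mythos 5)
Your proposal is correct and follows essentially the same route as the paper: write $u_0-\Psi=\sum_i\chi_i(u_0-u_i^p-\phi_i)$ via the partition-of-unity identity, apply the finite-overlap bound with constant $\kappa$, insert the local hypothesis, and apply the finite-overlap bound with constant $\kappa^{\ast}$. You simply spell out the pointwise Cauchy--Schwarz justification of the overlap inequality that the paper takes for granted.
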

\begin{proof}
It is easy to show that $\Psi\in H^{1}_{qD}(\Omega)$. Moreover, using \cref{eq:1-3-1}, we have
\begin{equation}\label{eq:1-3-3}
\begin{array}{lll}
{\displaystyle \big\Vert u_{0} - \Psi \big\Vert^{2}_{a} = \Big\Vert \sum_{i=1}^{M}\chi_{i}(u_{0}-u^{p}_{i}-\phi_{i})\Big\Vert^{2}_{a}\leq \kappa \sum_{i=1}^{M}\big\Vert \chi_{i}(u_{0}-u^{p}_{i}-\phi_{i})\big\Vert^{2}_{a,\,\omega_{i}} }\\[3mm]
{\displaystyle \; \leq \kappa\sum_{i=1}^{M}\varepsilon_{i}^{2}\Vert u_{0}\Vert^{2}_{a,\,\omega^{\ast}_{i}} \leq \kappa \big(\max_{i=1,\cdots,M}\varepsilon_{i}^{2}\big)\sum_{i=1}^{M}\Vert u_{0}\Vert^{2}_{a,\,\omega^{\ast}_{i}} \leq \kappa\kappa^{\ast}\big(\max_{i=1,\cdots,M}\varepsilon_{i}^{2}\big)\Vert u_{0}\Vert^{2}_{a}, }
\end{array}
\end{equation}
which gives \cref{eq:1-3-2}. \end{proof}
It follows from \cref{eq:1-2-4} and \cref{thm:1-0} that the error of the Galerkin approximate solution $u^{G}$ is bounded by
\begin{equation}\label{eq:1-3-4}
\displaystyle \big\Vert u_{0} - u^{G} \big\Vert_{a} \leq  \big\Vert u_{0} - \Psi\big\Vert_{a}\leq \sqrt{\kappa \kappa^{\ast}}\big(\max_{i=1,\cdots,M}\varepsilon_{i}\big)\Vert u_{0}\Vert_{a}.
\end{equation}
We see that the global error of the GFEM is determined by the local approximation errors. 

In next section, we will give the local particular functions and the optimal local approximation spaces on each subdomain (\cref{thm:1-3-0,thm:3-1-0}) and derive upper bounds for the local approximation errors (\cref{thm:1-4,thm:3-2}).
\section{Local particular functions and optimal local approximation spaces}\label{sec-3}
In this section, we introduce the local particular functions and the optimal local approximation spaces for the GFEM and establish upper bounds for the local approximation errors. As in \cite{babuska2011optimal}, we decompose the solution restricted to each subdomain into two orthogonal parts. The first part satisfies the original elliptic equation locally with artificial boundary conditions on the interior boundaries, defined as the local particular function. The second part is locally $A$-harmonic. Its approximation is the key task of the MS-GFEM. We construct an optimal approximation space for the $A$-harmonic part by formulating the problem as the Kolmogorov $n$-width of a compact operator associated with the partition of unity. Due to slightly different definitions of the local particular functions and the $A$-harmonic spaces and due to some technical difficulties in the proof of the nearly exponential decay for boundary subdomains, we deal with interior subdomains and with subdomains that intersect the outer boundary separately. 
\subsection{Local approximation in interior subdomains}
In this subsection, we give the local particular function and the optimal local approximation space for a subdomain $\omega_{i}$ that lies within the interior of $\Omega$. To this end, we introduce another domain $\omega_{i}^{\ast}$ that satisfies $\omega_{i} \subseteq \omega_{i}^{\ast}\subset\Omega$ and define $\psi_{i}\in H_{0}^{1}(\omega^{\ast}_{i})$ to be the solution of
\begin{equation}\label{eq:1-4}
\left\{
\begin{array}{lll}
{\displaystyle -{\rm div} (A({\bm x})\nabla \psi_{i}({\bm x})) = f({\bm x}), \quad {\rm in }\;\,\omega^{\ast}_{i},   }\\[2mm]
{\displaystyle \psi_{i}({\bm x}) = 0,\qquad \qquad \qquad \qquad\;\; {\rm on}\;\,\partial \omega^{\ast}_{i} .}
\end{array}
\right.
\end{equation}
Next we introduce the spaces of functions that are $A$-harmonic on $\omega_{i}^{\ast}$ as follows.
\begin{equation}\label{eq:1-4-0}
\begin{array}{lll}
{\displaystyle H_{A}(\omega^{\ast}_{i}) = \big\{ v\in H^{1}(\omega^{\ast}_{i})\,:\, a_{\omega^{\ast}_{i}}(v, \varphi) = 0 \;\;\forall \varphi \in H_{0}^{1}(\omega^{\ast}_{i})\big\},}\\[2mm]
{\displaystyle H_{A,0}(\omega^{\ast}_{i}) = \big\{ v \in H_{A}(\omega^{\ast}_{i}) \,:\, \mathcal{M}_{\omega_{i}}(v) = 0\big\},}
\end{array}
\end{equation}
where 
\begin{equation}
\mathcal{M}_{\omega_{i}}(v) = \frac{\textstyle \int_{\omega_{i}} A\nabla(\chi_{i} v) \cdot\nabla \chi_{i}\,d{\bm x}} {\textstyle \int_{\omega_{i}} A\nabla\chi_{i} \cdot\nabla \chi_{i}\,d{\bm x}}
\end{equation}
with $\chi_{i}$ being the partition of unity function supported on $\omega_{i}$. It can be shown that $\Vert\cdot\Vert_{a,\omega_{i}^{\ast}}$ is a norm on $H_{A,0}(\omega^{\ast}_{i})$. From the definition of $\psi_{i}$, we see that $u_{0}|_{\omega^{\ast}_{i}} - \psi_{i}$ and $u_{0}|_{\omega^{\ast}_{i}} - \psi_{i} - \mathcal{M}_{\omega_{i}}(u_{0}|_{\omega^{\ast}_{i}} - \psi_{i})$ belong to $H_{A}(\omega^{\ast}_{i})$ and $H_{A,0}(\omega^{\ast}_{i})$, respectively, where $u_{0}$ is the solution of \cref{eq:1-2}.

Before giving the optimal approximation space, we prove an interesting identity for functions in the $A$-harmonic space which yields a Caccioppoli-type inequality \cite{babuska2011optimal} straightforwardly.
\cm{
\begin{lemma}\label{lem:1-1}
Assume that $\eta\in W^{1,\infty}(\omega_{i}^{\ast}) \cap H_{0}^{1}(\omega_{i}^{\ast})$ and $u, \,v\in H_{A}(\omega_{i}^{\ast})$. Then,  
\begin{equation}\label{eq:1-4-1}
a_{\omega_{i}^{\ast}}(\eta u,\eta v) = \int_{\omega_{i}^{\ast}} A \nabla (\eta u)\cdot \nabla (\eta v)\,d{\bm x} = \int_{\omega_{i}^{\ast}}(A\nabla \eta \cdot \nabla \eta) uv\,d{\bm x}.
\end{equation}
In particular, 
\begin{equation}\label{eq:1-4-2}
\Vert \eta u \Vert_{a, \omega_{i}^{\ast}} \leq \beta^{\frac12} \Vert \nabla \eta \Vert_{L^{\infty}(\omega_{i}^{\ast})} \Vert u \Vert_{L^{2}(\omega_{i}^{\ast})},
\end{equation}
where $\beta$ is defined in \cref{eq:1-1-0}.
\end{lemma}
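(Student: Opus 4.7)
The plan is to establish the identity \cref{eq:1-4-1} by expanding $\nabla(\eta u)$ and $\nabla(\eta v)$ with the product rule and then showing that all cross terms cancel by exploiting the $A$-harmonicity of $u$ and $v$ through judicious test functions. Concretely, a direct expansion gives
\begin{equation*}
a_{\omega_i^*}(\eta u,\eta v)=\int_{\omega_i^*}\eta^2 A\nabla u\cdot\nabla v\,d{\bm x}+\int_{\omega_i^*}\eta v\,A\nabla u\cdot\nabla\eta\,d{\bm x}+\int_{\omega_i^*}\eta u\,A\nabla\eta\cdot\nabla v\,d{\bm x}+\int_{\omega_i^*}uv\,A\nabla\eta\cdot\nabla\eta\,d{\bm x},
\end{equation*}
so the task reduces to showing that the sum of the first three integrals vanishes.

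The key step is to use $\eta^2 v$ and $\eta^2 u$ as test functions in the weak formulation defining $H_A(\omega_i^*)$. Since $\eta\in W^{1,\infty}(\omega_i^*)\cap H_0^1(\omega_i^*)$, the product $\eta^2$ lies in $W^{1,\infty}(\omega_i^*)\cap H_0^1(\omega_i^*)$ as well, so $\eta^2 u$ and $\eta^2 v$ belong to $H_0^1(\omega_i^*)$ and are admissible test functions. Applying the $A$-harmonicity of $u$ with test function $\eta^2 v$ and expanding $\nabla(\eta^2 v)=\eta^2\nabla v+2\eta v\nabla\eta$ yields
\begin{equation*}
\int_{\omega_i^*}\eta^2 A\nabla u\cdot\nabla v\,d{\bm x}=-2\int_{\omega_i^*}\eta v\,A\nabla u\cdot\nabla\eta\,d{\bm x},
\end{equation*}
and the analogous identity with the roles of $u$ and $v$ swapped gives
\begin{equation*}
\int_{\omega_i^*}\eta^2 A\nabla u\cdot\nabla v\,d{\bm x}=-2\int_{\omega_i^*}\eta u\,A\nabla v\cdot\nabla\eta\,d{\bm x},
\end{equation*}
where I use the symmetry of $A$. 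Averaging these two expressions and substituting back cancels the three mixed terms exactly, leaving only $\int_{\omega_i^*}uv\,A\nabla\eta\cdot\nabla\eta\,d{\bm x}$, which is \cref{eq:1-4-1}.

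Finally, \cref{eq:1-4-2} follows by specializing to $u=v$: the identity gives $\|\eta u\|_{a,\omega_i^*}^2=\int_{\omega_i^*}u^2\,A\nabla\eta\cdot\nabla\eta\,d{\bm x}$, and the upper bound in \cref{eq:1-1-0} together with $|\nabla\eta|\le\|\nabla\eta\|_{L^\infty(\omega_i^*)}$ yields the stated estimate after taking a square root. The only technical subtlety is verifying the admissibility of $\eta^2 v,\eta^2 u$ as elements of $H_0^1(\omega_i^*)$; this is where the $W^{1,\infty}$ regularity of $\eta$ (and the fact that it has zero trace) is used to ensure that products with $H^1$ functions remain in $H_0^1$, and it is the step that needs to be stated carefully but is otherwise routine.
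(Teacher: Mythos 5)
Your proof is correct and follows essentially the same route as the paper's: expand $a_{\omega_i^*}(\eta u,\eta v)$ by the product rule, exploit the $A$-harmonicity of $u$ and $v$ through the admissible test functions $\eta^2 v$ and $\eta^2 u$, and use the symmetry of $A$ to cancel the cross terms. The paper reaches the same cancellation by writing the expansion with the term $\int A\nabla u\cdot\nabla(\eta^2 v)$ built in, killing it, and then symmetrizing by swapping $u\leftrightarrow v$ and adding — a cosmetic difference from your more direct averaging of the two identities.
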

\begin{proof}
A direct calculation gives 
\begin{equation}\label{eq:1-4-3}
\begin{array}{lll}
{\displaystyle a_{\omega_{i}^{\ast}}(\eta u,\eta v) = \int_{\omega_{i}^{\ast}}(A\nabla \eta \cdot \nabla \eta) uv\,d{\bm x} - \int_{\omega_{i}^{\ast}}(A\nabla u \cdot \nabla \eta) \eta v\,d{\bm x}}\\[4mm]
{\displaystyle \qquad \qquad +\,\int_{\omega_{i}^{\ast}}(A\nabla \eta \cdot \nabla v) \eta u\,d{\bm x} + \int_{\omega_{i}^{\ast}}A\nabla u \cdot \nabla (\eta^{2}v)\,d{\bm x}.}
\end{array}
\end{equation}
Since $\eta\in W^{1,\infty}(\omega_{i}^{\ast}) \cap H_{0}^{1}(\omega_{i}^{\ast})$ and $u,\,v\in H_{A}(\omega_{i}^{\ast})$, we have $\eta^{2}v\in H_{0}^{1}(\omega_{i}^{\ast})$ and thus 
\begin{equation}\label{eq:1-4-4}
\int_{\omega_{i}^{\ast}} A\nabla u\cdot\nabla (\eta^{2}v)d{\bm x} = 0.
\end{equation}
Therefore, the last term on the right-hand side of \cref{eq:1-4-3} vanishes and we get
\begin{equation}\label{eq:1-4-7}
a_{\omega_{i}^{\ast}}(\eta u,\eta v) = \int_{\omega_{i}^{\ast}}(A\nabla \eta \cdot \nabla \eta) uv\,d{\bm x} - \int_{\omega_{i}^{\ast}}(A\nabla u \cdot \nabla \eta) \eta v\,d{\bm x}
+\int_{\omega_{i}^{\ast}}(A\nabla \eta \cdot \nabla v) \eta u\,d{\bm x}.
\end{equation}
Exchanging $u$ and $v$, it follows that
\begin{equation}\label{eq:1-4-8}
a_{\omega_{i}^{\ast}}(\eta v,\eta u) = \int_{\omega_{i}^{\ast}}(A\nabla \eta \cdot \nabla \eta) uv\,d{\bm x} - \int_{\omega_{i}^{\ast}}(A\nabla v \cdot \nabla \eta) \eta u\,d{\bm x}
+\int_{\omega_{i}^{\ast}}(A\nabla \eta \cdot \nabla u) \eta v\,d{\bm x}.
\end{equation}
Now adding \cref{eq:1-4-7,eq:1-4-8} together and using the symmetry of the matrix $A$, we obtain \cref{eq:1-4-1}. \Cref{eq:1-4-2} follows immediately by taking $u=v$ in \cref{eq:1-4-1} and using \cref{eq:1-1-0}.
\end{proof}
}
In order to find the optimal approximation space for approximating a function in $H_{A,0}(\omega^{\ast}_{i})$ multiplied by the partition of unity function $\chi_{i}$, we first introduce an operator $P:H_{A,0}(\omega_{i}^{\ast})\rightarrow H_{0}^{1}(\omega_{i})$ such that $P(v)({\bm x}) = \chi_{i}({\bm x}) v({\bm x})$ for all ${\bm x}\in\omega_{i}$ and $v\in H_{A,0}(\omega_{i}^{\ast})$. Since $H^{1}(\omega_{i}^{\ast})$ is compactly embedded in $L^{2}(\omega_{i}^{\ast})$, using \cref{lem:1-1}, we have immediately that $P$ is a compact operator from $H_{A,0}(\omega_{i}^{\ast})$ into $H_{0}^{1}(\omega_{i})$. Next we consider the approximation of the set $P(H_{A,0}(\omega_{i}^{\ast}))$ in $H_{0}^{1}(\omega_{i})$ by subspaces $Q(n)$ of dimension $n$ with accuracy measured by
\begin{equation}\label{eq:1-4-6}
d(Q(n),\omega_{i}) = \sup_{u\in H_{A,0}(\omega_{i}^{\ast})} \inf_{v\in Q(n)}\frac {\Vert Pu-v\Vert_{a,\omega_{i}}}{\Vert u \Vert_{a,\omega_{i}^{\ast}}}.
\end{equation}
For each $n=1,2,\cdots$, the approximation space $\hat{Q}(n)\subset H_{0}^{1}(\omega_{i})$ is said to be optimal if it satisfies $d(\hat{Q}(n),\omega_{i}) \leq d(Q(n),\omega_{i})$ for any other $n$-dimensional space $Q(n)\subset H_{0}^{1}(\omega_{i})$. For $n=1,2,\cdots$, the problem of finding an optimal approximation space is formulated as follows. As in \cite{pinkus1985n}, the Kolmogorov $n$-width $d_{n}(\omega_{i},\omega_{i}^{\ast})$ of the compact operator $P$ is defined as
\begin{equation}\label{eq:1-4-5}
d_{n}(\omega_{i},\omega_{i}^{\ast}) = \inf_{Q(n)\subset H_{0}^{1}(\omega_{i})}d(Q(n),\omega_{i})=\inf_{Q(n)\subset H_{0}^{1}(\omega_{i})}\sup_{u\in H_{A,0}(\omega_{i}^{\ast})} \inf_{v\in Q(n)}\frac {\Vert Pu-v\Vert_{a,\omega_{i}}}{\Vert u \Vert_{a,\omega_{i}^{\ast}}}.
\end{equation}
Then the optimal approximation space $\hat{Q}(n)$ satisfies
\begin{equation}
d_{n}(\omega_{i},\omega_{i}^{\ast}) =\sup_{u\in H_{A,0}(\omega_{i}^{\ast})} \inf_{v\in \hat{Q}(n)}\frac {\Vert Pu-v\Vert_{a,\omega_{i}}}{\Vert u \Vert_{a,\omega_{i}^{\ast}}}.
\end{equation}
The $n$-width $d_{n}(\omega_{i},\omega_{i}^{\ast})$ can be characterized via the singular values and singular vectors of the compact operator $P$ as follows.
\begin{theorem}\label{thm:1-3}
For each $k\in\mathbb{N}$, let $\lambda_{k}$ and $v_{k}$ be the $k$-th  eigenvalue (arranged in increasing order) and the associated eigenfunction of the following problem 
\begin{equation}\label{eq:1-5}
a_{\omega_{i}^{\ast}}(v, \varphi) = \lambda\,a_{\omega_{i}}(\chi_{i} v, \chi_{i} \varphi) \cm{= \lambda\int_{\omega_i}(A\nabla \chi_{i} \cdot \nabla \chi_{i}) v\varphi\,d{\bm x}},\quad \forall \varphi\in H_{A,0}(\omega_{i}^{\ast}).
\end{equation}
Then the $n$-width of the compact operator $P$ satisfies $d_{n}(\omega_{i},\omega_{i}^{\ast}) =\lambda^{-1/2}_{n+1}$ and the associated optimal approximation space is given by 
\begin{equation}
\hat{Q}(n) = {\rm span}\{\chi_{i} v_{1}, \cdots, \chi_{i} v_{n}\}.
\end{equation}
\end{theorem}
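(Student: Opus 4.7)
The plan is to recognize the statement as a standard Kolmogorov $n$-width result for a compact operator between Hilbert spaces, and to translate it into the singular value decomposition of $P$. The two relevant Hilbert spaces are $H_{A,0}(\omega_i^*)$ equipped with the inner product $a_{\omega_i^*}(\cdot,\cdot)$ (which is an inner product by the remark in the text that $\|\cdot\|_{a,\omega_i^*}$ is a norm on this space), and $H_0^1(\omega_i)$ equipped with $a_{\omega_i}(\cdot,\cdot)$ (which is an inner product by the Poincar\'e inequality together with the ellipticity of $A$). The operator $P: H_{A,0}(\omega_i^*)\to H_0^1(\omega_i)$, $v\mapsto \chi_i v$, has already been shown to be compact via \cref{lem:1-1} together with the compact embedding $H^1(\omega_i^*)\hookrightarrow L^2(\omega_i^*)$.

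The next step is to identify the eigenvalue problem \cref{eq:1-5} with a singular value problem for $P$. I would compute the Hilbert adjoint $P^*: H_0^1(\omega_i)\to H_{A,0}(\omega_i^*)$, characterized by $a_{\omega_i^*}(P^*w,v) = a_{\omega_i}(w,Pv)$ for all $v\in H_{A,0}(\omega_i^*)$ and $w\in H_0^1(\omega_i)$, and then set $T = P^*P$. By definition, $a_{\omega_i^*}(Tv,\varphi) = a_{\omega_i}(\chi_i v,\chi_i \varphi)$ for all $v,\varphi\in H_{A,0}(\omega_i^*)$, so the eigenvalue problem \cref{eq:1-5} is exactly $Tv = \mu v$ with $\mu = 1/\lambda$. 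Because $T$ is compact, self-adjoint, and non-negative on $H_{A,0}(\omega_i^*)$, the spectral theorem yields a countable orthonormal basis of eigenfunctions $\{v_k\}$ whose eigenvalues $\mu_k$ form a non-increasing null sequence, equivalently $0<\lambda_1\le\lambda_2\le\cdots\to\infty$ (eigenvalues $\mu_k = 0$ correspond to $\ker P$ and can be assigned $\lambda=\infty$ without affecting the statement).

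Finally I would invoke the classical characterization of the Kolmogorov $n$-width of a compact operator between Hilbert spaces (see \cite{pinkus1985n}): the $n$-width of $P(B)$, where $B$ is the unit ball in $H_{A,0}(\omega_i^*)$, equals the $(n+1)$-th singular value $\sigma_{n+1}(P) = \sqrt{\mu_{n+1}} = \lambda_{n+1}^{-1/2}$, and an optimal $n$-dimensional approximating subspace is the span of the first $n$ left singular vectors, i.e.\ the span of $\{Pv_1,\dots,Pv_n\} = \{\chi_i v_1,\dots,\chi_i v_n\}$. The lower bound (optimality over all $n$-dimensional subspaces) is the nontrivial direction and would be proved by the standard argument: given any candidate $Q(n)\subset H_0^1(\omega_i)$ with $\dim Q(n)=n$, one exhibits a unit vector $u\in\mathrm{span}\{v_1,\dots,v_{n+1}\}$ with $Pu\perp Q(n)$ in $H_0^1(\omega_i)$ (a dimension count guarantees its existence), and estimates $\|Pu\|_{a,\omega_i}^2 \ge \mu_{n+1}$ by orthogonality of the $v_k$ in both inner products. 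The main obstacle is bookkeeping the two inner products and the possible nontrivial kernel of $P$; once the adjoint identity is set up cleanly, everything reduces to the textbook SVD argument.
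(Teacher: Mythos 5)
Your proposal is correct and follows essentially the same route as the paper: identify the adjoint $P^{*}$ with respect to the two energy inner products, rewrite the eigenvalue problem \cref{eq:1-5} as the spectral problem for $P^{*}P$ (using \cref{lem:1-1} to obtain the $\int_{\omega_i}(A\nabla\chi_i\cdot\nabla\chi_i)v\varphi$ form), and invoke the Pinkus characterization of the Kolmogorov $n$-width via singular values and left singular vectors. The only difference is that you sketch the lower-bound argument rather than citing it, and you flag the kernel-of-$P$ edge case; both points are consistent with the paper's reasoning.
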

\begin{proof}
Let $P^{\ast}: H_{0}^{1}(\omega_{i})\rightarrow H_{A,0}(\omega_{i}^{\ast})$ be the adjoint of the operator $P$. We denote by $\{\mu_{k}\}_{k=1}^{\infty}$, $\{v_{k}\}_{k=1}^{\infty}$, and $\{u_{k}\}_{k=1}^{\infty}$ the singular values and the right and left singular vectors of the compact operator $P$, respectively. Here $\{v_{k}\}$ are the orthonormal eigenvectors of $P^{\ast}P$ associated with the eigenvalues $\{\mu^{2}_{k}\}$, i.e.,
\begin{equation}\label{eq:1-5-0}
P^{\ast}Pv_{k} = \mu_{k}^{2} v_{k}
\end{equation} 
and $u_{k} = \mu_{k}^{-1}Pv_{k}$ for $k\in\mathbb{N}$. By \cite[Theorem 2.5]{pinkus1985n}, the $n$-width $d_{n}(\omega_{i},\omega_{i}^{\ast}) = \mu_{n+1}$ and the optimal approximation space is spanned by the left singular vectors, i.e., $\hat{Q}(n) = {\rm span} \{u_{1}, u_{2},\cdots,u_{n}\}$. Let $\lambda_{k} = \mu_{k}^{-2}$ for $k\in\mathbb{N}$. Then $d_{n}(\omega_{i},\omega_{i}^{\ast}) = \lambda^{-1/2}_{n+1}$ and the eigenvalue problem \cref{eq:1-5-0} can be written as the following variational formulation:
\begin{equation}\label{eq:1-5-1}
\begin{array}{lll}
{\displaystyle a_{\omega_{i}^{\ast}}(v_{k},\varphi) = \lambda_{k} \,a_{\omega_{i}^{\ast}}(P^{\ast}Pv_{k},\varphi) =\lambda_{k}\, a_{\omega_{i}}(Pv_{k},P\varphi)}\\[3mm]
{\displaystyle \quad  = \lambda_{k} \,a_{\omega_{i}}(\chi_{i} v_{k}, \chi_{i}\varphi)=\lambda_{k}\int_{\omega_i}(A\nabla \chi_{i} \cdot \nabla \chi_{i}) v_{k}\varphi\,d{\bm x}, \quad \forall \varphi \in H_{A,0}(\omega_{i}^{\ast}), }
\end{array}
\end{equation}
where we have used \cref{eq:1-4-1} in the last equality. We complete the proof by noting that $\hat{Q}(n) = {\rm span} \{u_{k}\}_{k=1}^{n} = {\rm span}\{\chi_{i} v_{k}\}_{k=1}^{n}$.\qquad \end{proof}
\begin{rem}
Note that thus
\vspace{-3mm}
\begin{equation}
Pu = \sum_{k=1}^{\infty}\mu_{k}a_{\omega_{i}^{\ast}}(u,v_{k})\,u_{k}
\vspace{-2mm}
\end{equation}
constitutes the singular value decomposition of the partition of unity operator $Pu = \chi_{i}u$ in the $a_{\omega_{i}^{\ast}}(\cdot,\cdot)$ inner product.
\end{rem}

With the above characterization of the $n$-width at hand, we are ready to define the optimal local approximation space on $\omega_{i}$ for the GFEM and give the local approximation error. By defining the span of the right singular vectors of the partition of unity function augmented with the space of constant functions as the local approximation space, we find the local approximation error is naturally bounded by the $n$-width.
\begin{theorem}\label{thm:1-3-0}
Let the local particular function and the optimal local approximation space on $\omega_{i}$ for the GFEM be defined as
\begin{equation}\label{eq:1-5-2}
u^{p}_{i}:=\psi_{i}|_{\omega_{i}}\;\;\; {\rm and}\;\;\; S_{n}(\omega_{i}) := {\rm span}\{v_{1}|_{\omega_{i}},\cdots, v_{n}|_{\omega_{i}}\},
\end{equation}
where $\psi_{i}$ is defined in \cref{eq:1-4} and $v_{k}$ denotes the $k$-th eigenfunction of the eigenproblem 
\begin{equation}\label{eq:1-5-3}
a_{\omega_{i}^{\ast}}(v, \varphi) = \lambda\,a_{\omega_{i}}(\chi_{i} v, \chi_{i} \varphi)\cm{=\lambda\int_{\omega_i}(A\nabla \chi_{i} \cdot \nabla \chi_{i}) v\varphi\,d{\bm x}},\quad \forall \varphi\in H_{A}(\omega_{i}^{\ast}),
\end{equation}
and let $u_{0}$ be the solution of (\ref{eq:1-2}). Then, there exists a $\phi_{i}\in S_{n}(\omega_{i})$ such that
\begin{equation}\label{eq:1-5-4}
\Vert \chi_{i}(u_{0} - u^{p}_{i} - \phi_{i})\Vert_{a,\omega_{i}}\leq d_{n-1}(\omega_{i},\omega_{i}^{\ast})\,\Vert u_{0}\Vert_{a,\omega_{i}^{\ast}}.
\end{equation}
\end{theorem}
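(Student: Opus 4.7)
The strategy is to reduce the estimate to \cref{thm:1-3} once the spectrum of the enlarged eigenvalue problem \cref{eq:1-5-3} has been identified. Setting $u := u_0|_{\omega_i^*} - \psi_i$, the fact that $u_0$ and $\psi_i$ satisfy the same PDE in $\omega_i^*$ yields $u \in H_A(\omega_i^*)$, and the orthogonality $a_{\omega_i^*}(u,\psi_i) = 0$ between $H_A(\omega_i^*)$ and $H_0^1(\omega_i^*)$ gives the Pythagorean identity $\|u_0\|_{a,\omega_i^*}^2 = \|u\|_{a,\omega_i^*}^2 + \|\psi_i\|_{a,\omega_i^*}^2$, so in particular $\|u\|_{a,\omega_i^*} \leq \|u_0\|_{a,\omega_i^*}$. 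Since $u^p_i = \psi_i|_{\omega_i}$, it therefore suffices to exhibit $\phi_i \in S_n(\omega_i)$ with $\|\chi_i(u - \phi_i)\|_{a,\omega_i} \leq d_{n-1}(\omega_i,\omega_i^*)\,\|u\|_{a,\omega_i^*}$.

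Next I would verify that the spectrum of \cref{eq:1-5-3} is the spectrum of \cref{eq:1-5} augmented with a zero eigenvalue carrying a constant eigenfunction. Since $1 \in H_A(\omega_i^*)$ satisfies \cref{eq:1-5-3} trivially with $\lambda = 0$, take $v_1 \equiv 1$ and $\lambda_1 = 0$. Applying \cref{lem:1-1} with $\eta = \chi_i$ (well-defined in $W^{1,\infty}(\omega_i^*) \cap H_0^1(\omega_i^*)$ because $\chi_i$ vanishes outside $\omega_i \subsetneq \omega_i^*$) and the constant test function $v \equiv 1$ produces the identity
\begin{equation*}
\int_{\omega_i} A \nabla(\chi_i u)\cdot \nabla \chi_i\, d{\bm x} = \int_{\omega_i}(A\nabla\chi_i\cdot\nabla\chi_i)\, u\, d{\bm x}, \qquad u \in H_A(\omega_i^*),
\end{equation*}
which shows that $\mathcal{M}_{\omega_i}(u)$ is nothing but the $(A\nabla\chi_i \cdot \nabla\chi_i)\,d{\bm x}$-weighted mean of $u$ on $\omega_i$. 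Consequently the spectral orthogonality $\int_{\omega_i}(A\nabla\chi_i\cdot\nabla\chi_i)\,v_k\, d{\bm x} = 0$ for $k \geq 2$ is exactly $\mathcal{M}_{\omega_i}(v_k) = 0$, so $v_k \in H_{A,0}(\omega_i^*)$ for every $k \geq 2$. Restricting the test space in \cref{eq:1-5-3} from $H_A(\omega_i^*)$ to $H_{A,0}(\omega_i^*)$ shows that $\{(\lambda_{k+1}, v_{k+1})\}_{k \geq 1}$ are eigenpairs of \cref{eq:1-5}, while splitting an arbitrary $\varphi \in H_A(\omega_i^*)$ as $\mathcal{M}_{\omega_i}(\varphi)\cdot 1 + \varphi_0$ with $\varphi_0 \in H_{A,0}(\omega_i^*)$ yields the converse. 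Hence $\mathrm{span}\{\chi_i v_2|_{\omega_i},\dots,\chi_i v_n|_{\omega_i}\}$ is exactly the $(n-1)$-dimensional optimal approximation space produced by \cref{thm:1-3}.

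The remaining step is essentially bookkeeping. Write $u = c + w$ with $c := \mathcal{M}_{\omega_i}(u)$ and $w := u - c \in H_{A,0}(\omega_i^*)$; since constants have zero energy, $\|w\|_{a,\omega_i^*} = \|u\|_{a,\omega_i^*}$. By \cref{thm:1-3} applied with $n-1$ in place of $n$, there exist coefficients $\alpha_2,\ldots,\alpha_n$ satisfying
\begin{equation*}
\Big\|\chi_i w - \sum_{k=2}^n \alpha_k\,\chi_i v_k\Big\|_{a,\omega_i} \leq d_{n-1}(\omega_i,\omega_i^*)\,\|w\|_{a,\omega_i^*}.
\end{equation*}
Setting $\phi_i := c\,v_1|_{\omega_i} + \sum_{k=2}^n \alpha_k v_k|_{\omega_i} \in S_n(\omega_i)$ and using $v_1 \equiv 1$ on $\omega_i$ gives $\chi_i(u - \phi_i) = \chi_i w - \sum_{k=2}^n \alpha_k \chi_i v_k$ on $\omega_i$; combining with $\|w\|_{a,\omega_i^*} \leq \|u_0\|_{a,\omega_i^*}$ proves \cref{eq:1-5-4}. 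The main obstacle I anticipate is precisely the spectral identification in the middle paragraph: showing rigorously that the constant eigenfunction absorbs the mean constraint defining $H_{A,0}(\omega_i^*)$, so that the $n$-dimensional optimal space on $H_A(\omega_i^*)$ delivers the same error as the $(n-1)$-dimensional optimal space on $H_{A,0}(\omega_i^*)$. Once this is in hand, the rest is a direct application of \cref{thm:1-3}.
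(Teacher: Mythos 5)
Your proposal is correct and follows essentially the same path as the paper's proof: decompose $H_A(\omega_i^*) = \mathbb{R} \oplus H_{A,0}(\omega_i^*)$, observe that the eigenproblem \cref{eq:1-5-3} decouples so that $S_n(\omega_i) = \mathbb{R} \oplus V_{n-1}|_{\omega_i}$ with $V_{n-1}$ spanned by the first $n-1$ eigenfunctions of \cref{eq:1-5}, write $u_0 - \psi_i = \mathcal{M}_{\omega_i}(u_0-\psi_i) + w$ with $w \in H_{A,0}(\omega_i^*)$, apply \cref{thm:1-3}, and close with $\|u_0 - \psi_i\|_{a,\omega_i^*} \le \|u_0\|_{a,\omega_i^*}$. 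The extra care you devote to the spectral identification (verifying that $\mathcal{M}_{\omega_i}$ is the weighted mean via \cref{lem:1-1} and checking both inclusions between the spectra) is more explicit than the paper's one-line appeal to the decoupling, but it is the same argument, not a different route.
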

\begin{proof}
Note that the eigenvalue problem \cref{eq:1-5-3} is posed over $H_{A}(\omega_{i}^{\ast})$ instead of $H_{A,0}(\omega_{i}^{\ast})$. First we carry out a decomposition of the local approximation space $S_{n}(\omega_{i})$. In fact, denoting by $\mathbb{R}$ the space of constant functions and recalling the definition of $H_{A,0}(\omega_{i}^{\ast})$, we observe that $H_{A}(\omega_{i}^{\ast}) = \mathbb{R} \oplus H_{A,0}(\omega_{i}^{\ast})$ and $a_{\omega_{i}^{\ast}}(v, \varphi) = a_{\omega_{i}}(\chi_{i} v, \chi_{i} \varphi) = 0$ for all $v\in \mathbb{R}$ and $\varphi \in  H_{A,0}(\omega_{i}^{\ast})$. Hence, the eigenproblem \cref{eq:1-5-3} can be decoupled into two eigenproblems: one on $\mathbb{R}$ with eigenvalue 0 and another on $H_{A,0}(\omega_{i}^{\ast})$, i.e., \cref{eq:1-5}, with positive eigenvalues. It follows that $S_{n}(\omega_{i})$ can be decomposed as
\begin{equation}\label{eq:1-5-4-1}
S_{n}(\omega_{i}) = \mathbb{R}\oplus V_{n-1}\big|_{\omega_{i}},
\end{equation}
where $V_{n-1}$ is the space spanned by the first $n-1$ eigenfunctions of \cref{eq:1-5}. 

To prove \cref{eq:1-5-4}, we first deduce from the weak formulation of \cref{eq:1-4} that 
\begin{equation}\label{eq:1-5-4-2}
a_{\omega_{i}^{\ast}}(u_{0}-\psi_{i},\varphi) = 0,\quad \forall \varphi\in H_{0}^{1}(\omega^{\ast}_{i}).
\end{equation}
Hence we have $u_{0}-\psi_{i}\in H_{A}(\omega_{i}^{\ast})$ and $u_{0}-\psi_{i}- \mathcal{M}_{\omega_{i}}(u_{0} - \psi_{i}) \in H_{A,0}(\omega_{i}^{\ast})$. Keeping in mind the definition of $V_{n-1}$ above, it follows from \cref{thm:1-3} that there exists a $\theta_{i} \in V_{n-1}\big|_{\omega_{i}}$ such that
\begin{equation}\label{eq:1-5-4-3}
\Vert \chi_{i} \big(u_{0}-\psi_{i} - \mathcal{M}_{\omega_{i}}(u_{0} - {\psi}_{i}) - \theta_{i}\big)\Vert_{a,\omega_{i}} \leq \Vert u_{0}-\psi_{i} \Vert_{a,\,\omega_{i}^{\ast}}\,d_{n-1}(\omega_{i},\omega_{i}^{\ast}).
\end{equation}
In view of \cref{eq:1-5-4-2}, we further have $\Vert u_{0}-\psi_{i} \Vert_{a,\,\omega_{i}^{\ast}} \leq \Vert u_{0}\Vert_{a,\omega_{i}^{\ast}}$. Consequently, 
\begin{equation}\label{eq:1-5-4-4}
\Vert \chi_{i} \big(u_{0}-\psi_{i} - \mathcal{M}_{\omega_{i}}(u_{0} - {\psi}_{i}) - \theta_{i}\big)\Vert_{a,\omega_{i}} \leq \Vert u_{0} \Vert_{a,\,\omega_{i}^{\ast}}\,d_{n-1}(\omega_{i},\omega_{i}^{\ast}).
\end{equation}
Define $\phi_{i}=\mathcal{M}_{\omega_{i}}(u_{0} - {\psi}_{i}) + \theta_{i}$. By \cref{eq:1-5-4-1}, we see that $\phi_{i}\in S_{n}(\omega_{i})$. The desired estimate \cref{eq:1-5-4} follows immediately from \cref{eq:1-5-4-4} and the definition of $u^{p}_{i}$.  \qquad \end{proof}
\begin{rem}\label{rem:3-1}
Note that \cref{thm:1-3,thm:1-3-0} hold for the case $\omega^{\ast}_{i}=\omega_{i}$. That is, our optimal local approximation space exists without oversampling. 
%The local approximation space $S_{n}(\omega_{i})$ for the GFEM and the optimal approximation space $\hat{Q}(n)$ of the $n$-width differ by a partition of unity function. In fact, the trial space in \cref{eq:1-3-0} for the GFEM is the sum of the optimal approximation spaces $\hat{Q}(n)$ on all the subdomains. The local approximation space $S_{n}(\omega_{i})$ is optimal in the sense that $\chi_{i}S_{n}(\omega_{i})$ is the optimal $n$-dimensional space in $H_{0}^{1}(\omega_{i})$ for approximating the function $\chi_{i}(u_{0} - u^{p}_{i})$ regardless of the constant.
\end{rem}

It remains to derive an upper bound of the local approximation error. According to \cref{thm:1-3-0}, the estimate of the local approximation error is equivalent to estimating the $n$-width $d_{n}(\omega_{i},\omega_{i}^{\ast})$. For simplicity, we assume that $\omega_{i}$ and $\omega_{i}^{\ast}$ are concentric cubes of side lengths $H_{i}$ and $H_{i}^{\ast}$ ($H_{i}^{\ast}>H_{i}$), respectively. Under this assumption, we obtain the decay rate of the $n$-width with respect to $n$ and the size of the oversampling domain as follows.
\cm{
\begin{theorem}\label{thm:1-4}
There exist $n_{0}>0$ and $b>0$, such that for any $n>n_{0}$,
\begin{equation}\label{eq:1-5-5}
d_{n}(\omega_{i},\omega_{i}^{\ast}) \leq C_{1}e^{-1}e^{-bn^{{1}/{(d+1)}}} e^{-h(\rho)bn^{{1}/{(d+1)}}},
\end{equation}
where $C_{1}$ is the constant given in \cref{eq:1-3}, $h(s) = 1+{s\log(s)}/{(1-s)}$, and $\rho = H_{i}/H_{i}^{\ast}$.
\end{theorem}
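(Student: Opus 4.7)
The plan is to bound the Kolmogorov width $d_n(\omega_i,\omega_i^*)$ from above by an explicit iterative construction of an $n$-dimensional approximating subspace of $H_0^1(\omega_i)$. The two workhorses are the Caccioppoli-type identity of Lemma \ref{lem:1-1}, which converts an $L^2$-bound for an $A$-harmonic function on a cube into an energy bound one ring inside, and a classical low-dimensional $L^2$-approximation of $H^1$-functions on cubes. Each iteration peels off one concentric ring, spends a fixed amount of dimension on that ring, and earns a small multiplicative factor; balancing the number of rings against the local approximation order produces the sub-exponent $n^{1/(d+1)}$, while the refined dependence on $\rho=H_i/H_i^*$ is captured by $h(\rho)$.

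Concretely, I would fix integers $m,p\ge 1$ to be optimized later and introduce concentric cubes $\omega_i=D_0\subset D_1\subset\cdots\subset D_m=\omega_i^*$ with side lengths $H_k=H_i+k(H_i^*-H_i)/m$, so each ring $D_k\setminus D_{k-1}$ has width $\delta=H_i^*(1-\rho)/(2m)$. For each $k$, choose a cut-off $\eta_k\in W^{1,\infty}_0(D_k)$ with $\eta_k\equiv 1$ on $D_{k-1}$ and $\|\nabla \eta_k\|_{L^\infty}\le C_1/\delta$, and let $V_k^N\subset L^2(D_k)$ be the space of piecewise constants on a uniform $p\times\cdots\times p$ subdivision of $D_k$, so that $N=p^d$ and $\inf_{v\in V_k^N}\|w-v\|_{L^2(D_k)}\le C(H_k/p)\|\nabla w\|_{L^2(D_k)}$ for every $w\in H^1(D_k)$.

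For the iteration, given $u\in H_{A,0}(\omega_i^*)$ I set $w_0=u$ and inductively at step $k=1,\dots,m$ define $\phi_k$ as the $A$-harmonic extension into $D_{m-k+1}$ of the boundary trace of the $L^2(D_{m-k+1})$-best approximation of $w_{k-1}$ from $V_{m-k+1}^N$, and put $w_k=w_{k-1}-\phi_k$; then $w_k$ remains $A$-harmonic on $D_{m-k+1}$. Applying Lemma \ref{lem:1-1} with cut-off $\eta_{m-k+1}$ and combining with the local $L^2$-approximation estimate produces a per-step contraction of order $\mu\sim 2C_1m/[p(1-\rho)]$ on the $L^2$-norm on the successively smaller cubes. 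A final application of Lemma \ref{lem:1-1} with $\eta=\chi_i$ (extended by zero to $\omega_i^*$) delivers
\begin{equation*}
\inf_{\phi\in\mathrm{span}\{\phi_1,\dots,\phi_m\}}\|\chi_i(u-\phi)\|_{a,\omega_i}\le C_1 e^{-1}\,\mu^{m}\,\|u\|_{a,\omega_i^*},
\end{equation*}
with an approximating subspace of total dimension $n\sim mp^d$.

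The main obstacle lies in the final optimization that must yield precisely the function $h(\rho)=1+\rho\log\rho/(1-\rho)$. Taking logarithms, one must minimize $m\log\mu=m[\log(2C_1m)-\log p-\log(1-\rho)]$ over integers $m,p$ subject to $mp^d=n$. At the critical point $m\sim p\sim n^{1/(d+1)}$ the $\rho$-independent terms produce the leading exponential $e^{-bn^{1/(d+1)}}$, while the $-m\log(1-\rho)$ piece contributes an additional $\rho$-dependent exponent. Identifying this extra exponent with $-h(\rho)bn^{1/(d+1)}$ is the delicate part; it falls out by treating $p$ as a continuous variable, solving the Lagrange condition $\log p=1+\log m-\log(1-\rho)$ that arises from differentiating the objective, substituting back into $m\log\mu$, and recognizing the combination $1+\rho\log\rho/(1-\rho)$ in the remaining logarithm. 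The endpoint checks $h(0)=1$ (maximal oversampling, which doubles the decay rate) and $h(1)=0$ (no oversampling, which recovers only the baseline rate $e^{-bn^{1/(d+1)}}$ of Remark \ref{rem:3-1}) then serve as internal consistency tests for the optimization.
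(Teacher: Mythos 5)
Your overall strategy is the same as the paper's: a nested family of concentric cubes, a cut-off per ring combined with the Caccioppoli identity of Lemma~\ref{lem:1-1}, a finite-dimensional $L^2$-approximation on each ring, and a balancing of the number of rings against the dimension spent per ring. The use of piecewise constants in place of the paper's spectral spaces $W_m(\omega^*)=\mathcal{P}^A\Psi_m(\omega^*)$ is a minor variation (you would need to argue that the $A$-harmonic extension of a piecewise-constant trace is well defined and stays in the relevant space, but that is a technicality).

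However, there is a genuine gap in how you propose to recover the factor $e^{-h(\rho)bn^{1/(d+1)}}$, and it cannot be repaired by the optimization you outline. The per-step contraction is \emph{not} a constant $\mu$: when you apply Lemma~\ref{lem:1-1} on ring $k$, the $L^2$-approximation estimate on the cube $\omega^k$ scales with that cube's side length $H^*_k = H^*(1 - (k-1)\delta^*/(NH^*))$, which shrinks as you go inward. This produces a step-dependent factor $\xi_k = (H^*_k/H^*)\,\xi$, and the cumulative improvement is the telescoping product
\begin{equation*}
\prod_{k=1}^{N-1}\Bigl(1 - \frac{k\delta^*}{NH^*}\Bigr),
\end{equation*}
whose logarithm is a Riemann sum of $\log(1-t\sigma)$ with $\sigma=1-\rho$, hence $\approx -N\bigl(1+\rho\log\rho/(1-\rho)\bigr) = -Nh(\rho)$. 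This is exactly the content of Lemma~\ref{lem:2-3} in the paper, and it is the \emph{only} source of the $h(\rho)$ exponent. By replacing this $k$-dependent product with a uniform $\mu^m$, you discard the effect. Your Lagrange optimization does not recover it: at the critical point the stationarity condition forces $\log p = \log(2C_1 m) + 1 + 1/d - \log(1-\rho)$, and substituting back into $m\log\mu$ makes the $-\log(1-\rho)$ term cancel identically, leaving $m\log\mu = -m(1+1/d)$. The $\rho$-dependence then survives only through $m\propto (1-\rho)^{d/(d+1)}n^{1/(d+1)}$, i.e.\ only through the constant $b$ in the leading exponential $e^{-bn^{1/(d+1)}}$, not as a separate multiplicative factor $e^{-h(\rho)bn^{1/(d+1)}}$. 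To obtain the stated bound you must keep the shrinking side-length ratios in the product and invoke something like Lemma~\ref{lem:2-3} to identify $h(\rho)$; the endpoint checks $h(0)=1$, $h(1)=0$ are consistent with the result but do not substitute for this missing step.
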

\begin{rem}\label{rem:3-3-0}
We will show in the proof of \cref{thm:1-4} that $n_{0}$ and $b$ can be explicitly chosen as 
\begin{equation}\label{eq:1-5-5-1}
n_{0} = 2(4e\Theta)^{d} \quad {\rm and}\quad b= \big(2e\Theta + 1/2\big)^{-d/(d+1)}, 
\end{equation}
where $\Theta= \frac{\gamma_{d}^{1/d}}{\sqrt{\pi}} \Big(\frac\beta\alpha\Big)^{1/2}\frac{H_{i}^{\ast}}{H_{i}^{\ast}-H_{i}}$ with $\gamma_{d}$ being the volume of the unit ball in $\mathbb{R}^{d}$.
\end{rem}
\begin{rem}
It follows from \cref{eq:1-5-5} that with $n_{\varepsilon}=\big(2e\Theta + 1/2\big)^{\frac{d}{\varepsilon (d+1)}}>0$, we can obtain a decay rate similar to the one reported in \cite{babuska2011optimal}, i.e., for any $\epsilon\in \big(0, \,\frac{1}{d+1}\big)$, if $n>n_{\varepsilon}$, then 
\begin{equation}\label{eq:1-5-5-0}
d_{n}(\omega_{i},\omega_{i}^{\ast}) \leq C_{1}e^{-1}e^{-n^{(\frac{1}{d+1}-\varepsilon)}} e^{-h(\rho)n^{(\frac{1}{d+1}-\varepsilon)}}.
\end{equation}
\end{rem}
}
\begin{rem}\label{rem:3-2}
The function $h(s)$ is plotted in \cref{fig:3.1} for $s\in (0,1)$. We observe that $h(s)$ is monotonically decreasing. In addition, $h(s)$ has the following properties:
\begin{equation}\label{eq:1-5-6}
\begin{array}{lll}
{\displaystyle 0<h(s)<1,\quad h(s)\geq\max\{0.75-s,\;\,(1-s)/2\}, \quad \forall s\in (0,1), }\\[2mm]
{\displaystyle h(s)\rightarrow 1\quad {\rm as}\;\,s\rightarrow0,\quad  h(s)\rightarrow 0\quad {\rm as}\;\,s\rightarrow1.}
\end{array}
\end{equation}
Combining \cref{eq:1-5-5-0,eq:1-5-6}, it also follows that
\begin{equation}\label{eq:1-5-7}
d_{n}(\omega_{i},\omega_{i}^{\ast})\leq C_{1}e^{-1}e^{-1.75n^{(\frac{1}{d+1}-\epsilon)}} e^{n^{(\frac{1}{d+1}-\epsilon)}H_{i}/H^{\ast}_{i}},
\end{equation}
which gives an explicit rate of convergence with respect to the size of the oversampling domain. Moreover, we see that the rate of convergence with respect to $n$ becomes higher with increasing oversampling size. If $H_{i}/H_{i}^{\ast}$ is close to 0, i.e., $\omega_{i}^{\ast}$ is much larger than $\omega_{i}$, then we can get an asymptotic convergence rate which is approximately the square of that obtained in \cite{babuska2011optimal}.
\end{rem}

\begin{figure}[!htbp]
\centering
\includegraphics[scale=0.8]{./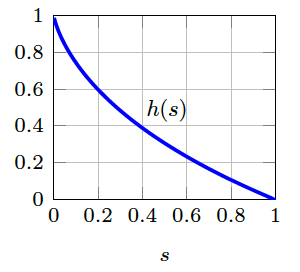}
\vspace{-3mm}\caption{The graph of the function $h(s)$ on $[0,1]$.}\label{fig:3.1}
\end{figure}

\begin{comment}
 \pgfplotsset{%
       every tick label/.append style = {font=\scriptsize},
       every axis label/.append style = {font=\scriptsize}
     }
     \begin{tikzpicture}
   \begin{axis}[grid=major, xmin=0, xmax=1.0, ymin=0, ymax=1.0,
     xlabel=$s$,
     xtick = {0,0.2,...,1}, ytick = {0,0.2,...,1},
     scale=0.4]
     \addplot[blue, samples=2000, smooth, very thick]
       plot (\x, {1+\x * ln(\x) / (1-\x)});
   \end{axis}
   \node[scale=0.8] at (1.4, 1.1) {$h(s)$};
 \end{tikzpicture}
\end{comment}

\begin{comment}
 \begin{tikzpicture}
\draw (0,0) -- (6.5,0);
\draw[red, thick] (1.0,0)--(2.5,0);
\draw[red, thick] (4.0,0)--(5.5,0);
\draw (1.5,0)--(2.0,2)--(4.5,2)--(5.0,0);
\draw (0.6,2)--(1.5,2)--(2.0,0);
\draw(4.5,0)--(5.0,2)--(5.9,2);
\draw[ultra thick] (1.0,0)--(1.0,0.1);
\draw[ultra thick] (5.5,0)--(5.5,0.1);
\draw[decoration={brace,mirror,raise=16pt},decorate]
 (1.0,0) --(5.5,0);
 \draw[decoration={brace,mirror,raise=4pt},decorate]
 (1.0,0)--(2.5,0);
 \node[below = 6pt][scale=0.8] at (1.75,0.0) {${\rm overlaps}$};
\node[below] at (3.25,-0.8) {$\omega$};

\draw[decoration={brace,raise=4pt},decorate]
 (1.0,0) --(1.5,0);
 \node[above] at (1.25,0.2) {$\delta_{\chi}$};
\node[above] at (3.25,2.2) {$\chi({\bm x})$};
\draw[->] (0,0)--(0,3);
\node[left] at (0,0) {$0$};
\node[left] at (0,2.0) {$1.0$};
\draw[thick] (0,2.0) circle [radius=0.03];
 \end{tikzpicture}
\end{comment}

The rest of this subsection is devoted to the proof of \cref{thm:1-4}. The key to the proof is to explicitly construct an $n$-dimensional subspace $Q(n)$ of $H_{0}^{1}(\omega_{i})$ such that the quantity $d(Q(n), \omega_{i})$ defined in \cref{eq:1-4-6} decays almost exponentially. To simplify the notation, the subscript index $i$ of subdomains is omitted in the proof. We first consider the following Neumann eigenvalue problem
\begin{equation}\label{eq:2-1}
a_{\omega^{\ast}}(v_{k}, \varphi) = \lambda_{k} \int_{\omega^{\ast}}v_{k}\varphi \,d{\bm x},\quad \forall \varphi \in H^{1}(\omega^{\ast}),\quad k=1,\cdots,m.
\end{equation}
Let $\Psi_{m}(\omega^{\ast})$ denote the space spanned by the first $m$ eigenfunctions of \cref{eq:2-1}. With the orthogonal decomposition
\begin{equation}\label{eq:2-1-0}
H^{1}(\omega^{\ast}) = H_{A}(\omega^{\ast})\oplus H_{0}^{1}(\omega^{\ast})
\end{equation}
with respect to the energy inner product $a_{\omega^{\ast}}(\cdot,\,\cdot)$, we further project $\Psi_{m}(\omega^{\ast})$ orthogonally from $H^{1}(\omega^{\ast})$ onto $H_{A}(\omega^{\ast})$ and denote it by $W_{m}(\omega^{\ast}) = \mathcal{P}^{A}\Psi_{m}(\omega^{\ast})$. Using the classical Weyl asymptotics for the Laplacian and the comparison principle for eigenvalue problems, we can prove the following approximation property in $W_{m}(\omega^{\ast})$.
\begin{lemma}\label{lem:2-1}
For any $u\in H_{A}(\omega^{\ast})$, there exists a $v_{u}\in W_{m}(\omega^{\ast})$ such that
\begin{equation}\label{eq:2-0-0}
\Vert u - v_{u}\Vert_{L^{2}(\omega^{\ast})} = \inf_{v\in W_{m}(\omega^{\ast})} \Vert u - v\Vert_{L^{2}(\omega^{\ast})} \leq C(m)H^{\ast}\frac{\gamma_{d}^{1/d}}{\sqrt{4\pi}}\alpha^{-1/2}\Vert u \Vert_{a,\omega^{\ast}},
\end{equation}
where $H^{\ast}$ is the side length of the cube $\omega^{\ast}$, $\gamma_{d}$ is the volume of the unit ball in $\mathbb{R}^{d}$, and $C(m) = m^{-1/d}(1+o(1))$.
\end{lemma}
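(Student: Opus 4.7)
The plan is to construct an explicit $v \in W_m(\omega^*)$ meeting the stated $L^2$-bound; since $v_u$ is defined as the best $L^2$-approximant in $W_m(\omega^*)$, the bound transfers to $v_u$ for free. The construction is natural: expanding $u$ in the $L^2$-orthonormal Neumann eigenbasis $\{v_k\}$ from \cref{eq:2-1} as $u = \sum_k c_k v_k$ with $c_k = (u, v_k)_{L^2(\omega^*)}$, set $P_m u := \sum_{k=1}^m c_k v_k \in \Psi_m(\omega^*)$ and take $v := \mathcal{P}^A(P_m u) \in W_m(\omega^*)$.

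Parseval together with the eigen-identity $a_{\omega^*}(v_k, v_k) = \lambda_k$ yields the classical best-$L^2$-approximation bound in $\Psi_m(\omega^*)$:
\[
\|u - P_m u\|_{L^2(\omega^*)}^2 \;=\; \sum_{k > m} c_k^2 \;\leq\; \lambda_{m+1}^{-1}\sum_{k > m} \lambda_k c_k^2 \;\leq\; \lambda_{m+1}^{-1}\,\|u\|_{a,\omega^*}^2.
\]
Since $u \in H_A(\omega^*)$ forces $\mathcal{P}^A u = u$, the error has the clean form $u - v = \mathcal{P}^A(u - P_m u)$. Writing $\mathcal{P}^A = I - \mathcal{Q}$, where $\mathcal{Q}$ is the complementary $a$-orthogonal projection onto $H_0^1(\omega^*)$ induced by the splitting $H^1(\omega^*) = H_A(\omega^*) \oplus H_0^1(\omega^*)$, and using $\mathcal{Q} u = 0$, the task reduces to controlling the Dirichlet residual $\|\mathcal{Q}(P_m u)\|_{L^2(\omega^*)}$ at the same Weyl rate as the $\Psi_m$-approximation itself.

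The final step invokes Weyl's asymptotic for the Neumann Laplacian on the cube of side $H^*$, namely
\[
\lambda_m(-\Delta,\,\mathrm{Neumann})^{-1/2} \;\leq\; \frac{H^*\,\gamma_d^{1/d}}{2\pi}\,m^{-1/d}\,(1 + o(1)),
\]
combined with the comparison $\lambda_m(A) \geq \alpha\,\lambda_m(-\Delta)$ that follows from the coercivity in \cref{eq:1-1-0}. This produces the claimed asymptotic form $C(m)\,H^*\,\gamma_d^{1/d}/\sqrt{4\pi}\,\alpha^{-1/2}$ with $C(m) = m^{-1/d}(1+o(1))$.

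The main obstacle is the Dirichlet-residual estimate. A direct Friedrichs/Poincar\'e bound $\|\mathcal{Q}(P_m u)\|_{L^2} \leq C_P\,\alpha^{-1/2}\,\|\mathcal{Q}(P_m u)\|_{a,\omega^*} \leq C_P\,\alpha^{-1/2}\,\|u - P_m u\|_{a,\omega^*}$ together with $\|u - P_m u\|_{a,\omega^*} \leq \|u\|_{a,\omega^*}$ only yields a residual controlled by $\|u\|_{a,\omega^*}$, with no decay in $m$---which would ruin the rate. Recovering the Weyl decay requires using the structure $u \in H_A$ to produce cancellations in $\mathcal{Q}(P_m u) = -\mathcal{Q}(u - P_m u)$; in particular, the distributional identity $\sum_k \lambda_k c_k v_k = 0$ in $H^{-1}(\omega^*)$ and the refined spectral tail of the expansion should be exploited so that the residual also decays at rate $\lambda_{m+1}^{-1/2}$ rather than remaining at the trivial bound.
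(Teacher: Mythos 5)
Your plan has a genuine gap, and you identify it yourself in the final paragraph: your chosen approximant $v = \mathcal{P}^A(P_m u)$, where $P_m$ is the $L^2$-orthogonal projection onto $\Psi_m(\omega^\ast)$, produces the residual $u - v = (u - P_m u) + \mathcal{Q}(P_m u)$, and you have no argument that $\|\mathcal{Q}(P_m u)\|_{L^2}$ decays at the Weyl rate $\lambda_{m+1}^{-1/2}$. The naive Poincar\'e/energy bound $\|\mathcal{Q}(P_m u)\|_{L^2}\lesssim \|u - P_m u\|_{a,\omega^\ast}$ only gives a quantity that tends to zero with $m$ with no explicit rate for general $u\in H^1$. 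Appealing to ``cancellations from $u\in H_A$'' and the distributional identity is a plan, not a proof: the $L^2$-projection onto $\Psi_m$ and the $a$-orthogonal splitting $\mathcal{P}^A = I - \mathcal{Q}$ do not commute, and nothing in the sketch produces the extra factor of $\lambda_{m+1}^{-1/2}$ for the second term.

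The paper sidesteps the residual entirely by choosing a different approximant. After subtracting the mean so that $u\in\widetilde{H}_A(\omega^\ast)$, take $v = \mathcal{P}_m u$, the \emph{$a$-orthogonal} projection of $u$ onto $\widetilde{W}_m(\omega^\ast)$, not the $L^2$-projection onto $\Psi_m(\omega^\ast)$. Then $u-v$ lies in the $a$-orthogonal complement $\widetilde W_m^{\perp}\subset\widetilde{H}_A(\omega^\ast)$, and the decisive identity is that for $u\in H_A(\omega^\ast)$ one has $a_{\omega^\ast}(u,\mathcal{P}^A w)=a_{\omega^\ast}(u,w)$ for all $w$, so $a$-orthogonality to $W_m=\mathcal{P}^A\Psi_m$ is the same as $a$-orthogonality to $\Psi_m$; the eigenvalue relation $a_{\omega^\ast}(u,v_k)=\lambda_k(u,v_k)_{L^2}$ together with the mean-zero normalization then converts this into $L^2$-orthogonality to $\Psi_m$. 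Hence $\widetilde W_m^\perp\subset\Psi_m^\perp$ and the minimum principle gives $\|u-v\|_{L^2}\le\lambda_{m+1}^{-1/2}\|u-v\|_{a,\omega^\ast}\le\lambda_{m+1}^{-1/2}\|u\|_{a,\omega^\ast}$ with no residual term. From there the comparison with the Neumann Laplacian and Weyl asymptotics proceed exactly as in your last step (note also that the constant should be $\gamma_d^{1/d}/\sqrt{4\pi}$, not $\gamma_d^{1/d}/(2\pi)$, as in the statement of the lemma). If you want to salvage your construction, you would need to argue that $\mathcal{Q}(P_m u)=\mathcal{P}^A(P_m u)-P_m u$ itself satisfies the same $L^2$-bound, and there is no obvious mechanism for that; replacing $P_m$ by the $a$-orthogonal projection onto $\widetilde W_m$ is the fix.
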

A similar lemma was proved in \cite{babuska2011optimal}. The proof is given in \cref{sec:A.3} for completeness. \Cref{lem:2-1} gives the approximation error in the $L^{2}$ norm. Combining \cref{lem:1-1,lem:2-1}, we are able to give an approximation property in the energy norm.
\begin{lemma}\label{lem:2-2}
Assume that $\eta\in W^{1,\infty}(\omega^{\ast})\cap H_{0}^{1}(\omega^{\ast})$. For any $u\in H_{A}(\omega^{\ast})$, there exists a $v_{u}\in W_{m}(\omega^{\ast})$ such that
\begin{equation}
\Vert \eta(u - v_{u})\Vert_{a,\,\omega^{\ast}} \leq C(m)H^{\ast}\frac{\gamma_{d}^{1/d}}{\sqrt{4\pi}}\Big(\frac{\beta}{\alpha}\Big)^{1/2}\Vert \nabla \eta\Vert_{L^{\infty}(\omega^{\ast})}\Vert u \Vert_{a,\omega^{\ast}}.
\end{equation}
\end{lemma}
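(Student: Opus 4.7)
The plan is to combine the two preceding lemmas in the most direct way possible: use \cref{lem:2-1} to pick $v_u\in W_m(\omega^{\ast})$ with small $L^2$-distance from $u$, and then apply the Caccioppoli-type identity from \cref{lem:1-1} to the difference $u-v_u$ to convert this $L^2$ bound into an energy bound for the multiplied function $\eta(u-v_u)$.

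First I would observe that $W_m(\omega^{\ast}) = \mathcal{P}^A \Psi_m(\omega^{\ast}) \subset H_A(\omega^{\ast})$ by construction, and since $u \in H_A(\omega^{\ast})$ by hypothesis, the difference $u - v_u$ also lies in $H_A(\omega^{\ast})$. This is the crucial eligibility check that allows \cref{lem:1-1} to be applied to $u-v_u$; without it, the identity \cref{eq:1-4-1} would not hold.

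Next I would invoke \cref{lem:2-1} with this $u$ to select $v_u \in W_m(\omega^{\ast})$ satisfying
\begin{equation*}
\Vert u - v_u\Vert_{L^2(\omega^{\ast})} \leq C(m)H^{\ast}\frac{\gamma_d^{1/d}}{\sqrt{4\pi}}\alpha^{-1/2}\Vert u\Vert_{a,\omega^{\ast}}.
\end{equation*}
Then, since $\eta\in W^{1,\infty}(\omega^{\ast})\cap H_0^1(\omega^{\ast})$ and $u-v_u \in H_A(\omega^{\ast})$, the estimate \cref{eq:1-4-2} of \cref{lem:1-1} yields
\begin{equation*}
\Vert \eta(u-v_u)\Vert_{a,\omega^{\ast}} \leq \beta^{1/2}\Vert \nabla\eta\Vert_{L^\infty(\omega^{\ast})} \Vert u-v_u\Vert_{L^2(\omega^{\ast})}.
\end{equation*}
Chaining the two displayed inequalities and collecting the constants produces exactly the claimed bound, with the factor $(\beta/\alpha)^{1/2}$ arising from the product of the $\beta^{1/2}$ from the Caccioppoli step and the $\alpha^{-1/2}$ from the $L^2$ approximation step.

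There is no real obstacle here: the work has already been done in \cref{lem:1-1,lem:2-1}, and this lemma is essentially a packaging statement. The only subtlety worth mentioning explicitly in the write-up is the membership $u-v_u\in H_A(\omega^{\ast})$, which justifies the use of the Caccioppoli-type identity and is the reason why the projection $\mathcal{P}^A$ was applied to $\Psi_m(\omega^{\ast})$ in defining $W_m(\omega^{\ast})$ in the first place.
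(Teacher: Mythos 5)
Your proof is correct and is exactly the argument the paper intends (the paper states Lemma~\ref{lem:2-2} without a written proof, simply noting ``Combining \cref{lem:1-1,lem:2-1}, we are able to give an approximation property in the energy norm''). You correctly verify the eligibility condition $u - v_u \in H_A(\omega^{\ast})$, take $v_u$ from \cref{lem:2-1}, apply the Caccioppoli bound \cref{eq:1-4-2}, and chain the two estimates to produce the factor $(\beta/\alpha)^{1/2}$.
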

\begin{rem}\label{rem:3-4}
If we choose $\eta = \chi$ in \cref{lem:2-2}, where $\chi$ is the partition of unity function supported on $\omega$, then we get
\begin{equation}\label{eq:2-1-1}
\begin{array}{lll}
{\displaystyle d_{m}(\omega, \omega^{\ast})\leq \sup_{u\in H_{A,0}(\omega^{\ast})}\inf_{v\in W_{m}(\omega^{\ast})}\frac{\Vert \chi(u-v)\Vert_{a,\omega}}{\Vert u \Vert_{a,\omega^{\ast}}}}\\[2mm]
{\displaystyle \qquad\qquad\,\leq C_{1}C(m)\frac{\gamma_{d}^{1/d}}{\sqrt{4\pi}}\Big(\frac{\beta}{\alpha}\Big)^{1/2}\frac{H^{\ast}}{H},}
\end{array}
\end{equation}
where $C_{1}$ is the constant introduced in \cref{eq:1-3}. Note that \cref{eq:2-1-1} holds for $H=H^{\ast}$, i.e., $\omega=\omega^{\ast}$. Therefore, our method converges even without oversampling, which does not hold for the optimal local approximation introduced in \cite{babuska2011optimal}. \cm{However, without oversampling the decay rate of the method is not exponential with respect to the dimension of the local spaces}.
\end{rem}

Based on \cref{lem:2-2}, we now proceed to define a new space for approximating any $u\in H_{A}(\omega^{\ast})$ with a higher convergence rate. Let $N\geq 1$ be an integer. We choose $\omega^{j}$, $j=1,2,\cdots,N,N+1$, to be the nested family of concentric cubes with side length $H^{\ast}-\delta^{\ast} (j-1)/N$ for which $\omega=\omega^{N+1}\subset \omega^{N}\subset\cdots\subset\omega^{1} = \omega^{\ast}$, where $\delta^{\ast} = H^{\ast}-H$. Note that ${dist}\,(\omega^{k},\omega^{k+1}) = \delta^{\ast}/(2N)$. Let $n=(N+1)\times m$. We define 
\begin{equation}\label{eq:2-8}
\mathcal{T}(n,\omega,\omega^{\ast}) = W_{m}(\omega^{1})+ \cdots+W_{m}(\omega^{N+1}).
\end{equation}
We have the following convergence rate for the approximation space $\mathcal{T}(n,\omega, \omega^{\ast})$.
\begin{lemma}\label{thm:2-1}
Let $u\in H_{A}(\omega^{\ast})$ and $N\geq 1$ be an integer. Then there exists a $z_{u}\in \mathcal{T}(n,\omega,\omega^{\ast})$ such that
\begin{equation}\label{eq:2-9}
\Vert \chi(u -z_{u}) \Vert_{a,\omega} \leq \frac{C_{1}}{2\sqrt{2}N}\prod_{k=1}^{N-1} \big(1-\frac{k\delta^{\ast}}{NH^{\ast}}\big) \xi^{N+1} \Vert u \Vert_{a,\omega^{\ast}},
\end{equation}
where $C_{1}$ is the (positive) constant introduced in \cref{eq:1-3} and $\xi$ is given by
\begin{equation}\label{eq:2-11}
\xi = \xi(N,m)=N\frac{\gamma_{d}^{1/d}}{\sqrt{\pi}}\Big(\frac{\beta}{\alpha}\Big)^{1/2} \frac{H^{\ast}}{\delta^{\ast}}C(m).
\end{equation}
\end{lemma}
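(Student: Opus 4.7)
The plan is an iterative approximation across the nested cubes $\omega^{\ast}=\omega^{1}\supset\omega^{2}\supset\cdots\supset\omega^{N+1}=\omega$, producing one $W_{m}$-approximant per cube and a final $\chi$-weighted approximation on $\omega$. Concretely, I will set $z_{u}:=v_{1}+v_{2}+\cdots+v_{N+1}$ with $v_{j}\in W_{m}(\omega^{j})$, so that $z_{u}\in\mathcal{T}(n,\omega,\omega^{\ast})$ by the definition \cref{eq:2-8}, and track the residuals $u_{j}:=u-(v_{1}+\cdots+v_{j})$ in the local energy norm.

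For each $j=1,\ldots,N$, I would choose a scalar cutoff $\eta_{j}\in W^{1,\infty}(\omega^{j})\cap H_{0}^{1}(\omega^{j})$ with $\eta_{j}\equiv 1$ on $\omega^{j+1}$ and $\Vert\nabla\eta_{j}\Vert_{L^{\infty}}\le 2N/\delta^{\ast}$, which is possible because the separation between $\omega^{j+1}$ and $\partial\omega^{j}$ is $\delta^{\ast}/(2N)$. Applying \cref{lem:2-2} on $\omega^{j}$ with this cutoff, and using the side length $H^{j}=H^{\ast}-(j-1)\delta^{\ast}/N$ of $\omega^{j}$ together with the definition \cref{eq:2-11} of $\xi$, yields $v_{j}\in W_{m}(\omega^{j})$ satisfying
\begin{equation*}
\Vert \eta_{j}(u_{j-1}-v_{j})\Vert_{a,\omega^{j}}\le \xi\,\frac{H^{j}}{H^{\ast}}\,\Vert u_{j-1}\Vert_{a,\omega^{j}}.
\end{equation*}
Since $\eta_{j}\equiv 1$ on $\omega^{j+1}$, the left-hand side dominates $\Vert u_{j}\Vert_{a,\omega^{j+1}}$, and iterating the recursion with $H^{j}/H^{\ast}=1-(j-1)\delta^{\ast}/(NH^{\ast})$ from $j=1$ to $N$ gives
\begin{equation*}
\Vert u_{N}\Vert_{a,\omega^{N+1}}\le \xi^{N}\prod_{k=1}^{N-1}\Big(1-\frac{k\delta^{\ast}}{NH^{\ast}}\Big)\,\Vert u\Vert_{a,\omega^{\ast}}.
\end{equation*}

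The final step uses $\chi$ itself as the cutoff: applying \cref{lem:2-2} on $\omega^{N+1}=\omega$ with $\eta=\chi\in W^{1,\infty}(\omega)\cap H_{0}^{1}(\omega)$ and exploiting $\Vert\nabla\chi\Vert_{L^{\infty}}\le C_{1}/\mathrm{diam}(\omega)\le C_{1}/(\sqrt{d}\,H)$ produces $v_{N+1}\in W_{m}(\omega)$ with
\begin{equation*}
\Vert \chi(u_{N}-v_{N+1})\Vert_{a,\omega}\le \frac{C_{1}\delta^{\ast}}{2\sqrt{d}\,NH^{\ast}}\,\xi\,\Vert u_{N}\Vert_{a,\omega}.
\end{equation*}
Combining this with the iterated bound on $\Vert u_{N}\Vert_{a,\omega}$, and using $\sqrt{d}\ge\sqrt{2}$ together with $\delta^{\ast}/H^{\ast}\le 1$, yields the claimed estimate \cref{eq:2-9}.

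The main subtlety is verifying that $A$-harmonicity is preserved throughout the iteration, so that \cref{lem:2-2} can be legitimately invoked at each stage on a function in $H_{A}(\omega^{j})$. I would argue this by induction: the base case is $u_{0}=u\in H_{A}(\omega^{\ast})$, and at each step $v_{j}\in W_{m}(\omega^{j})\subset H_{A}(\omega^{j})$ by the construction of $W_{m}$ as the $a$-orthogonal projection of $\Psi_{m}$ onto $H_{A}$, so $u_{j}\in H_{A}(\omega^{j})$, whose restriction lies in $H_{A}(\omega^{j+1})$ (any test function on $\omega^{j+1}$ extended by zero remains admissible on $\omega^{j}$). A secondary point is the routine bookkeeping of the shrinking factors $H^{j}/H^{\ast}$ that combine into the product in \cref{eq:2-9}, but this is straightforward once the recursion is in place.
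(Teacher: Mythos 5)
Your proposal follows exactly the paper's iterative telescoping argument: peel off $N$ cut-off approximants $v_{j}\in W_{m}(\omega^{j})$ via \cref{lem:2-2}, use $\eta_{j}\equiv 1$ on $\omega^{j+1}$ to pass to the next cube while preserving $A$-harmonicity, accumulate the shrinking factors $H^{j}/H^{\ast}=1-(j-1)\delta^{\ast}/(NH^{\ast})$ into the product, and finish by applying \cref{lem:2-2} with $\eta=\chi$ on $\omega^{N+1}=\omega$, absorbing the leftover $\sqrt{d}\geq\sqrt{2}$ and $\delta^{\ast}/H^{\ast}\leq1$ into the constant. This is the paper's proof in only slightly reorganized form, and the constants work out to the stated bound.
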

\begin{proof}
We begin by introducing a family of cut-off functions $\eta_{k}\in C_{0}^{1}(\omega^{k})$, $k=1,2,\cdots,N$, such that $\eta_{k}({\bm x})=1$ for ${\bm x}\in \omega^{k+1}$ and $|\nabla \eta_{k}({\bm x})| \leq 2N/\delta^{\ast}$. For $u\in H_{A}(\omega^{\ast}) = H_{A}(\omega^{1})$, it follows from \cref{lem:2-2} that there exists a $v_{u}^{1}\in W_{m}(\omega^{1})$ such that
\begin{equation}\label{eq:2-10}
\Vert \eta_{1}(u - v_{u}^{1}) \Vert_{a,\omega^{1}} \leq \xi \Vert u \Vert_{a,\omega^{1}}.
\end{equation}
Note that $u-v_{u}^{1} \in H_{A}(\omega^{2})$ and the side length of $\omega^{2} = H^{\ast}-\delta^{\ast}/N$. Applying \cref{lem:2-2} again, we can find a $v_{u}^{2}\in W_{m}(\omega^{2})$ such that
\begin{equation}\label{eq:2-12}
\begin{array}{lll}
{\displaystyle  \Vert \eta_{2}(u-v_{u}^{1} - v_{u}^{2}) \Vert_{a,\omega^{2}} \leq \big(1-\frac{\delta^{\ast}}{NH^{\ast}}\big)\xi \Vert u-v_{u}^{1} \Vert_{a,\omega^{2}} }\\[2mm]
{\displaystyle  \leq \big(1-\frac{\delta^{\ast}}{NH^{\ast}}\big)\xi \Vert \eta_{1}(u-v_{u}^{1}) \Vert_{a,\omega^{1}} \leq \big(1-\frac{\delta^{\ast}}{NH^{\ast}}\big)\xi^{2} \Vert u \Vert_{a,\omega^{1}},}
\end{array}
\end{equation}
where we have used \cref{eq:2-10} in the last inequality. Repeating this process until $k=N$, we see that there exists a $v_{u}^{N}\in W_{m}(\omega^{N})$ such that
\begin{equation}\label{eq:2-13}
\Vert \eta_{N}(u - \sum_{k=1}^{N-1}v_{u}^{k} - v_{u}^{N} ) \Vert_{a,\omega^{N}} \leq \prod_{k=1}^{N-1} \big(1-\frac{k\delta^{\ast}}{NH^{\ast}}\big)\xi^{N} \Vert u \Vert_{a,\omega^{1}}.
\end{equation}
Finally, applying \cref{lem:2-2} with $\eta = \chi$, we can find a $v_{u}^{N+1}\in W_{m}(\omega^{N+1})$ such that
\begin{equation}\label{eq:2-14}
\begin{array}{lll}
{\displaystyle  \Vert \chi(u - \sum_{k=1}^{N}v_{u}^{k} -  v_{u}^{N+1}) \Vert_{a,\omega}  =\Vert \chi(u - \sum_{k=1}^{N}v_{u}^{k} - v_{u}^{N+1}) \Vert_{a,\omega^{N+1}} }\\[3mm]
{\displaystyle \leq \frac{C_{1}\xi}{2\sqrt{2}N}\Vert \eta_{N}(u - \sum_{k=1}^{N-1}v_{u}^{k} - v_{u}^{N} ) \Vert_{a,\omega^{N}} \leq  \frac{C_{1}}{2\sqrt{2}N}\prod_{k=1}^{N-1} \big(1-\frac{k\delta^{\ast}}{NH^{\ast}}\big) \xi^{N+1} \Vert u \Vert_{a,\omega^{\ast}},}
\end{array}
\end{equation}
where we have used the fact that $|\nabla \chi({\bm x})| \leq C_{1}/diam\,(\omega)$. Therefore, $z_{u} = \sum_{k=1}^{N+1}v_{u}^{k}$ satisfies \cref{eq:2-9}.\qquad \end{proof}

\cm{
{\it Proof of \cref{thm:1-4}}. In what follows, we show how \cref{thm:2-1} leads to the desired estimate \cref{eq:1-5-5}. To this end, we choose $m$ such that 
\begin{equation}\label{eq:2-14-0}
\frac{\Theta(N+1)^{2}e}{N}\leq m^{1/d}<\frac{\Theta(N+1)^{2}e}{N} +1,
\end{equation}
where $\Theta=\frac{\gamma_{d}^{1/d}}{\sqrt{\pi}}\big(\frac{\beta}{\alpha}\big)^{1/2} \frac{H^{\ast}}{\delta^{\ast}}$. From \cref{eq:2-14-0,eq:2-11}, we deduce that
\begin{equation}
\xi\leq \frac{1}{e}\frac{N^{2}}{(N+1)^{2}},
\end{equation}
and consequently
\begin{equation}\label{eq:2-15}
\xi^{N+1}\leq e^{-(N+1)}\Big(\frac{N}{N+1}\Big)^{2(N+1)}\leq e^{-2}e^{-(N+1)}.
\end{equation}
Furthermore, it can be proved that
\begin{equation}\label{eq:2-15-0}
\prod_{k=1}^{N-1} \big(1-\frac{k\delta^{\ast}}{NH^{\ast}}\big) \leq e\sqrt{N}e^{-Nh(\rho)},
\end{equation}
where $h(s) = 1+{s\log(s)}/{(1-s)}$ and $\rho =1- \delta^{\ast}/H^{\ast} = H/H^{\ast}$. The proof of \cref{eq:2-15-0} is given in \cref{lem:2-3} at the end of this subsection. Combining \cref{eq:2-15,eq:2-15-0}, we have
\begin{equation}\label{eq:2-15-1}
\begin{array}{lll}
{\displaystyle \frac{C_{1}}{2\sqrt{2}N}\prod_{k=1}^{N-1} \big(1-\frac{k\delta^{\ast}}{NH^{\ast}}\big) \xi^{N+1}\leq \frac{C_{1}e^{-1}}{2\sqrt{2N}} e^{-(N+1)}e^{-Nh(\rho)}}\\[4mm]
{\displaystyle \qquad  \leq C_{1}e^{-1}e^{-(N+1)}e^{-(N+1)h(\rho)},}
\end{array}
\end{equation}
where we have used the fact that $0<h(\rho)<1$. By \cref{eq:2-14-0}, we have
\begin{equation}
m^{1/d}<\frac{\Theta(N+1)^{2}e}{N} +1\leq (2e\Theta+1/2)(N+1),
\end{equation}
which implies that $n=(N+1)m \leq  (2e\Theta+1/2)^{d}(N+1)^{d+1}$ and thus 
\begin{equation}\label{eq:2-15-2}
N+1 \geq  (2e\Theta+1/2)^{-d/(d+1)}\,n^{1/(d+1)}.
\end{equation}
Combining \cref{eq:2-15-1,eq:2-15-2} gives
\begin{equation}\label{eq:2-16}
\frac{C_{1}}{2\sqrt{2}N}\prod_{k=1}^{N-1} \big(1-\frac{k\delta^{\ast}}{NH^{\ast}}\big) \xi^{N+1}\leq C_{1}e^{-1}e^{-bn^{{1}/{(d+1)}}} e^{-h(\rho)bn^{{1}/{(d+1)}}},
\end{equation}
where $b = (2e\Theta+1/2)^{-d/(d+1)}$. In view of \cref{eq:2-14-0} and the fact that $N\geq 1$, we find
\begin{equation}\label{eq:2-18}
n=(N+1)m\geq (N+1)\big(e\Theta(N+1)^{2}/N\big)^{d}\geq 2(4e\Theta)^{d}.
\end{equation}
Finally, we define an $n$-dimensional subspace of $H^{1}_{0}(\omega)$ as $Q(n) = \chi \,\mathcal{T}(n,\omega,\omega^{\ast})$. \Cref{thm:2-1} together with \cref{eq:2-16,eq:2-18} yield that
\begin{equation}\label{eq:2-19}
\begin{array}{lll}
{\displaystyle d_{n}(\omega,\omega^{\ast}) \leq \sup_{u\in H_{A,0}(\omega^{\ast})} \inf_{v\in Q(n)}\frac {\Vert \chi u-v\Vert_{a,\omega}}{\Vert u \Vert_{a,\omega^{\ast}}} }\\[4mm]
{\displaystyle \qquad \qquad\; \;\leq C_{1}e^{-1}e^{-bn^{{1}/{(d+1)}}} e^{-h(\rho)bn^{{1}/{(d+1)}}},}
\end{array}
\end{equation}
if $n\geq n_{0}=2(4e\Theta)^{d}$. This completes the proof of \cref{thm:1-4}.}\qquad\qquad $\square$

We conclude this subsection by proving the auxiliary inequality \cref{eq:2-15-0} used in the proof of \cref{thm:1-4}.
\begin{lemma}\label{lem:2-3}
Let $N\geq 2$ be an integer and $H^{\ast}>0$. Then the inequality
\begin{equation}\label{eq:2-20}
 \prod_{k=1}^{N-1} \big(1-\frac{k\delta^{\ast}}{NH^{\ast}}\big) \leq e\sqrt{N}e^{-Nh(\rho)}
\end{equation}
holds for any $\delta^{\ast}\in (0,H^{\ast})$, where $h(s) = 1+{s\log(s)}/{(1-s)}$ and $\rho =1- \delta^{\ast}/H^{\ast}$.
\end{lemma}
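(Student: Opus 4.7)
The plan is to express the product in closed form as a ratio of Gamma functions and then apply Stirling's formula. Setting $t = \delta^{\ast}/H^{\ast}\in(0,1)$, so that $\rho = 1-t$, and introducing the auxiliary parameter $\beta = N\rho/t$, direct algebra yields the identities $\beta + N = N/t$ and $(\beta+1)t = N\rho+t$. Writing each factor as $1 - kt/N = (N-kt)/N$ and reindexing $j = N-k$, one obtains
\begin{equation*}
\prod_{k=1}^{N-1}\Bigl(1 - \tfrac{kt}{N}\Bigr) = \Bigl(\tfrac{t}{N}\Bigr)^{N-1}\prod_{j=1}^{N-1}(j+\beta) = \Bigl(\tfrac{t}{N}\Bigr)^{N-1}\frac{\Gamma(N+\beta)}{\Gamma(1+\beta)}.
\end{equation*}

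Next I would invoke Stirling via Binet's representation: with $R(x) := \log\Gamma(x) - [(x-\tfrac12)\log x - x + \log\sqrt{2\pi}]$, it is classical that $R$ is strictly positive and strictly decreasing on $(0,\infty)$. Hence for any $x_2 > x_1 > 0$,
\begin{equation*}
\log\Gamma(x_2) - \log\Gamma(x_1) < (x_2-\tfrac12)\log x_2 - (x_1-\tfrac12)\log x_1 - (x_2 - x_1),
\end{equation*}
with no additive $1/(12x)$ correction. Applying this with $x_2 = N+\beta$ and $x_1 = 1+\beta$, substituting into the logarithm of the identity above, and simplifying using $\beta + N = N/t$, the right-hand side collapses to
\begin{equation*}
\log\prod_{k=1}^{N-1}\Bigl(1 - \tfrac{kt}{N}\Bigr)\leq\bigl(\beta+\tfrac12\bigr)\log\frac{\beta+N}{\beta+1} - (N-1).
\end{equation*}

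It then remains to show this bound does not exceed $1+\tfrac12\log N - Nh(\rho)$. Using the identities $(\beta+N)/(\beta+1) = N/(N\rho+t)$ and $Nh(\rho) = N + \beta\log\rho$ (both immediate from the definitions of $\beta$ and $h$), the required inequality rearranges to
\begin{equation*}
\beta\log(N\rho) \leq \bigl(\beta+\tfrac12\bigr)\log(N\rho+t),
\end{equation*}
which I would verify case-by-case. If $N\rho\geq 1$, both logarithms are non-negative and the inequality follows from $\beta<\beta+\tfrac12$ combined with $N\rho<N\rho+t$. If $N\rho<1$, the left side is non-positive (using the convention $0\log 0 = 0$ at $\rho=0$) while $N\rho+t = 1+(N-1)\rho\geq 1$ makes the right side non-negative, so the inequality is trivial.

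The main obstacle is the algebraic simplification in the Stirling step, where the bounds applied to $\Gamma(N+\beta)$ and $\Gamma(1+\beta)$ must collapse cleanly against the $(N-1)\log(t/N)$ prefactor. The crucial point — avoiding the two-sided Stirling bound with its $1/(12x)$ correction and instead exploiting monotonicity of Binet's remainder — is what produces the sharp constant $e$ in front of $\sqrt{N}$; the $\sqrt{N}$ itself arises precisely from the $(x-\tfrac12)$ coefficient in the Stirling asymptotic.
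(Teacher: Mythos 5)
Your proof is correct, and it takes a genuinely different route from the paper's. The paper takes logarithms, defines $G(\sigma)=\sum_{k=1}^{N-1}\log(1-k\sigma/N)-1-\tfrac12\log N+Nh(1-\sigma)$, verifies $\lim_{\sigma\to1}G(\sigma)\le 0$ via the Stirling bound $N!\le eN^{N+1/2}e^{-N}$, and then claims $G'(\sigma)>0$ on $(0,1)$ by a Taylor-series argument that is stated but not carried out in detail; the inequality then propagates from the endpoint by monotonicity. You instead obtain a bound that is uniform in $\sigma$ directly: the closed form $\prod_{k=1}^{N-1}(1-kt/N)=(t/N)^{N-1}\Gamma(N+\beta)/\Gamma(1+\beta)$ with $\beta=N\rho/t$ is correct (check via $N\rho+jt=t(\beta+j)$ after reindexing $j=N-k$), the one-sided Stirling bound from the positivity and monotonicity of the Binet remainder $R$ is classical, and the cancellation using $N+\beta=N/t$ does collapse cleanly to $(\beta+\tfrac12)\log\frac{\beta+N}{\beta+1}-(N-1)$. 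The remaining reduction via $Nh(\rho)=N+\beta\log\rho$ and $\frac{\beta+N}{\beta+1}=\frac{N}{N\rho+t}$ to $\beta\log(N\rho)\le(\beta+\tfrac12)\log(N\rho+t)$ is also right, and the case split closes since $N\rho+t=1+(N-1)\rho\ge 1$ for $\rho\in(0,1)$ (with the $\rho=0$ convention being unnecessary, as $\delta^*<H^*$ forces $\rho>0$). The two proofs share the same Stirling-type core (your Binet bound at $x=1+\beta$ with $\beta\to\infty$ recovers exactly the paper's $N!\le eN^{N+1/2}e^{-N}$), but yours is more self-contained: it replaces the unverified derivative computation by an explicit Gamma-ratio representation and an elementary logarithmic inequality, at the cost of invoking the monotonicity of the Binet function rather than only Stirling's one-sided factorial bound.
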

\begin{proof}
Let $\sigma = \delta^{\ast}/H^{\ast}$. Taking the natural logarithm of both sides of \cref{eq:2-20}, it suffices to prove
\begin{equation}\label{eq:2-21}
 \sum_{k=1}^{N-1} \log{(1-\frac{k}{N}\sigma)} \leq 1+\frac{1}{2}\log{N}-Nh(1-\sigma),\quad \forall \sigma\in (0,1).
\end{equation}
Introduce the function
\begin{equation}
\begin{array}{lll}
{\displaystyle G(\sigma) = \sum_{k=1}^{N-1} \log{(1-\frac{k}{N}\sigma)} - 1 - \frac{1}{2}\log{N}+Nh(1-\sigma)}\\[2mm]
{\displaystyle = \sum_{k=1}^{N-1} \log{(1-\frac{k}{N}\sigma)} - 1 - \frac{1}{2}\log{N}+N\big(1+(1-\sigma)\log{(1-\sigma)}/\sigma\big).}
\end{array}
\end{equation}
It is easy to see that $\displaystyle \lim_{\sigma\rightarrow 1}G(\sigma) = \log{(N!/N^{N})} - 1 - \frac{1}{2}\log{N}+N$. Using the classical inequality $N!\leq e N^{N+1/2}e^{-N}$, we find $\displaystyle \lim_{\sigma\rightarrow 1}G(\sigma)\leq 0$. Hence, to prove \cref{eq:2-21}, we only need to show that $G(\sigma)$ is monotonically increasing on $(0,1)$. Now taking the derivative of $G(\sigma)$ and using the Taylor series expansions of functions $1/(1-x)$ and $\log(1-x)$, we find $G^{\prime}(\sigma)>0$, which completes the proof of this lemma. \qquad \end{proof}
\subsection{Local approximation at the boundary}

%\begin{figure}[!htbp]
%\centering
% \begin{tikzpicture} 
% \draw[pattern=north east hatch, hatch distance=2.5mm, hatch thickness=.5pt, pattern color=red] (13, 0.75)--(12.5, 0.75)--(12.5,-0.75)--(13,-0.75)--(14,0)--(13,0.75); 
% \draw[pattern=dots, pattern color=green] (12.5, 1.125)--(11.8, 1.125)--(11.8, -1.125)--(12.5,-1.125)--(14,0)--(12.5, 1.125);
% 
%\draw[thick] (11.8,-1.65)--(14,0)--(11.8,1.65);
%\draw[thick] (13, 0.75)--(12.5, 0.75)--(12.5,-0.75)--(13,-0.75);
%\draw[thick] (12.5, 1.125)--(11.8, 1.125)--(11.8, -1.125)--(12.5,-1.125);
%
%\draw[->, dashed, very thick] (11.3,-1.1)--(12.5,0);
%\draw[->, dashed, very thick] (10.8,0.2)--(11.8,0.8);
%   
%\node[scale=1.8] at (10.9,-1.3) {$\omega_{i}$};
%\node[scale=1.8] at (10.3,0) {$\omega^{\ast}_{i}$};
%\node at (13.5,0.85) {$\partial \Omega_{D}$};
%\node at (13,-1.3) {$\partial \Omega_{N}$};
% \end{tikzpicture}
%\caption{Illustration of a subdomain that touches the boundary of $\Omega$ and the associated oversampling domain.}\label{fig:3.2}
%\end{figure}
 
In this subsection, we introduce the local particular function and the optimal local approximation space for a subdomain $\omega_{i}$ that intersects the boundary of $\Omega$. As before, we introduce another domain $\omega_{i}^{\ast}$ such that $\omega_{i}\subseteq \omega_{i}^{\ast}\subset\Omega$ as illustrated in \cref{fig:1-1}. Without loss of generality, we assume that $\partial \omega^{\ast}_{i} \bigcap \partial \Omega_{D} \neq \emptyset$. The pure Neumann boundary case can be addressed in a similar way. Following the ideas of \cite{babuvska2020multiscale}, we first define a function $\psi_{i} = \psi_{i}^{r} + \psi_{i}^{d}$, where $\psi_{i}^{r}$ and $\psi_{i}^{d}$ satisfy
\begin{equation}\label{eq:3-1}
\left\{
\begin{array}{lll}
{\displaystyle -{\rm div} (A({\bm x})\nabla \psi^{r}_{i}({\bm x})) = f({\bm x}), \quad {\rm in}\;\, \omega^{\ast}_{i}  }\\[2mm]
{\displaystyle {\bm n}\cdot A({\bm x})\nabla\psi^{r}_{i}({\bm x}) = g({\bm x}),\qquad \;\; {\rm on}\;\, \partial \omega^{\ast}_{i} \cap \partial \Omega_{N}}\\[2mm]
{\displaystyle \psi^{r}_{i}({\bm x}) = 0,\quad \quad \quad \qquad \qquad \quad \;\;{\rm on}\;\, \partial \omega^{\ast}_{i}\cap \Omega}\\[2mm]
{\displaystyle \psi^{r}_{i}({\bm x}) = 0,\;\quad \quad \qquad \qquad \qquad \;{\rm on}\;\, \partial \omega^{\ast}_{i} \cap \partial \Omega_{D}}
\end{array}
\right.
\end{equation}
and 
\begin{equation}\label{eq:3-1-0}
\left\{
\begin{array}{lll}
{\displaystyle -{\rm div} (A({\bm x})\nabla \psi^{d}_{i}({\bm x})) = 0, \quad \,{\rm in} \;\,\omega^{\ast}_{i}  }\\[2mm]
{\displaystyle {\bm n}\cdot A({\bm x})\nabla\psi^{d}_{i}({\bm x}) = 0,\qquad \;\;\, {\rm on}\;\, \partial \omega^{\ast}_{i} \cap \partial \Omega_{N}}\\[2mm]
{\displaystyle {\bm n}\cdot A({\bm x})\nabla\psi^{d}_{i}({\bm x}) = 0,\quad \quad \;\; \;{\rm on} \;\,\partial \omega^{\ast}_{i}\cap \Omega}\\[2mm]
{\displaystyle \psi^{d}_{i}({\bm x}) = q({\bm x}),\;\;\;\quad \qquad \qquad {\rm on} \;\,\partial \omega^{\ast}_{i} \cap \partial \Omega_{D},}
\end{array}
\right.
\end{equation}
respectively. \cm{A similar treatment of the mixed boundary conditions was also discussed in \cite{chen2021exponential}}. By definition, we see that $(u_{0}-\psi_{i})({\bm x}) = 0$ on $\partial \omega^{\ast}_{i} \cap \partial \Omega_{D}$, where $u_{0}$ is the solution of \cref{eq:1-2}. Moreover, it can be proved that
\begin{equation}\label{eq:3-1-1}
a_{\omega^{\ast}_{i}}(u_{0}-\psi_{i}, v) = 0, \quad \forall v\in H^{1}_{0D}(\omega^{\ast}_{i}),
\end{equation}
where
\begin{equation}\label{eq:3-1-2}
H^{1}_{0D}(\omega^{\ast}_{i}) = \big\{v\in H^{1}(\omega^{\ast}_{i})\;:\;v = 0\;\;{\rm on}\;\,  (\partial \omega^{\ast}_{i}\cap \Omega)\cup(\partial\omega^{\ast}_{i} \cap \partial \Omega_{D})\big\}.
\end{equation}
In fact, the weak formulations of \cref{eq:3-1,eq:3-1-0} imply that
\begin{equation}\label{eq:3-1-3}
a_{\omega^{\ast}_{i}}(\psi_{i}, v) = F_{\omega_{i}^{\ast}}(v), \quad \forall v\in H^{1}_{0D}(\omega^{\ast}_{i}),
\end{equation}
where $F_{\omega_{i}^{\ast}}(\cdot)$ is defined in (\ref{eq:1-2-2}). A combination of \cref{eq:1-2,eq:3-1-3} gives \cref{eq:3-1-1}. Define
\begin{equation}\label{eq:3-2}
\begin{array}{lll}
{\displaystyle  H_{A,D}(\omega^{\ast}_{i}) = \big\{ u\in H^{1}(\omega^{\ast}_{i})\;:\; a_{\omega^{\ast}_{i}}(u, v) = 0, \quad \forall v\in H^{1}_{0D}(\omega^{\ast}_{i})\big\}, }\\[2mm]
{\displaystyle H^{0}_{A,D}(\omega^{\ast}_{i}) = \big\{ v\in H_{A,D}(\omega^{\ast}_{i})\;:\;v = 0 \;\;{\rm on} \;\,\partial \omega^{\ast}_{i} \cap \partial \Omega_{D}\big\}.}
\end{array}
\end{equation}
We see that $u_{0}-\psi_{i}\in H^{0}_{A,D}(\omega^{\ast}_{i})$. 
\begin{rem}
In \cite{babuska2011optimal}, the $A$-harmonic spaces on boundary subdomains are defined in the same way as for interior subdomains in which functions are $a$-orthogonal to $H_{0}^{1}(\omega_{i}^{\ast})$. In this paper, we take the boundary conditions into account and introduce the different $A$-harmonic spaces on boundary subdomains in which functions are $a$-orthogonal to a bigger space $H_{0D}^{1}(\omega_{i}^{\ast})$. This facilitates our subsequent analysis.
\end{rem}

In what follows, we proceed in the same way as for interior subdomains to introduce the optimal approximation space for approximating a function in $H^{0}_{A,D}(\omega^{\ast}_{i})$ multiplied by the partition of unity function. The following lemma is the counterpart of \cref{lem:1-1} for boundary subdomains. It can be proved by using the fact that $\eta^{2}u\in H^{1}_{0D}(\omega_{i}^{\ast})$ and a similar argument as in the proof of \cref{lem:1-1}. 
\cm{
\begin{lemma}\label{lem:3-1}
Assume that $\eta \in W^{1,\infty}(\omega_{i}^{\ast})$ satisfies $\eta({\bm x}) = 0$ on $\partial\omega_{i}^{\ast} \cap \Omega$. Then, for any $u,\,v\in H^{0}_{A,D}(\omega_{i}^{\ast})$,
\begin{equation}\label{eq:3-4}
\int_{\omega_{i}^{\ast}} A \nabla (\eta u)\cdot \nabla (\eta v)\,d{\bm x} = \int_{\omega_{i}^{\ast}}(A\nabla \eta \cdot \nabla \eta) uv\,d{\bm x}.
\end{equation}
In particular,
\begin{equation}\label{eq:3-5}
\Vert \eta u \Vert_{a, \omega_{i}^{\ast}} \leq \beta^{\frac12} \Vert \nabla \eta \Vert_{L^{\infty}(\omega_{i}^{\ast})} \Vert u \Vert_{L^{2}(\omega_{i}^{\ast})},\quad \forall u\in H^{0}_{A,D}(\omega_{i}^{\ast}).
\end{equation}
\end{lemma}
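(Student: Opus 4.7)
The plan is to mimic the proof of \cref{lem:1-1} verbatim, with the key modification being the choice of test-function space and a careful check that the relevant product lies in $H^{1}_{0D}(\omega_{i}^{\ast})$. The whole algebraic manipulation in \cref{lem:1-1} hinged on a single fact: the test function $\eta^{2}v$ could legally be paired against $u$ through the $A$-harmonic orthogonality condition, killing one of the four terms in the expansion of $A\nabla(\eta u)\cdot\nabla(\eta v)$. Here the definition of $H_{A,D}(\omega_{i}^{\ast})$ gives orthogonality against a larger space $H^{1}_{0D}(\omega_{i}^{\ast})$ instead of $H^{1}_{0}(\omega_{i}^{\ast})$, and correspondingly the hypothesis on $\eta$ has been weakened: it only must vanish on $\partial\omega_{i}^{\ast}\cap\Omega$, not on all of $\partial\omega_{i}^{\ast}$. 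Matching these two weakenings is exactly what makes the lemma go through.

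First I would verify the membership $\eta^{2}v \in H^{1}_{0D}(\omega_{i}^{\ast})$ for $\eta$ and $v$ as in the hypothesis. The product $\eta^{2}v$ clearly lies in $H^{1}(\omega_{i}^{\ast})$ because $\eta\in W^{1,\infty}$ and $v\in H^{1}$. On $\partial\omega_{i}^{\ast}\cap\Omega$, $\eta$ vanishes, so $\eta^{2}v=0$ there; on $\partial\omega_{i}^{\ast}\cap\partial\Omega_{D}$, $v$ vanishes since $v\in H^{0}_{A,D}(\omega_{i}^{\ast})$, so $\eta^{2}v=0$ there as well. No constraint is needed on $\partial\omega_{i}^{\ast}\cap\partial\Omega_{N}$. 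Thus $\eta^{2}v$ satisfies the boundary conditions defining $H^{1}_{0D}(\omega_{i}^{\ast})$, and since $u\in H_{A,D}(\omega_{i}^{\ast})$, the identity $\int_{\omega_{i}^{\ast}}A\nabla u\cdot\nabla(\eta^{2}v)\,d{\bm x}=0$ holds. This is the analogue of \cref{eq:1-4-4} in the present setting.

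Next I would carry out exactly the expansion \cref{eq:1-4-3}: writing $\nabla(\eta u)=\eta\nabla u + u\nabla\eta$ and similarly for $\nabla(\eta v)$, and then rearranging one of the cross-terms as $A\nabla u \cdot \nabla(\eta^{2}v)$ minus compensating pieces, one obtains
\begin{equation*}
a_{\omega_{i}^{\ast}}(\eta u,\eta v) = \int_{\omega_{i}^{\ast}}(A\nabla\eta\cdot\nabla\eta)uv\,d{\bm x} - \int_{\omega_{i}^{\ast}}(A\nabla u\cdot\nabla\eta)\eta v\,d{\bm x} + \int_{\omega_{i}^{\ast}}(A\nabla\eta\cdot\nabla v)\eta u\,d{\bm x},
\end{equation*}
after discarding $\int A\nabla u\cdot\nabla(\eta^{2}v) = 0$. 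Swapping the roles of $u$ and $v$ (both of which lie in $H^{0}_{A,D}(\omega_{i}^{\ast})$, so the argument is symmetric), adding the two identities, and invoking the symmetry of $A$ to cancel the two asymmetric cross terms, yields the claimed identity \cref{eq:3-4}.

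Finally, setting $u=v$ in \cref{eq:3-4} and applying the upper ellipticity bound \cref{eq:1-1-0} to the integrand $(A\nabla\eta\cdot\nabla\eta)u^{2}\le \beta|\nabla\eta|^{2}u^{2}$ delivers the $L^{\infty}$–$L^{2}$ estimate \cref{eq:3-5}. The only substantive step is really the boundary check in the first paragraph; the rest is integration by parts and symmetrization identical to \cref{lem:1-1}. I expect no serious obstacle, the main point being simply the observation that the weakened boundary hypothesis on $\eta$ together with $v|_{\partial\Omega_{D}}=0$ exactly matches the boundary conditions of the test-space $H^{1}_{0D}(\omega_{i}^{\ast})$.
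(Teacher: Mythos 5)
Your proposal is correct and matches the paper's own (one-line) sketch precisely: the key observation is that $\eta^{2}v \in H^{1}_{0D}(\omega_{i}^{\ast})$ because $\eta$ vanishes on $\partial\omega_{i}^{\ast}\cap\Omega$ and $v$ vanishes on $\partial\omega_{i}^{\ast}\cap\partial\Omega_{D}$, after which the expansion and symmetrization of Lemma \ref{lem:1-1} carry over verbatim. Your careful boundary check is exactly the content the paper leaves implicit.
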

}

Now we introduce the operator $P:H^{0}_{A,D}(\omega_{i}^{\ast})\rightarrow H^{1}_{0D}(\omega_{i})$ such that $P(u)({\bm x}) = \chi_{i}({\bm x}) u({\bm x})$ for all ${\bm x}\in\omega_{i}$ and $u\in H^{0}_{A,D}(\omega_{i}^{\ast})$. Using \cref{lem:3-1} and the Rellich compactness theorem, we find that the operator $P$ is compact from $H^{0}_{A,D}(\omega_{i}^{\ast})$ into $H_{0D}^{1}(\omega_{i})$. As before, we consider approximating the set $P(H^{0}_{A,D}(\omega_{i}^{\ast}))$ in $H_{0D}^{1}(\omega_{i})$ by $n$-dimensional subspaces $Q(n)$. For $n\in\mathbb{N}$, the problem of finding the optimal approximation space is formulated as follows. Let
\begin{equation}\label{eq:3-12}
d_{n}(\omega_{i},\omega_{i}^{\ast}) = \inf_{Q(n)\subset H_{0D}^{1}(\omega_{i})}\sup_{u\in H^{0}_{A,D}(\omega_{i}^{\ast})} \inf_{v\in Q(n)}\frac {\Vert Pu-v\Vert_{a,\omega_{i}}}{\Vert u \Vert_{a,\omega_{i}^{\ast}}}.
\end{equation}
The optimal approximation space $\hat{Q}(n)$ satisfies
\begin{equation}\label{eq:3-13}
d_{n}(\omega_{i}, \omega_{i}^{\ast}) =\sup_{u\in H^{0}_{A,D}(\omega_{i}^{\ast})} \inf_{v\in \hat{Q}(n)}\frac {\Vert Pu-v\Vert_{a,\omega_{i}}}{\Vert u \Vert_{a,\omega_{i}^{\ast}}}.
\end{equation}
As for interior subdomains, the $n$-width $d_{n}(\omega_{i},\omega_{i}^{\ast})$ can be characterized as follows.
\begin{theorem}\label{thm:3-1}
For each $k\in\mathbb{N}$, let $\lambda_{k}$ and $v_{k}$ be the $k$-th eigenvalue and the associated eigenfunction of the following problem
\begin{equation}\label{eq:3-14}
a_{\omega_{i}^{\ast}}(v, \varphi)=\lambda\,a_{\omega_{i}}(\chi_{i} v, \chi_{i} \varphi)\cm{=\lambda\int_{\omega_i}(A\nabla \chi_{i} \cdot \nabla \chi_{i}) v\varphi\,d{\bm x}},\quad \forall \varphi\in H_{A,D}^{0}(\omega_{i}^{\ast}).
\end{equation}
Then, $d_{n}(\omega_{i}, \omega_{i}^{\ast}) = \lambda^{-1/2}_{n+1}$, and the associated optimal approximation space is given by $\hat{Q}(n) = {\rm span}\{\chi_{i}v_{1}, \cdots, \chi_{i} v_{n}\}$.
\end{theorem}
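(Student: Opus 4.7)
The plan is to mirror the argument used for Theorem \ref{thm:1-3} in the interior case, with the spaces $H_{A,0}(\omega_i^*)$ and $H_0^1(\omega_i)$ replaced by $H_{A,D}^0(\omega_i^*)$ and $H_{0D}^1(\omega_i)$, and with Lemma \ref{lem:1-1} replaced by its boundary counterpart Lemma \ref{lem:3-1}. The key observation that enables the whole argument is already in hand: by Lemma \ref{lem:3-1}, for any $u \in H_{A,D}^0(\omega_i^*)$ the partition-of-unity multiplication $\chi_i u$ is controlled by $\|\nabla \chi_i\|_{L^\infty}\|u\|_{L^2(\omega_i^*)}$ in the energy norm, so combined with the Rellich compactness theorem the operator $P:H_{A,D}^0(\omega_i^*) \to H_{0D}^1(\omega_i)$, $P u = \chi_i u$, is compact.

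First I would introduce the Hilbert adjoint $P^*:H_{0D}^1(\omega_i)\to H_{A,D}^0(\omega_i^*)$ with respect to the inner products $a_{\omega_i}(\cdot,\cdot)$ and $a_{\omega_i^*}(\cdot,\cdot)$; these are genuine inner products on the respective spaces (on $H_{0D}^1(\omega_i)$ by Poincaré–Friedrichs using $\partial\omega_i^*\cap\partial\Omega_D\neq\emptyset$, and on $H_{A,D}^0(\omega_i^*)$ for the same reason). I would then write down the singular value decomposition of $P$: singular values $\mu_1 \geq \mu_2 \geq \cdots \to 0$, orthonormal right singular vectors $\{v_k\}$ that diagonalize $P^*P$ via $P^*P v_k = \mu_k^2 v_k$, and left singular vectors $u_k = \mu_k^{-1}Pv_k = \mu_k^{-1}\chi_i v_k$. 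Invoking \cite[Theorem 2.5]{pinkus1985n} then gives $d_n(\omega_i,\omega_i^*) = \mu_{n+1}$ and $\hat Q(n) = \operatorname{span}\{u_1,\dots,u_n\} = \operatorname{span}\{\chi_i v_1,\dots,\chi_i v_n\}$.

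The remaining task is to recast the operator eigenproblem $P^*P v_k = \mu_k^2 v_k$ as the variational eigenproblem \eqref{eq:3-14}. Setting $\lambda_k = \mu_k^{-2}$, I would write
\begin{equation*}
a_{\omega_i^*}(v_k,\varphi) = \lambda_k\, a_{\omega_i^*}(P^*P v_k,\varphi) = \lambda_k\, a_{\omega_i}(Pv_k,P\varphi) = \lambda_k\, a_{\omega_i}(\chi_i v_k,\chi_i\varphi)
\end{equation*}
for every $\varphi \in H_{A,D}^0(\omega_i^*)$, using the definition of the adjoint in the second step. Then Lemma \ref{lem:3-1} with $u=v_k$, $v=\varphi$, and $\eta = \chi_i$ (which is admissible because $\chi_i$ vanishes on $\partial\omega_i^*\cap\Omega$ by the support property in \eqref{eq:1-3}) converts $a_{\omega_i}(\chi_i v_k,\chi_i\varphi)$ into $\int_{\omega_i}(A\nabla\chi_i\cdot\nabla\chi_i)v_k\varphi\,d\bm{x}$, yielding exactly \eqref{eq:3-14}. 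This gives $d_n(\omega_i,\omega_i^*)=\lambda_{n+1}^{-1/2}$ and the claimed basis.

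The only point that requires care, and which I expect to be the subtle step, is verifying that the hypotheses of Lemma \ref{lem:3-1} are met when applied to $\chi_i$: one must check that $\chi_i$ vanishes on $\partial\omega_i^*\cap\Omega$ (the "interior" part of $\partial\omega_i^*$), so that $\chi_i^2\varphi\in H_{0D}^1(\omega_i^*)$ is a legitimate test function against $v_k\in H_{A,D}^0(\omega_i^*)$. This follows because $\chi_i$ is supported in $\overline{\omega_i}$ and $\omega_i\subseteq\omega_i^*$, so on $\partial\omega_i^*\cap\Omega$ we are outside $\overline{\omega_i}$ and $\chi_i\equiv 0$. With this verification in place, the rest is algebraic manipulation identical to the interior case, so no new estimates beyond Lemma \ref{lem:3-1} are required.
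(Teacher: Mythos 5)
Your proposal is correct and proceeds exactly as the paper intends: the paper omits a separate proof of \cref{thm:3-1}, simply noting that it is obtained "as for interior subdomains," i.e.\ by repeating the proof of \cref{thm:1-3} with $H_{A,0}(\omega_i^*)$, $H_0^1(\omega_i)$, and \cref{lem:1-1} replaced by $H_{A,D}^0(\omega_i^*)$, $H_{0D}^1(\omega_i)$, and \cref{lem:3-1}, respectively. You also correctly identify and verify the one point that is not purely cosmetic, namely that $\chi_i$ vanishes on $\partial\omega_i^*\cap\Omega$ (by the support condition in \cref{eq:1-3} and $\omega_i\subseteq\omega_i^*$), so that \cref{lem:3-1} applies with $\eta=\chi_i$.
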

\begin{rem}
If the domain $\omega_{i}^{\ast}$ only shares a Neumann boundary with $\Omega$, i.e., $\partial \omega_{i}^{\ast}\cap\partial \Omega_{D} =\emptyset$,  then we work on the spaces 
\begin{equation}\label{eq:3-15}
\begin{array}{lll}
{\displaystyle H^{1}_{0N}(\omega^{\ast}_{i}) = \big\{v\in H^{1}(\omega^{\ast}_{i})\;:\;v = 0\;\;{\rm on}\;\,  \partial \omega^{\ast}_{i}\cap \Omega\big\},}\\[2mm]
{\displaystyle H_{A,N}(\omega^{\ast}_{i}) = \big\{ u\in H^{1}(\omega^{\ast}_{i})\;:\; a_{\omega^{\ast}_{i}}(u, v) = 0, \quad \forall v\in H^{1}_{0N}(\omega^{\ast}_{i}) \big\}, }\\[2mm]
{\displaystyle H^{0}_{A,N}(\omega_{i}^{\ast}) = \big\{ u\in H_{A,N}(\omega_{i}^{\ast})\;:\;\mathcal{M}_{\omega_{i}} (u) = 0 \big\}, }
\end{array}
\end{equation}
and \cref{thm:3-1} still holds for this case with $H_{A,D}^{0}(\omega_{i}^{\ast})$ replaced by $H^{0}_{A,N}(\omega_{i}^{\ast})$.
\end{rem}

Proceeding as before, we define the local particular function and the optimal local approximation space on a subdomain $\omega_{i}$ that touches the boundary of $\Omega$.

\begin{theorem}\label{thm:3-1-0}
Let the local particular function and the optimal local approximation space on $\omega_{i}$ for the GFEM be defined as
\begin{equation}
u^{p}_{i}:=\psi_{i}|_{\omega_{i}}\;\;\;{\rm and}\;\;\quad S_{n}(\omega_{i}) := {\rm span}\{v_{1}|_{\omega_{i}},\cdots, v_{n}|_{\omega_{i}}\},
\end{equation}
where $\psi_{i} = \psi_{i}^{r} + \psi_{i}^{d}$ with $\psi_{i}^{r}$ and $\psi_{i}^{d}$ defined in \cref{eq:3-1,eq:3-1-0}, respectively, and $v_{k}$ is the $k$-th eigenfunction of the eigenproblem
\begin{equation}\label{eq:3-15-1}
a_{\omega_{i}^{\ast}}(v, \varphi) = \lambda\,a_{\omega_{i}}(\chi_{i} v, \chi_{i} \varphi)\cm{=\lambda\int_{\omega_i}(A\nabla \chi_{i} \cdot \nabla \chi_{i}) v\varphi\,d{\bm x}},\quad \forall \varphi\in H_{A,D}^{0}(\omega_{i}^{\ast}),
\end{equation}
and let $u_{0}$ be the solution of \cref{eq:1-2}. Then, there exists a $\phi_{i}\in S_{n}(\omega_{i})$ such that
\begin{equation}\label{eq:3-15-2}
\Vert \chi_{i}(u_{0} - u^{p}_{i} - \phi_{i})\Vert_{a,\omega_{i}}\leq d_{n}(\omega_{i},\omega_{i}^{\ast})\,\Vert u_{0}\Vert_{a,\omega_{i}^{\ast}}\quad\; {\rm if}\;\, \partial \omega_{i}^{\ast}\cap \partial \Omega_{D} \neq \emptyset,
\end{equation}
or
\begin{equation}\label{eq:3-15-3}
\Vert \chi_{i}(u_{0} - u^{p}_{i} - \phi_{i})\Vert_{a,\omega_{i}}\leq d_{n-1}(\omega_{i},\omega_{i}^{\ast})\,\Vert u_{0}\Vert_{a,\omega_{i}^{\ast}}\quad\; {\rm if}\;\, \partial \omega_{i}^{\ast}\cap \partial \Omega_{D} = \emptyset.
\end{equation}
\end{theorem}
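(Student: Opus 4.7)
The plan is to mimic the argument of \cref{thm:1-3-0} while using the spaces $H^0_{A,D}(\omega^{\ast}_i)$ (resp.\ $H^0_{A,N}(\omega^{\ast}_i)$) and \cref{thm:3-1} (resp.\ its Neumann analogue) in place of the interior counterparts. Roughly, I first show that $u_0 - \psi_i$ lies in the appropriate $A$-harmonic space, then invoke the $n$-width characterization to produce $\phi_i$, and finally reduce $\|u_0 - \psi_i\|_{a,\omega_i^{\ast}}$ to $\|u_0\|_{a,\omega_i^{\ast}}$.

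\emph{Case 1: $\partial\omega_i^{\ast}\cap\partial\Omega_D\neq\emptyset$.} First I would verify $u_0 - \psi_i \in H^0_{A,D}(\omega_i^{\ast})$. The weak orthogonality against $H^1_{0D}(\omega_i^{\ast})$ is exactly \cref{eq:3-1-1}, while the vanishing of the trace on $\partial\omega_i^{\ast}\cap\partial\Omega_D$ follows from $\psi_i^r = 0$ and $\psi_i^d = q = u_0$ there. \Cref{thm:3-1} then yields $\phi_i\in S_n(\omega_i)$ with
\[
\|\chi_i(u_0-\psi_i-\phi_i)\|_{a,\omega_i} \leq d_n(\omega_i,\omega_i^{\ast})\,\|u_0-\psi_i\|_{a,\omega_i^{\ast}}.
\]
To reach \cref{eq:3-15-2}, it remains to show $\|u_0-\psi_i\|_{a,\omega_i^{\ast}}\leq \|u_0\|_{a,\omega_i^{\ast}}$. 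I would expand
\[
\|u_0-\psi_i\|_{a,\omega_i^{\ast}}^2 = a_{\omega_i^{\ast}}(u_0-\psi_i,\,u_0) - a_{\omega_i^{\ast}}(u_0-\psi_i,\,\psi_i^r) - a_{\omega_i^{\ast}}(u_0-\psi_i,\,\psi_i^d).
\]
The middle term vanishes by \cref{eq:3-1-1} since $\psi_i^r\in H^1_{0D}(\omega_i^{\ast})$. For the last term, the weak formulation of \cref{eq:3-1-0} (with homogeneous Neumann data on both $\partial\omega_i^{\ast}\cap\partial\Omega_N$ and $\partial\omega_i^{\ast}\cap\Omega$) gives $a_{\omega_i^{\ast}}(\psi_i^d,w)=0$ for every $w$ vanishing on $\partial\omega_i^{\ast}\cap\partial\Omega_D$; taking $w=u_0-\psi_i$ produces the desired cancellation. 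Cauchy--Schwarz then closes the estimate.

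\emph{Case 2: $\partial\omega_i^{\ast}\cap\partial\Omega_D=\emptyset$.} Now $\psi_i=\psi_i^r \in H^1_{0N}(\omega_i^{\ast})$, and the analogue of \cref{eq:3-1-1} places $u_0-\psi_i$ in $H_{A,N}(\omega_i^{\ast})$. As in \cref{thm:1-3-0}, I would use the splitting $H_{A,N}(\omega_i^{\ast})=\mathbb{R}\oplus H^0_{A,N}(\omega_i^{\ast})$ to decompose the eigenproblem and write $S_n(\omega_i) = \mathbb{R}\oplus V_{n-1}|_{\omega_i}$, apply the Neumann form of \cref{thm:3-1} to $u_0 - \psi_i - \mathcal{M}_{\omega_i}(u_0-\psi_i) \in H^0_{A,N}(\omega_i^{\ast})$ to obtain the factor $d_{n-1}$, and absorb $\mathcal{M}_{\omega_i}(u_0-\psi_i) \in \mathbb{R}$ into $\phi_i$. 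The stability bound $\|u_0-\psi_i\|_{a,\omega_i^{\ast}}\leq\|u_0\|_{a,\omega_i^{\ast}}$ is now immediate because $\psi_i\in H^1_{0N}(\omega_i^{\ast})$ directly makes $a_{\omega_i^{\ast}}(u_0-\psi_i,\psi_i)=0$, which yields \cref{eq:3-15-3}.

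\emph{Main obstacle.} The delicate step is the cross-term cancellation $a_{\omega_i^{\ast}}(u_0-\psi_i,\psi_i^d)=0$ in Case 1. It hinges on correctly matching the weak formulation of the mixed-boundary problem \cref{eq:3-1-0}: one has to verify that $u_0-\psi_i$ has trace $0$ precisely on the Dirichlet portion $\partial\omega_i^{\ast}\cap\partial\Omega_D$, while the homogeneous Neumann data of $\psi_i^d$ kills all boundary terms on $\partial\omega_i^{\ast}\cap(\Omega\cup\partial\Omega_N)$. Once these traces and the corresponding test-space compatibility are handled, the rest of the argument is a routine adaptation of \cref{thm:1-3-0}.
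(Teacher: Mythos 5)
Your argument reproduces the paper's proof essentially verbatim: you verify $u_0-\psi_i$ lies in the correct $A$-harmonic space, invoke Theorem~\ref{thm:3-1} (with the $\mathbb{R}\oplus H^0_{A,N}$ splitting exactly as in Theorem~\ref{thm:1-3-0} for the pure-Neumann case), and establish the stability bound $\|u_0-\psi_i\|_{a,\omega_i^{\ast}}\leq\|u_0\|_{a,\omega_i^{\ast}}$ by the same two orthogonality observations the paper uses ($\psi_i^r\in H^1_{0D}(\omega_i^{\ast})$ kills the first cross term, and the trace condition $u_0-\psi_i=0$ on $\partial\omega_i^{\ast}\cap\partial\Omega_D$ combined with the weak formulation of \eqref{eq:3-1-0} kills the second). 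The proposal is correct and follows the paper's approach.
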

\begin{rem}
We construct the local approximation space using eigenfunctions of the eigenproblem \cref{eq:3-15-1} for both the case $\partial \omega_{i}^{\ast}\cap \partial \Omega_{D} \neq \emptyset$ and $\partial \omega_{i}^{\ast}\cap \partial \Omega_{D} = \emptyset$, because in the latter case, the space $H_{A,D}^{0}(\omega_{i}^{\ast})$ reduces to $H_{A,N}(\omega^{\ast}_{i})$ defined in \cref{eq:3-15}. The difference of the local approximation errors in \cref{eq:3-15-2,eq:3-15-3} arises since the local approximation space needs to be augmented with the space of constant functions when $\partial \omega_{i}^{\ast}\cap \partial \Omega_{D} = \emptyset$ as for interior subdomains.
\end{rem}

\begin{proof}
Inequalities \cref{eq:3-15-2,eq:3-15-3} can be proved by using a similar argument as in the proof of \cref{thm:1-3-0} and the following inequality
\begin{equation}\label{eq:3-15-4}
\Vert u_{0}-\psi_{i}\Vert_{a,\omega_{i}^{\ast}}\leq \Vert u_{0}\Vert_{a,\omega_{i}^{\ast}}.
\end{equation}
To prove \cref{eq:3-15-4}, we first observe that by definition, it holds that $u_{0}-\psi_{i}\in H_{A,D}(\omega_{i}^{\ast})$ and $\psi_{i}^{r}\in H^{1}_{0D}(\omega^{\ast}_{i})$. Consequently, we have $a_{\omega_{i}^{\ast}}(u_{0}-\psi_{i}, \psi_{i}^{r}) = 0$. In addition, noting that $u_{0}-\psi_{i}$ vanishes on $\partial \omega^{\ast}_{i} \cap \partial \Omega_{D}$, the weak formulation of \cref{eq:3-1-0} implies that $a_{\omega_{i}^{\ast}}(u_{0}-\psi_{i}, \psi_{i}^{d}) = 0$. Hence,
\begin{equation}
\begin{array}{lll}
{\displaystyle a_{\omega_{i}^{\ast}}(u_{0}-\psi_{i}, u_{0}-\psi_{i}) = a_{\omega_{i}^{\ast}}(u_{0}-\psi_{i}, u_{0}-\psi^{r}_{i} - \psi^{d}_{i})}\\[2mm]
{\displaystyle \quad = a_{\omega_{i}^{\ast}}(u_{0}-\psi_{i}, u_{0}) \leq \Vert u_{0}-\psi_{i}\Vert_{a,\omega_{i}^{\ast}} \Vert u_{0}\Vert_{a,\omega_{i}^{\ast}},}
\end{array}
\end{equation}
which gives \cref{eq:3-15-4}. \qquad \end{proof}

To derive an upper bound for the convergence rate of the optimal local approximation, we assume that $\omega_{i}$ and $\omega_{i}^{\ast}$ are concentric truncated cubes with side lengths $H_{i}$ and $H_{i}^{\ast}$ ($H_{i}^{\ast}>H_{i}$), respectively. Under this assumption, we have
\cm{
\begin{theorem}\label{thm:3-2}
There exist $n_{0}>0$ and $b>0$, such that for any $n>n_{0}$,
\begin{equation}
d_{n}(\omega_{i},\omega_{i}^{\ast}) \leq C_{1}e^{-1}e^{-bn^{{1}/{(d+1)}}} e^{-h(\rho)bn^{{1}/{(d+1)}}},
\end{equation}
where $C_{1}$ is again the constant given in \cref{eq:1-3}, $h(s) = 1+{s\log(s)}/{(1-s)}$, and $\rho = H_{i}/H_{i}^{\ast}$.
\end{theorem}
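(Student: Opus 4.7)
The plan is to mirror the proof strategy of \cref{thm:1-4} as faithfully as possible, making modifications to handle the mixed boundary conditions. As in the interior case, it suffices to exhibit an explicit $n$-dimensional subspace $Q(n) \subset H^{1}_{0D}(\omega_{i})$ whose approximation number $d(Q(n),\omega_{i})$ already satisfies the stated bound. I will drop the index $i$ throughout the proof. The basic chain of reasoning will be: (i) identify a spectral approximation space $W_{m}(\omega^{\ast})$ that approximates $H^{0}_{A,D}(\omega^{\ast})$ in $L^{2}$ at the Weyl rate $m^{-1/d}$; (ii) upgrade this to an energy norm estimate via the boundary Caccioppoli identity \cref{lem:3-1}; (iii) iterate over a nested sequence of shrinking truncated cubes; and (iv) optimize $N$ and $m$ exactly as in the interior case.

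For step (i), the key point is to replace the Neumann eigenproblem \cref{eq:2-1} with a mixed-boundary Laplace eigenproblem on the truncated cube $\omega^{\ast}$: Neumann conditions on $\partial\omega^{\ast}\cap\Omega$ and on $\partial\omega^{\ast}\cap\partial\Omega_{N}$, and homogeneous Dirichlet conditions on $\partial\omega^{\ast}\cap\partial\Omega_{D}$. Let $\Psi_{m}(\omega^{\ast})$ denote the span of its first $m$ eigenfunctions. Using the orthogonal decomposition $\{v\in H^{1}(\omega^{\ast}):v=0\text{ on }\partial\omega^{\ast}\cap\partial\Omega_{D}\}=H_{A,D}^{0}(\omega^{\ast})\oplus H_{0D}^{1}(\omega^{\ast})$ with respect to $a_{\omega^{\ast}}(\cdot,\cdot)$, define $W_{m}(\omega^{\ast})=\mathcal{P}^{A}\Psi_{m}(\omega^{\ast})$. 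The Weyl asymptotic for Laplace eigenvalues with mixed boundary conditions on a Lipschitz domain still gives $\lambda_{m}^{-1/2}\leq C(m)H^{\ast}/\sqrt{4\pi}\cdot \gamma_{d}^{1/d}$ with $C(m)=m^{-1/d}(1+o(1))$, so the analogue of \cref{lem:2-1} holds verbatim for $u\in H^{0}_{A,D}(\omega^{\ast})$. Combined with the boundary Caccioppoli inequality \cref{eq:3-5}, which requires only that the cut-off $\eta$ vanish on $\partial\omega^{\ast}\cap\Omega$, this yields the analogue of \cref{lem:2-2} for any such $\eta$ and any $u\in H_{A,D}^{0}(\omega^{\ast})$.

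For step (iii), I will take the same nested family of concentric truncated cubes $\omega=\omega^{N+1}\subset\cdots\subset\omega^{1}=\omega^{\ast}$ with side lengths $H^{\ast}-\delta^{\ast}(j-1)/N$, but now each $\omega^{k}$ is obtained by the same intersection with $\Omega$. The crucial observation is that $\partial\omega^{k}\cap\Omega$ sits strictly inside $\omega^{k-1}\cap\Omega$, so one can choose cut-off functions $\eta_{k}\in W^{1,\infty}(\omega^{k})$ that vanish on $\partial\omega^{k}\cap\Omega$, equal $1$ on $\omega^{k+1}$, and satisfy $|\nabla\eta_{k}|\leq 2N/\delta^{\ast}$; these do not need to vanish on $\partial\omega^{k}\cap\partial\Omega$, which is exactly what \cref{lem:3-1} demands. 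An essential sub-point is that the restriction of a function in $H_{A,D}^{0}(\omega^{k})$ to $\omega^{k+1}$ lies in $H_{A,D}^{0}(\omega^{k+1})$, which follows from the definition of $H_{A,D}^{0}$ together with the fact that the inner boundary of $\omega^{k+1}$ sits inside the interior of $\omega^{k}$. With these adaptations, the iterative construction of \cref{thm:2-1} goes through with the same telescoping bound.

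Step (iv) is then identical to the interior case: choose $m$ via \cref{eq:2-14-0} with the same constant $\Theta$, apply \cref{lem:2-3} to bound the product $\prod_{k=1}^{N-1}(1-k\delta^{\ast}/(NH^{\ast}))$, and set $Q(n)=\chi\,\mathcal{T}(n,\omega,\omega^{\ast})\subset H_{0D}^{1}(\omega)$, which produces the claimed bound. The main obstacle I anticipate is step (i), specifically verifying the Weyl asymptotic for the mixed-boundary Laplacian on a truncated cube: cubes with mixed boundary conditions are Lipschitz but not smooth at the interface between Dirichlet and Neumann portions, so one must appeal to the general Weyl law for self-adjoint elliptic operators on Lipschitz domains rather than the elementary separation-of-variables argument available for the pure Neumann cube. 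Once this is in place, the remaining structural parts of the proof are direct transcriptions of the interior argument.
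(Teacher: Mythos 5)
Your proof is correct but takes a genuinely different route from the paper's, specifically in how the Dirichlet constraint on $\partial\omega^{\ast}\cap\partial\Omega_{D}$ is embedded into the spectral approximation space. The paper reuses the purely Neumann $A$-weighted eigenproblem over all of $H^{1}(\omega^{\ast})$ (see \cref{eq:3-16}), so that $W_{m}(\omega^{\ast})=\mathcal{P}^{A}\Psi_{m}(\omega^{\ast})$ does not yet satisfy the Dirichlet boundary constraint; it then restores it by an $L^{2}$ projection $\mathcal{P}_{0}$ onto the $L^{2}$-closure $\overline{H}^{0}_{A,D}(\omega^{\ast})$. Since this projection only controls $L^{2}$ and not $H^{1}$ regularity, the paper needs the auxiliary \cref{thm:3-3} to show that multiplication by an interior-vanishing cut-off returns such functions to $H^{1}$ and that enough weak $A$-harmonicity survives the $L^{2}$ limit for the Caccioppoli argument to apply. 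You sidestep that entirely by imposing the Dirichlet condition in the eigenproblem itself: your mixed-boundary ($A$-weighted) spectral problem lives on $V:=\{v\in H^{1}(\omega^{\ast}):v=0\ \text{on}\ \partial\omega^{\ast}\cap\partial\Omega_{D}\}$, so that $W_{m}(\omega^{\ast})=\mathcal{P}^{A}\Psi_{m}(\omega^{\ast})\subset H^{0}_{A,D}(\omega^{\ast})$ from the outset, and the $L^{2}$-closure machinery becomes unnecessary. This is a cleaner construction; the price is that you must invoke Weyl asymptotics for the mixed-boundary operator rather than reuse the Neumann ones, which, as you note, is fine on a Lipschitz truncated cube (by form-domain monotonicity the mixed eigenvalues are in fact no smaller than the Neumann ones, so the estimate only improves). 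One small clarification: you call the spectral problem a ``mixed-boundary Laplace eigenproblem,'' but the argument of \cref{lem:2-1} requires the $A$-weighted eigenproblem $a_{\omega^{\ast}}(v_{k},\varphi)=\lambda_{k}(v_{k},\varphi)_{L^{2}(\omega^{\ast})}$ on $V$, so that $a$-orthogonality to $\Psi_{m}(\omega^{\ast})$ transfers to $L^{2}$-orthogonality; the Laplacian should enter only through the comparison principle $\lambda_{m+1}\geq\alpha\mu_{m+1}$ to supply the Weyl rate, exactly as in the paper's Appendix.
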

}

We only give a proof of \cref{thm:3-2} when $\partial \omega^{\ast}_{i} \cap \partial \Omega_{D} \neq \emptyset$. The pure Neumann boundary case can be proved in a similar way as for interior subdomains. For ease of notation, we drop again the subscript index $i$ of subdomains. 

We first introduce the closure of $H^{0}_{A,D}(\omega^{\ast})$ with respect to the $L^{2}(\omega^{\ast})$ norm and denote it by $\overline{H}^{0}_{A,D}(\omega^{\ast})$. Next we consider the following Neumann eigenvalue problem
\begin{equation}\label{eq:3-16}
a_{\omega^{\ast}}(v_{k}, \varphi) = \lambda_{k} \int_{\omega^{\ast}}v_{k}\varphi \,d{\bm x},\quad \forall \varphi \in H^{1}(\omega^{\ast}),\quad k=1,\cdots,m.
\end{equation}
Let $\Psi_{m}(\omega^{\ast})$ denote the subspace spanned by the first $m$ eigenfunctions of \cref{eq:3-16}. By the following orthogonal decomposition of $H^{1}(\omega^{\ast})$
\begin{equation}
H^{1}(\omega^{\ast}) = H_{A,D}(\omega^{\ast})\oplus H_{0D}^{1}(\omega^{\ast}),
\end{equation}
we define $W_{m}(\omega^{\ast}) = \mathcal{P}^{A}\Psi_{m}(\omega^{\ast})$, where $\mathcal{P}^{A}$ is the orthogonal projection from $H^{1}(\omega^{\ast})$ onto $H_{A,D}(\omega^{\ast})$ with respect to the inner product $a_{\omega^{\ast}}(\cdot,\,\cdot)$. Furthermore, to take the boundary conditions into account, we consider the $L^{2}$-projection of $W_{m}(\omega^{\ast})$ onto $\overline{H}^{0}_{A,D}(\omega^{\ast})$ and denote it by $\mathcal{P}_{0}W_{m}(\omega^{\ast})$, where $\mathcal{P}_{0}$ is the $L^{2}$-projection from $L^{2}(\omega^{\ast})$ onto $\overline{H}^{0}_{A,D}(\omega^{\ast})$. As for interior subdomains, we have the following approximation result.
\begin{lemma}\label{lem:3-2}
For any $u\in H_{A,D}^{0}(\omega^{\ast})$, there exists a $v_{u}\in \mathcal{P}_{0}W_{m}(\omega^{\ast})\subset \overline{H}^{0}_{A,D}(\omega^{\ast})$ such that
\begin{equation}\label{eq:3-17}
\Vert u - v_{u}\Vert_{L^{2}(\omega^{\ast})} \leq C(m)H^{\ast}\frac{\gamma_{d}^{1/d}}{\sqrt{4\pi}}\alpha^{-1/2}\Vert u \Vert_{a,\,\omega^{\ast}},
\end{equation}
where $H^{\ast}$ is the side length of the truncated cube $\omega^{\ast}$, $\gamma_{d}$ is the volume of the unit ball in $\mathbb{R}^{d}$, and $C(m)= m^{-1/d}(1+o(1))$.
\end{lemma}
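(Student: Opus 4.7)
The plan is to mirror the proof of \cref{lem:2-1} (carried out in Appendix A.3) and adapt it to the boundary setting. The only genuinely new element is the outer $L^{2}$-orthogonal projection $\mathcal{P}_{0}$ appearing in the target space $\mathcal{P}_{0}W_{m}(\omega^{\ast})$; because $\mathcal{P}_{0}$ is a contraction on $L^{2}(\omega^{\ast})$, applying it at the end to an approximant already lying in $W_{m}(\omega^{\ast})$ does not degrade the $L^{2}$ error.

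Accordingly, I would first establish the boundary analogue of \cref{lem:2-1} at the level of $W_{m}(\omega^{\ast})$: for every $u \in H_{A,D}(\omega^{\ast})$ there exists $w_{u}\in W_{m}(\omega^{\ast})$ with
\begin{equation*}
\Vert u - w_{u}\Vert_{L^{2}(\omega^{\ast})} \leq C(m)\,H^{\ast}\frac{\gamma_{d}^{1/d}}{\sqrt{4\pi}}\alpha^{-1/2}\Vert u \Vert_{a,\,\omega^{\ast}}.
\end{equation*}
The recipe is identical to Appendix A.3: expand $u = \sum_{k\geq 1} c_{k}\phi_{k}$ in the $L^{2}(\omega^{\ast})$-orthonormal Neumann eigenbasis of \cref{eq:3-16} with associated eigenvalues $\{\lambda_{k}\}$, truncate to $\tilde{u}:=\sum_{k\leq m}c_{k}\phi_{k}\in \Psi_{m}(\omega^{\ast})$, and set $w_{u}:=\mathcal{P}^{A}\tilde{u}\in W_{m}(\omega^{\ast})$. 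Parseval together with the identity $\Vert u\Vert_{a,\omega^{\ast}}^{2}=\sum_{k}\lambda_{k}c_{k}^{2}$ gives the $L^{2}$ truncation bound $\Vert u-\tilde{u}\Vert_{L^{2}(\omega^{\ast})}\leq \lambda_{m+1}^{-1/2}\Vert u\Vert_{a,\omega^{\ast}}$. The eigenvalue comparison principle and the ellipticity bound \cref{eq:1-1-0} yield $\lambda_{m+1}\geq \alpha\mu_{m+1}$, where $\mu_{m+1}$ is the $(m{+}1)$-th Neumann eigenvalue of $-\Delta$ on the truncated cube $\omega^{\ast}$, and Weyl's asymptotic law supplies $\mu_{m+1}^{-1/2}\leq (H^{\ast}\gamma_{d}^{1/d}/\sqrt{4\pi})\,C(m)$ with $C(m)=m^{-1/d}(1+o(1))$.

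Specializing to $u\in H^{0}_{A,D}(\omega^{\ast})\subset \overline{H}^{0}_{A,D}(\omega^{\ast})$ and setting $v_{u}:=\mathcal{P}_{0}w_{u}\in \mathcal{P}_{0}W_{m}(\omega^{\ast})$, the identity $\mathcal{P}_{0}u = u$ together with the $L^{2}$-contractivity of $\mathcal{P}_{0}$ yield
\begin{equation*}
\Vert u - v_{u}\Vert_{L^{2}(\omega^{\ast})} = \Vert \mathcal{P}_{0}(u-w_{u})\Vert_{L^{2}(\omega^{\ast})} \leq \Vert u - w_{u}\Vert_{L^{2}(\omega^{\ast})},
\end{equation*}
which is exactly \cref{eq:3-17}.

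The main obstacle is the first step: bounding $\Vert \mathcal{P}^{A}(u-\tilde{u})\Vert_{L^{2}(\omega^{\ast})}$ by the same quantity as $\Vert u - \tilde{u}\Vert_{L^{2}(\omega^{\ast})}$, which is nontrivial because $\mathcal{P}^{A}$ is $a$-orthogonal rather than $L^{2}$-orthogonal and so is not a priori an $L^{2}$-contraction. I would import the technique of Appendix A.3, in which the correction $(I-\mathcal{P}^{A})\tilde{u}\in H^{1}_{0D}(\omega^{\ast})$ solves an elliptic variational problem whose $L^{2}$-norm is controlled by an Aubin--Nitsche-type duality argument, exploiting both the spectral cut-off structure of $\tilde{u}$ and the relation $a_{\omega^{\ast}}(u,\varphi) = 0$ for all $\varphi\in H^{1}_{0D}(\omega^{\ast})$ implied by $u\in H_{A,D}(\omega^{\ast})$. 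Once this technical step is in place, the rest of the proof is routine bookkeeping around the two projections.
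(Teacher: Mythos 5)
Your outer reduction via $\mathcal{P}_0$ is exactly right and matches the paper: since $\mathcal{P}_0 u = u$ for $u\in H^0_{A,D}(\omega^{\ast})$ and $\mathcal{P}_0$ is an $L^2$-contraction, any approximant $w_u\in W_m(\omega^{\ast})$ gives $\Vert u - \mathcal{P}_0 w_u\Vert_{L^2} = \Vert \mathcal{P}_0(u-w_u)\Vert_{L^2}\le \Vert u-w_u\Vert_{L^2}$, which is precisely the chain \cref{eq:3-17-0}--\cref{eq:3-17-1} in the paper. The problem is your inner step, the boundary analogue of \cref{lem:2-1}: the recipe you describe is \emph{not} what Appendix~\ref{sec:A.3} does, and as you yourself flag, it does not close.

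Appendix~\ref{sec:A.3} never expands $u$ in the Neumann eigenbasis and never applies $\mathcal{P}^A$ to a spectral truncation. It chooses $v=\mathcal{P}_m u$, the \emph{$a$-orthogonal projection of $u$ itself} onto $\widetilde{W}_m(\omega^{\ast})$, uses $\Vert u-\mathcal{P}_m u\Vert_{a}\le\Vert u\Vert_{a}$ to pass to the ratio $\Vert u-\mathcal{P}_m u\Vert_{L^2}/\Vert u-\mathcal{P}_m u\Vert_{a}$, and then proves the crucial containment $\widetilde{W}_m^\perp(\omega^{\ast})\subset\Psi_m^\perp(\omega^{\ast})$: since $a_{\omega^{\ast}}(u,\mathcal{P}^A v)=a_{\omega^{\ast}}(u,v)$ for $A$-harmonic $u$, and $a_{\omega^{\ast}}(\cdot,v)=\lambda_k(\cdot,v)_{L^2}$ for $v\in\Psi_m$, $a$-orthogonality to $\widetilde W_m$ implies $L^2$-orthogonality to $\Psi_m$. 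Then the min--max principle directly yields the $\lambda_{m+1}^{-1/2}$ bound; nowhere is $\Vert\mathcal{P}^A\Vert_{L^2\to L^2}$ needed. By contrast, your construction $w_u=\mathcal{P}^A\tilde{u}$ forces you to control $\Vert\mathcal{P}^A(u-\tilde u)\Vert_{L^2}$, and the Aubin--Nitsche fix you propose is both absent from Appendix~\ref{sec:A.3} and unavailable here: such duality arguments require an elliptic regularity gain for the adjoint problem, which fails for $L^\infty$-coefficients on a Lipschitz (truncated-cube) domain --- precisely the rough-coefficient setting the paper targets. This is a genuine gap. Replace the ``truncate then project'' construction by the $a$-orthogonal projection $\mathcal{P}_m u$ and the orthogonality/min--max argument (which adapts verbatim to $H_{A,D}(\omega^{\ast})$ since the orthogonal decomposition $H^1(\omega^{\ast})=H_{A,D}(\omega^{\ast})\oplus H^1_{0D}(\omega^{\ast})$ plays the role of \cref{eq:2-1-0}), and your $\mathcal{P}_0$ step then completes the proof as intended.
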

\begin{proof}
As before, we consider the quantity
\begin{equation}\label{eq:3-17-0}
R= \sup_{u\in H_{A,D}^{0}(\omega^{\ast})} \inf_{v\in \mathcal{P}_{0}W_{m}(\omega^{\ast})} \frac{\Vert u-v\Vert_{L^{2}(\omega^{\ast})}}{\Vert u \Vert_{a,\,\omega^{\ast}}}.
\end{equation}
Noting that $u = \mathcal{P}_{0}u$ and $\Vert \mathcal{P}_{0}v\Vert_{L^{2}(\omega^{\ast})}\leq \Vert v\Vert_{L^{2}(\omega^{\ast})}$ for any $v\in H_{A,D}^{0}(\omega^{\ast})$, we find
\begin{equation}\label{eq:3-17-1}
\begin{array}{lll}
{\displaystyle R = \sup_{u\in H_{A,D}^{0}(\omega^{\ast})} \inf_{\phi \in W_{m}(\omega^{\ast})} \frac{\Vert  \mathcal{P}_{0}(u-\phi)\Vert_{L^{2}(\omega^{\ast})}}{\Vert u \Vert_{a,\,\omega^{\ast}}}}\\[4mm]
{\displaystyle \;\;\quad \leq \sup_{u\in H_{A,D}^{0}(\omega^{\ast})} \inf_{\phi \in W_{m}(\omega^{\ast})} \frac{\Vert  u-\phi\Vert_{L^{2}(\omega^{\ast})}}{\Vert u \Vert_{a,\,\omega^{\ast}}}. }
\end{array}
\end{equation}
Consequently, an upper bound of $R$ can be obtained by following the same lines as in the proof of \cref{lem:2-1}, from which the desired inequality \cref{eq:3-17} follows.\qquad \end{proof}

Useful properties of functions in $\overline{H}^{0}_{A,D}(\omega^{\ast})$ are stated and proved in \cref{thm:3-3} at the end of this section. They play an important role in the proof of the following lemma.
\begin{lemma}\label{lem:3-3}
Assume that $\eta\in W^{1,\infty}(\omega^{\ast})$ satisfies $\eta({\bm x})=0$ on $\partial \omega^{\ast} \cap \Omega$. For any $u\in H_{A,D}^{0}(\omega^{\ast})$, there exists a $v_{u}\in \mathcal{P}_{0}W_{m}(\omega^{\ast})\subset \overline{H}^{0}_{A,D}(\omega^{\ast})$ such that
\begin{equation}\label{eq:3-17-2}
\Vert \eta(u - v_{u})\Vert_{a,\,\omega^{\ast}} \leq C(m)H^{\ast}\frac{\gamma_{d}^{1/d}}{\sqrt{4\pi}}\Big(\frac{\beta}{\alpha}\Big)^{1/2}\Vert \nabla \eta\Vert_{L^{\infty}(\omega^{\ast})} \Vert u \Vert_{a,\,\omega^{\ast}},
\end{equation}
where $H^{\ast}$ is the side length of the truncated cube $\omega^{\ast}$, $\gamma_{d}$ is the volume of the unit ball in $\mathbb{R}^{d}$, and $C(m) = m^{-1/d}(1+o(1))$.
\end{lemma}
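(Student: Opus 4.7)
The proof will mirror the interior case (\cref{lem:2-2}) but handle the complication that the approximation space $\mathcal{P}_0 W_m(\omega^*)$ sits inside the $L^2$-closure $\overline{H}^0_{A,D}(\omega^*)$ rather than inside $H^0_{A,D}(\omega^*)$ itself. The plan is to first pick $v_u$ using the $L^2$-approximation result of \cref{lem:3-2} and then convert the $L^2$-error into an energy error for $\eta(u-v_u)$ via the Caccioppoli-type identity of \cref{lem:3-1}, suitably extended to the closure by invoking \cref{thm:3-3}.

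More precisely, given $u \in H^0_{A,D}(\omega^*)$, apply \cref{lem:3-2} to obtain a $v_u \in \mathcal{P}_0 W_m(\omega^*) \subset \overline{H}^0_{A,D}(\omega^*)$ with
\begin{equation*}
\|u - v_u\|_{L^2(\omega^*)} \;\leq\; C(m) H^* \frac{\gamma_d^{1/d}}{\sqrt{4\pi}} \alpha^{-1/2} \|u\|_{a,\omega^*}.
\end{equation*}
The difference $w := u - v_u$ lies in $\overline{H}^0_{A,D}(\omega^*)$ since both summands do. If $v_u$ were itself in $H^0_{A,D}(\omega^*)$, one could directly apply \cref{lem:3-1} to $w$ (note that the required hypothesis $\eta = 0$ on $\partial\omega^*\cap\Omega$ is satisfied) to get
\begin{equation*}
\|\eta w\|_{a,\omega^*} \;\leq\; \beta^{1/2} \|\nabla \eta\|_{L^\infty(\omega^*)} \|w\|_{L^2(\omega^*)},
\end{equation*}
and combining the two bounds would give \cref{eq:3-17-2} at once.

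To bridge the gap, I will invoke \cref{thm:3-3}, whose role is precisely to provide the continuity/density properties of $\overline{H}^0_{A,D}(\omega^*)$ needed here. Specifically, one argues that the multiplication-by-$\eta$ map extends to a bounded linear operator
\begin{equation*}
M_\eta : \overline{H}^0_{A,D}(\omega^*) \to H^1(\omega^*),\qquad \|M_\eta w\|_{a,\omega^*} \leq \beta^{1/2} \|\nabla \eta\|_{L^\infty(\omega^*)} \|w\|_{L^2(\omega^*)},
\end{equation*}
by approximating any $w \in \overline{H}^0_{A,D}(\omega^*)$ in $L^2$ by a sequence $\{w_n\} \subset H^0_{A,D}(\omega^*)$; \cref{lem:3-1} applied to $w_n - w_k$ shows $\{\eta w_n\}$ is Cauchy in the energy norm, and the bound passes to the limit. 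The properties collected in \cref{thm:3-3} should either supply this density directly or equivalently ensure that the identity in \cref{lem:3-1} survives on the closure.

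With this extension in hand, the proof is immediate: apply the extended Caccioppoli bound to $w = u-v_u$, substitute the $L^2$-estimate from \cref{lem:3-2}, and obtain
\begin{equation*}
\|\eta(u-v_u)\|_{a,\omega^*} \;\leq\; \beta^{1/2} \|\nabla \eta\|_{L^\infty(\omega^*)} \|u - v_u\|_{L^2(\omega^*)} \;\leq\; C(m) H^* \frac{\gamma_d^{1/d}}{\sqrt{4\pi}} \Bigl(\frac{\beta}{\alpha}\Bigr)^{1/2} \|\nabla \eta\|_{L^\infty(\omega^*)} \|u\|_{a,\omega^*}.
\end{equation*}
The only genuinely nontrivial step is the extension of \cref{lem:3-1} to $\overline{H}^0_{A,D}(\omega^*)$; everything else is a routine combination of two previously-established estimates. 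I expect the main obstacle, and the reason \cref{thm:3-3} is postponed to the end of the section, to be the care required in defining $\eta v_u$ as an $H^1$-function when $v_u$ is only known to be an $L^2$-limit of $A$-harmonic functions vanishing on $\partial\omega^*\cap\partial\Omega_D$.
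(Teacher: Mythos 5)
Your proposal is correct and follows essentially the same approach as the paper: pick $v_u$ via \cref{lem:3-2}, then extend the Caccioppoli-type inequality from $H^0_{A,D}(\omega^{\ast})$ to $\overline{H}^0_{A,D}(\omega^{\ast})$ (the role of \cref{thm:3-3}) in order to convert the $L^2$-error into the energy bound on $\eta(u-v_u)$. The Cauchy-sequence argument you sketch for that extension is precisely the core of the proof of \cref{thm:3-3}, so you have merely inlined the step the paper cites.
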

\begin{proof}
First we extend the Caccioppoli-type inequality to functions in $\overline{H}^{0}_{A,D}(\omega^{\ast})$, i.e.,
\begin{equation}\label{eq:3-17-3}
\Vert \eta v \Vert_{a, \omega^{\ast}} \leq \beta^{1/2} \Vert \nabla \eta \Vert_{L^{\infty}(\omega^{\ast})} \Vert v \Vert_{L^{2}(\omega^{\ast})},\quad \forall v\in \overline{H}^{0}_{A,D}(\omega^{\ast}).
\end{equation}
Let $u,\,v\in \overline{H}^{0}_{A,D}(\omega^{\ast})$. By \cref{thm:3-3}, we see that $\eta u\in H^{1}(\omega^{\ast})$ and $a_{\omega^{\ast}}(u,\,\eta^{2}v) = 0$. With the same argument as in the proof of \cref{lem:1-1}, it follows that \cref{eq:1-4-1} holds for any $u,\,v\in \overline{H}^{0}_{A,D}(\omega^{\ast})$, which gives \cref{eq:3-17-3} immediately. Applying \cref{eq:3-17-3} to $u-v_{u}$ and using \cref{lem:3-2}, we obtain \cref{eq:3-17-2}.
%Combining \cref{eq:3-17-5} and the following inequality
%\begin{equation}\label{eq:3-17-6}
%\Vert \eta v \Vert_{a, \omega^{\ast}} \leq \Big(\int_{\omega^{\ast}}(A\nabla v \cdot \nabla v) \eta^{2}\,d{\bm x}\Big)^{\frac12} +  \Big(\int_{\omega^{\ast}}(A\nabla \eta \cdot \nabla \eta) v^{2}\,d{\bm x}\Big)^{\frac12},
%\end{equation}
%we obtain \cref{eq:3-17-3}. Now  follows immediately by 
\qquad \end{proof}

Let $N\geq 1$ be an integer. Proceeding as before, we choose $\omega^{j}$, $j=1,2,\cdots,N,N+1$, to be the nested family of concentric truncated cubes with side length $H^{\ast}-\delta^{\ast} (j-1)/N$ for which $\omega=\omega^{N+1}\subset \omega^{N}\subset\cdots\subset\omega^{1} = \omega^{\ast}$, where $\delta^{\ast} = H^{\ast}-H$. Let $n=(N+1)\times m$ and define
\begin{equation}\label{eq:3-18}
\mathcal{T}(n,\omega,\omega^{\ast}) = \mathcal{P}_{0}W_{m}(\omega^{1})+ \cdots+\mathcal{P}_{0}W_{m}(\omega^{N+1}).
\end{equation}
Similar to \cref{thm:2-1}, we can prove the following convergence rate for the approximation space $\mathcal{T}(n,\omega,\omega^{\ast})$.
\begin{lemma}\label{thm:3-4}
Let $u\in H^{0}_{A,D}(\omega^{\ast})$ and $N\geq 1$ be an integer. Then there exists a $z_{u}\in \mathcal{T}(n,\omega,\omega^{\ast})$ such that
\begin{equation}\label{eq:3-19}
\Vert \chi(u -z_{u}) \Vert_{a,\omega} \leq \frac{C_{1}}{2\sqrt{2}N}\prod_{k=1}^{N-1} \big(1-\frac{k\delta^{\ast}}{NH^{\ast}}\big) \xi^{N+1} \Vert u \Vert_{a,\omega^{\ast}},
\end{equation}
where $C_{1}$ is the positive constant defined in \cref{eq:1-3} and $\xi$ is given by
\begin{equation}\label{eq:3-20}
\xi =\xi(N,m)=N\frac{\gamma_{d}^{1/d}}{\sqrt{\pi}}\Big(\frac{\beta}{\alpha}\Big)^{1/2} \frac{H^{\ast}}{\delta^{\ast}}C(m).
\end{equation}
\end{lemma}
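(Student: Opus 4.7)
The plan is to mirror the inductive argument of Lemma \cref{thm:2-1}, substituting Lemma \cref{lem:3-3} for Lemma \cref{lem:2-2} and working throughout in the boundary-adapted spaces $H^0_{A,D}(\omega^k)$ and their $L^2$-closures $\overline{H}^0_{A,D}(\omega^k)$. The nested truncated cubes $\omega = \omega^{N+1} \subset \cdots \subset \omega^1 = \omega^\ast$ and the approximation space $\mathcal{T}(n,\omega,\omega^\ast)$ are as specified in \cref{eq:3-18}.

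First I would introduce cut-off functions $\eta_k \in W^{1,\infty}(\omega^k)$ for $k=1,\ldots,N$ with $\eta_k \equiv 1$ on $\omega^{k+1}$, $\eta_k \equiv 0$ on the interior boundary portion $\partial \omega^k \cap \Omega$, and $\Vert \nabla \eta_k\Vert_{L^\infty(\omega^k)} \leq 2N/\delta^\ast$. Crucially, no vanishing is imposed on the Dirichlet portion $\partial \omega^k \cap \partial \Omega_D$, which matches exactly the hypothesis of Lemma \cref{lem:3-3}. Starting from $u \in H^0_{A,D}(\omega^\ast) = H^0_{A,D}(\omega^1)$, I would apply Lemma \cref{lem:3-3} on $\omega^1$ with $\eta = \eta_1$ to obtain $v_u^1 \in \mathcal{P}_0 W_m(\omega^1)$ with $\Vert \eta_1(u-v_u^1)\Vert_{a,\omega^1} \leq \xi \Vert u\Vert_{a,\omega^1}$. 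Iterating for $k=2,\ldots,N$ on $\omega^k$, whose side length $H^\ast - (k-1)\delta^\ast/N$ introduces the prefactor $1 - (k-1)\delta^\ast/(NH^\ast)$ into the constant of Lemma \cref{lem:3-3}, produces $v_u^k \in \mathcal{P}_0 W_m(\omega^k)$ satisfying
\begin{equation*}
\Vert \eta_k\big(u - {\textstyle\sum_{j\leq k}} v_u^j\big)\Vert_{a,\omega^k} \leq \big(1 - \tfrac{(k-1)\delta^\ast}{NH^\ast}\big)\xi\,\Vert u - {\textstyle\sum_{j<k}} v_u^j\Vert_{a,\omega^k}.
\end{equation*}
Chaining these estimates through the obvious inequality $\Vert u - \sum_{j<k} v_u^j\Vert_{a,\omega^k} \leq \Vert \eta_{k-1}(u - \sum_{j<k} v_u^j)\Vert_{a,\omega^{k-1}}$ (valid since $\eta_{k-1} \equiv 1$ on $\omega^k$) produces, after $N$ steps, an $a$-norm bound of order $\prod_{k=1}^{N-1}(1 - k\delta^\ast/(NH^\ast))\,\xi^N\,\Vert u\Vert_{a,\omega^\ast}$. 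A final application of Lemma \cref{lem:3-3} on $\omega = \omega^{N+1}$ with the partition-of-unity function $\chi$ in place of $\eta_N$ (using $\chi \equiv 0$ on $\partial \omega \cap \Omega$ and $\Vert \nabla \chi\Vert_{L^\infty} \leq C_1/\mathrm{diam}(\omega)$) contributes the prefactor $C_1/(2\sqrt{2}N)$, since $\Vert \nabla \chi\Vert_{L^\infty}/\Vert \nabla \eta_N\Vert_{L^\infty}$ is of order $1/N$. Setting $z_u = \sum_{k=1}^{N+1} v_u^k \in \mathcal{T}(n,\omega,\omega^\ast)$ then delivers the claimed bound \cref{eq:3-19}.

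The hard part is justifying the reapplication of Lemma \cref{lem:3-3} at each step: the approximants $v_u^k$ live only in the $L^2$-closure $\overline{H}^0_{A,D}(\omega^k)$ and are not a priori $H^1$-functions, yet the inductive bound requires the $a$-norm of $u - \sum_{j<k} v_u^j$ on $\omega^k$, and the lemma itself demands its input to lie in $H^0_{A,D}(\omega^k)$. I would resolve this by invoking Theorem \cref{thm:3-3} (the properties of $\overline{H}^0_{A,D}$ referenced just before Lemma \cref{lem:3-3}): because $\eta_{k-1} \equiv 1$ on $\omega^k$ and $\eta_{k-1} v_u^{k-1}$ is guaranteed to lie in $H^1$, the approximant $v_u^{k-1}$ is in fact $H^1$ on the smaller subdomain $\omega^k$, and since the nested truncated cubes share the Dirichlet portion of the outer boundary, both the zero trace on $\partial \omega^k \cap \partial \Omega_D$ and the $A$-harmonicity on $\omega^k$ are inherited. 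With that verified, the algebraic bookkeeping of the constants $\xi$, $1 - k\delta^\ast/(NH^\ast)$, and $C_1/(2\sqrt{2}N)$ proceeds exactly as in the interior case.
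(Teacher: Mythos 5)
Your proof is correct and follows exactly the inductive strategy the paper has in mind when it simply asserts that this lemma can be proved ``similar to \cref{thm:2-1}.'' In particular, you have correctly identified and resolved the one genuine additional subtlety in the boundary case---that the approximants live a priori only in the $L^{2}$-closure $\overline{H}^{0}_{A,D}$ rather than in $H^{0}_{A,D}$---via the local $H^{1}$-regularity, inherited trace vanishing, and inherited $A$-harmonicity furnished by \cref{thm:3-3}, which is precisely the role that auxiliary lemma plays in the paper's development.
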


\cm{
The proof of \cref{thm:3-2} now follows as before for interior subdomains by recalling the definition of $n_{0}$ and $b$ in \cref{eq:1-5-5-1} to prove that if $n\geq n_{0}$, then
\begin{equation}\label{eq:3-21}
\begin{array}{lll}
{\displaystyle d_{n}(\omega,\omega^{\ast}) \leq \sup_{u\in H^{0}_{A,D}(\omega^{\ast})} \inf_{v\in \mathcal{T}(n,\omega,\omega^{\ast})}\frac {\Vert \chi(u-v)\Vert_{a,\omega}}{\Vert u \Vert_{a,\omega^{\ast}}} }\\[4mm]
{\displaystyle \;\; \qquad \qquad \leq C_{1}e^{-1}e^{-bn^{{1}/{(d+1)}}} e^{-h(\rho)bn^{{1}/{(d+1)}}},}
\end{array}
\end{equation}
where $C_{1}$ is the constant given in \cref{eq:1-3}, $h(s) = 1+{s\log(s)}/{(1-s)}$, and $\rho = H/H^{\ast}$.}

We end this section by stating and proving the following lemma used in the proof of \cref{lem:3-3}.
\begin{lemma}\label{thm:3-3}
Let $u_{\infty}\in \overline{H}^{0}_{A,D}(\omega^{\ast})$. For any open set $\mathcal{O}\subset \omega^{\ast}$ with $dist(\partial \mathcal{O}, \\ \partial \omega^{\ast} \cap \Omega) > 0$, $u_{\infty}\in H^{1}(\mathcal{O})$ and
\begin{equation}\label{eq:3-21-2}
\begin{array}{lll}
{\displaystyle \qquad \quad u_{\infty} = 0\;\;{\rm on}\;\, \partial \mathcal{O} \cap (\partial \omega^{\ast} \cap \partial \Omega_{D}),\quad {\rm if}\;\; \partial \mathcal{O} \cap (\partial \omega^{\ast} \cap \partial \Omega_{D}) \neq \emptyset.}
\end{array}
\end{equation}
In addition, for any $\eta\in {W}^{1,\infty}(\omega^{\ast})$ satisfying $\eta({\bm x}) = 0$ on $\partial \omega^{\ast}\cap \Omega$, $\eta u_{\infty}\in H^{1}(\omega^{\ast})$ and
\begin{equation}\label{eq:3-21-1}
a_{\omega^{\ast}}(u_{\infty},\,\eta^{2}v) = 0,\quad \forall v\in \overline{H}^{0}_{A,D}(\omega^{\ast}).
\end{equation}
\end{lemma}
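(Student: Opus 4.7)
The plan is to pick an approximating sequence $\{u_k\}\subset H^0_{A,D}(\omega^{\ast})$ with $u_k\to u_\infty$ in $L^2(\omega^{\ast})$, and then to exploit the Caccioppoli-type bound in \cref{lem:3-1} to promote this $L^2$-convergence to $H^1$-convergence after multiplication by any admissible cut-off $\eta$.

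First I would observe that for any $\eta\in W^{1,\infty}(\omega^{\ast})$ with $\eta=0$ on $\partial\omega^{\ast}\cap\Omega$, the differences $u_k-u_\ell$ lie in $H^0_{A,D}(\omega^{\ast})$, so \cref{eq:3-5} yields
\begin{equation*}
\Vert \eta(u_k-u_\ell)\Vert_{a,\omega^{\ast}} \le \beta^{1/2}\Vert\nabla\eta\Vert_{L^\infty(\omega^{\ast})}\Vert u_k-u_\ell\Vert_{L^2(\omega^{\ast})}.
\end{equation*}
Since $\eta\in L^\infty$, the sequence $\eta u_k$ is Cauchy in $H^1(\omega^{\ast})$, and by uniqueness of the $L^2$-limit it converges to $\eta u_\infty$; hence $\eta u_\infty\in H^1(\omega^{\ast})$. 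To deduce the local $H^1$-regularity on $\mathcal{O}$, I would choose $\eta\in W^{1,\infty}(\omega^{\ast})$ equal to $1$ on $\mathcal{O}$ and to $0$ on $\partial\omega^{\ast}\cap\Omega$, which is possible precisely because $\mathrm{dist}(\partial\mathcal{O},\partial\omega^{\ast}\cap\Omega)>0$. Then $u_\infty=\eta u_\infty\in H^1(\mathcal{O})$, and since each $u_k$ vanishes on $\partial\omega^{\ast}\cap\partial\Omega_D$ and $\eta u_k\to\eta u_\infty$ in $H^1(\omega^{\ast})$, the continuity of the trace operator yields \cref{eq:3-21-2}.

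The second claim is the more delicate one because $u_\infty$ need not be in $H^1(\omega^{\ast})$, so $a_{\omega^{\ast}}(u_\infty,\eta^2 v)$ requires interpretation. The idea is to rewrite the form, for smooth $u,v$, using the Caccioppoli identity. Starting from \cref{eq:3-4} applied to $u,v\in H^0_{A,D}(\omega^{\ast})$ and expanding $\nabla(\eta u)\cdot\nabla(\eta v)$ by the product rule, one obtains
\begin{equation*}
\int_{\omega^{\ast}}\eta^2 A\nabla u\cdot\nabla v\,d{\bm x} = -\int_{\omega^{\ast}}\eta v\,A\nabla u\cdot\nabla\eta\,d{\bm x}-\int_{\omega^{\ast}}\eta u\,A\nabla v\cdot\nabla\eta\,d{\bm x}.
\end{equation*}
Combined with $\nabla(\eta^2 v)=2\eta v\nabla\eta+\eta^2\nabla v$, a direct manipulation gives
\begin{equation*}
a_{\omega^{\ast}}(u,\eta^2 v)=\int_{\omega^{\ast}}A\nabla\eta\cdot\bigl(v\,\nabla(\eta u)-u\,\nabla(\eta v)\bigr)\,d{\bm x}.
\end{equation*}
The right-hand side continues to make sense for $u,v\in\overline{H}^{0}_{A,D}(\omega^{\ast})$ thanks to the first part of the lemma, so I would adopt it as the definition of $a_{\omega^{\ast}}(u_\infty,\eta^2 v)$.

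Finally, I would verify the orthogonality by a density argument. For $u,v\in H^0_{A,D}(\omega^{\ast})$, the test function $\eta^2 v$ lies in $H^1_{0D}(\omega^{\ast})$ (because $\eta=0$ on $\partial\omega^{\ast}\cap\Omega$ and $v=0$ on $\partial\omega^{\ast}\cap\partial\Omega_D$), so the defining property of $H_{A,D}(\omega^{\ast})$ gives $a_{\omega^{\ast}}(u,\eta^2 v)=0$, hence the right-hand side of the identity above vanishes. Taking sequences $u_k\to u_\infty$, $v_k\to v$ in $L^2(\omega^{\ast})$, the first step of the proof shows $\eta u_k\to\eta u_\infty$ and $\eta v_k\to \eta v$ in $H^1(\omega^{\ast})$; together with $A\nabla\eta\in L^\infty$ and the $L^2$-convergence of $u_k,v_k$, every term in the right-hand integral passes to the limit, giving $a_{\omega^{\ast}}(u_\infty,\eta^2 v)=0$. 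The main obstacle I anticipate is precisely the interpretation issue just described, namely organising the polarised Caccioppoli identity so that every factor that survives the closure lies either in $H^1(\omega^{\ast})$ (for the gradient-bearing factors $\eta u$, $\eta v$) or in $L^2(\omega^{\ast})$ (for the bare factors $u$, $v$), so that the bilinear form and the passage to the limit remain well defined.
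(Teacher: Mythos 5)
Your proof is correct and follows essentially the same strategy as the paper: take an $L^2$-approximating sequence in $H^0_{A,D}(\omega^{\ast})$, use the Caccioppoli bound \cref{eq:3-5} to promote $L^2$-Cauchyness of $\{u_k\}$ to $H^1$-Cauchyness of $\{\eta u_k\}$, use trace continuity for \cref{eq:3-21-2}, and then verify \cref{eq:3-21-1} by passing to the limit in the identity $a_{\omega^{\ast}}(u_k,\eta^2 v_k)=0$. The only point of divergence is cosmetic: for the second assertion, the paper writes $a_{\omega^{\ast}}(u_\infty,\eta^2 v)=a_{\omega^{\ast}}(u_\infty-u_m,\eta^2 v)$ and factors the integrand as $\bigl(\eta A^{1/2}\nabla(u_\infty-u_m)\bigr)\cdot\bigl(2vA^{1/2}\nabla\eta+\eta A^{1/2}\nabla v\bigr)$, whereas you first derive the polarised Caccioppoli identity $a_{\omega^{\ast}}(u,\eta^2 v)=\int_{\omega^{\ast}}A\nabla\eta\cdot\bigl(v\,\nabla(\eta u)-u\,\nabla(\eta v)\bigr)\,d{\bm x}$ on $H^0_{A,D}\times H^0_{A,D}$ and pass to the limit there; both hinge on exactly the same convergences ($v_k\to v$ in $L^2$, $\eta u_k\to\eta u_\infty$ in $H^1$), and your rewriting makes the well-definedness of $a_{\omega^{\ast}}(u_\infty,\eta^2 v)$ slightly more transparent, but the substance is the same.
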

\begin{proof}
By definition, there exists a sequence $\{u_{m}\}_{m=1}^{\infty}\subset {H}^{0}_{A,D}(\omega^{\ast})$ such that $u_{m}\rightarrow u_{\infty}$ in $L^{2}(\omega^{\ast})$ as $m\rightarrow\infty$. Assume that $\mathcal{O}$ is an open subset of $\omega^{\ast}$ with $dist(\partial \mathcal{O}, \;\partial \omega^{\ast} \cap \Omega) > 0$. We introduce a cut-off function $\eta\in W^{1,\infty}(\omega^{\ast})$ satisfying
\begin{equation}
0\leq\eta\leq 1;\quad\; \eta({\bm x}) = 1\quad {\rm in}\;\, \mathcal{O}; \quad\; \eta({\bm x}) = 0 \quad {\rm on}\;\, \partial \omega^{\ast} \cap \Omega.
\end{equation}
For $m$, $l\in \mathbb{N}^{+}$, since $u_{m}-u_{l}$ is $A$-harmonic, applying \cref{eq:3-5} gives 
%and $\eta^{2}(u_{m} - u_{l})\in H^{1}_{0D}(\omega^{\ast})$, we find 
%\begin{equation}\label{eq:3-21-3}
%a_{\omega^{\ast}}(u_{m} - u_{l},\eta^{2}(u_{m} - u_{l})) = 0.
%\end{equation}
%Using a similar argument as in the proof of \cref{lem:1-1}, we get
\begin{equation}\label{eq:3-21-4}
\Vert \eta(u_{m} - u_{l})\Vert_{a,\,\omega^{\ast}} \leq \beta^{\frac12} \Vert \nabla \eta \Vert_{L^{\infty}(\omega^{\ast})} \Vert u_{m} - u_{l}\Vert_{L^{2}(\omega^{\ast})},
\end{equation}
and thus
\begin{equation}
\Vert u_{m} - u_{l} \Vert_{a,\,\mathcal{O}}\leq \beta^{\frac12} \Vert \nabla \eta \Vert_{L^{\infty}(\omega^{\ast})} \Vert u_{m} - u_{l}\Vert_{L^{2}(\omega^{\ast})},
\end{equation}
which implies that $\{u_{m}\}_{m=1}^{\infty}$ is a Cauchy sequence in $H^{1}(\mathcal{O})$. Hence, we have $u_{m}\rightarrow u_{\infty}$ in $H^{1}(\mathcal{O})$ and $u_{\infty} \in H^{1}(\mathcal{O})$. Let $\gamma$ be the trace operator. We have
\begin{equation}
\begin{array}{lll}
{\displaystyle \Vert \gamma u_{\infty}\Vert_{H^{1/2}(\partial \mathcal{O} \cap (\partial \omega^{\ast} \cap \partial \Omega_{D}))} = \Vert  \gamma (u_{\infty} - u_{m}) \Vert_{H^{1/2}(\partial \mathcal{O} \cap (\partial \omega^{\ast} \cap \partial \Omega_{D}))}}\\[2mm]
{\displaystyle\quad \leq\, C \Vert u_{\infty} - u_{m}\Vert_{H^{1}(\mathcal{O})} \rightarrow 0,\quad {\rm as} \;\,m\rightarrow\infty,}
\end{array}
\end{equation}
which yields that $\gamma u_{\infty} = 0$ on $\partial \mathcal{O} \cap (\partial \omega \cap \partial \Omega_{D})$ and thus \cref{eq:3-21-2} is proved. Note that \cref{eq:3-21-4} hold for any $\eta\in {W}^{1,\infty}(\omega^{\ast})$ with $\eta({\bm x}) = 0$ on $\partial \omega^{\ast}\cap \Omega$. Hence, \cref{eq:3-21-4} implies that $\{\eta u_{m}\}_{m=1}^{\infty}$ is a Cauchy sequence in $H^{1}(\omega^{\ast})$ and we see that $\eta u_{\infty}\in H^{1}(\omega^{\ast})$. Now, it remains to prove \cref{eq:3-21-1}. Let $v\in \overline{H}^{0}_{A,D}(\omega^{\ast})$. We see that $\eta^{2} v\in H^{1}_{0D}(\omega^{\ast})$ and consequently,
\begin{equation}\label{eq:3-21-5}
a_{\omega^{\ast}}(u_{m}, \eta^{2} v) =0,\quad \forall m\in \mathbb{N}^{+},\;v\in \overline{H}^{0}_{A,D}(\omega^{\ast}).
\end{equation}
Note that \cref{eq:3-21-1} doesn't follow immediately from \cref{eq:3-21-5} since in general we don't have $\Vert u_{m} - u_{\infty}\Vert_{a,\omega^{\ast}}\rightarrow 0$. To prove \cref{eq:3-21-1}, we first show that
\begin{equation}\label{eq:3-21-6}
\Vert \eta A^{1/2}\nabla (u_{m}-u_{\infty})\Vert_{L^{2}(\omega^{\ast})}\rightarrow 0,\quad {\rm as} \;\,m\rightarrow\infty.
\end{equation}
By triangle inequalities, we have
\begin{equation}\label{eq:3-21-7}
\begin{array}{lll}
{\displaystyle \Vert \eta A^{1/2}\nabla (u_{m}-u_{\infty})\Vert_{L^{2}(\omega^{\ast})} \leq \Vert  (u_{m}-u_{\infty}) A^{1/2}\nabla\eta \Vert_{L^{2}(\omega^{\ast})} }\\[2mm]
{\displaystyle \qquad \qquad +\, \Vert  A^{1/2}\nabla (\eta u_{m}-\eta u_{\infty})\Vert_{L^{2}(\omega^{\ast})}. }
\end{array}
\end{equation}
Now \cref{eq:3-21-6} follows from \cref{eq:3-21-7} and the strong convergence of $\{u_{m}\}_{m=1}^{\infty}$ and $\{\eta u_{m}\}_{m=1}^{\infty}$ in $L^{2}(\omega^{\ast})$ and $H^{1}(\omega^{\ast})$, respectively. A similar argument yields that $\eta A^{1/2} \nabla v \in L^2{(\omega^{\ast})}$ for any $v\in \overline{H}^{0}_{A,D}(\omega^{\ast})$. By \cref{eq:3-21-5}, we find that for any $v\in \overline{H}^{0}_{A,D}(\omega^{\ast})$,
\begin{equation}\label{eq:3-21-8}
\begin{array}{lll}
{\displaystyle a_{\omega^{\ast}}(u_{\infty}, \eta^{2} v) = a_{\omega^{\ast}}(u_{\infty}-u_{m}, \eta^{2} v) }\\[3mm]
{\displaystyle = \int_{\omega^{\ast}}A\nabla(u_{\infty}-u_{m})\cdot (2\eta v\nabla\eta  + \eta^{2} \nabla v)d{\bm x}}\\[4mm] 
{\displaystyle \leq \Vert \eta A^{1/2}\nabla (u_{\infty}-u_{m})\Vert_{L^{2}(\omega^{\ast})} \big(2\Vert vA^{1/2}\nabla \eta\Vert_{L^{2}(\omega^{\ast})} + \Vert \eta A^{1/2} \nabla v\Vert_{L^{2}(\omega^{\ast})}\big),}
\end{array}
\end{equation}
which yields \cref{eq:3-21-1} by applying \cref{eq:3-21-7}. \qquad \end{proof}
\begin{rem}
In general, it is difficult to prove $a_{\omega^{\ast}}(u_{\infty},v) =0$ for all $v\in H^{1}_{0D}(\omega^{\ast})$.
\end{rem}

\section{Numerical implementation}\label{sec-4}
In this section, we discuss the numerical implementation of the multiscale GFEM in detail. Instead of using the partition of unity functions, in the discrete setting we use the local partition of unity operators introduced in \cite{spillane2014abstract} to generate and glue together the local approximation spaces.
Special focus is put on the efficient generation of the discrete $A$-harmonic spaces.

Assume that $\Omega$ is a Lipschitz polygonal (polyhedral) domain. Let $\mathcal{T}_{h}=\{K\}$ be a regular partition of $\Omega$ into triangles (quadrilaterals) in $\mathbb{R}^{2}$ or tetrahedrons (hexahedrons) in $\mathbb{R}^{3}$, where $h=\max_{K\in \mathcal{T}_{h}}\{diam (K)\}$. The mesh-size $h$ is assumed to be small enough to resolve all fine-scale details of the coefficient $A({\bm x})$. Let $V_{h}$ be a conforming finite element space of $H^{1}(\Omega)$ with a basis of piecewise linear functions $\{\varphi_{k}\}_{k=1}^{n}$, where $n$ is the dimension of $V_{h}$. We first partition $\Omega$ into a set of non-overlapping subdomains resolved by $\mathcal{T}_{h}$ and then extend each subdomain by adding several layers of mesh elements to create an overlapping decomposition $\{\omega_{i}\}_{i=1}^{M}$ of $\Omega$.

For each $i=1,\cdots,M$, we define the following finite element spaces on $\omega_{i}$
\begin{equation}\label{eq:4-1}
\begin{array}{lll}
{\displaystyle  {V}_{h}(\omega_{i}) = \big\{v|_{\omega_{i}}\;:\; v\in V_{h}\big\},   }\\[2mm]
{\displaystyle V_{h,0}(\omega_i)= \big\{v|_{\omega_{i}}\;:\; v\in V_{h},\; {\rm supp}(v)\subset \overline{\omega_{i}}\big\} ,}
\end{array}
\end{equation}
as well as the set of internal degrees of freedom in $\omega_{i}$
\begin{equation}\label{eq:4-2}
\tmop{dof}(\omega_i):=\big\{k:\;1\leq k\leq n \;\;{\rm and}\;\;{\rm supp}(\varphi_{k})\subset \overline{\omega_{i}}\big\}.
\end{equation}
Moreover, we denote by $R_{i}^{T}$ the zero extension operator, which extends a function $v\in V_{h,0}(\omega_{i})$ by zero to $V_{h}$. Next we introduce the local partition of unity operators associated with the overlapping partition $\{\omega_{i}\}_{i=1}^{M}$, which are the discrete analog of the partition of unity functions introduced in \cref{eq:1-3}.

\begin{definition}[Partition of unity operators]
For any degree of freedom $k$ ($1\leq k\leq n$), let $\mu_{k}$ denote the number of subdomains for which $k$ is an internal degree of freedom, i.e.,
\begin{equation}\label{eq:4-3}
\mu_{k}:=\#\big\{j\;:\;1\leq j\leq M,\;\;k\in \tmop{dof}(\omega_j)\big\}.
\end{equation}
For each $j=1,\cdots, M$, the local partition of unity operator $\Xi_{j}:V_{h}(\omega_{j})\rightarrow V_{h,0}(\omega_j)$ is defined by
\begin{equation}\label{eq:4-4}
\displaystyle \Xi_{j}(v) :=\sum_{k\in \tmop{dof}(\omega_j)}\frac{1}{\mu_k} v_{k}\varphi_{k}|_{\omega_j} \quad \forall\, v= \sum_{k\geq 1}v_{k}\varphi_{k} \in V_{h}(\omega_{j}).
\end{equation}
\end{definition}
It can be proved \cite{spillane2014abstract} that the operators $\Xi_{j}$ satisfy
\begin{equation}\label{eq:4-5}
\displaystyle \sum_{j=1}^{M} R^{T}_{j}\Xi_{j}(v|_{\omega_j}) = v,\quad \forall v\in V_{h},
\end{equation}
and
\begin{equation}\label{eq:4-6}
\Xi_{j}(v)|_{\omega_{j}\backslash\omega_{j}^{\circ}} = v|_{\omega_{j}\backslash\omega_{j}^{\circ}},\quad \forall v\in V_{h}(\omega_{j}), \;\; j=1,\cdots,M,
\end{equation}
where $\omega_{j}^{\circ} =\{x\in \omega_{j}:\;\exists\, j^{\prime}\neq j\;\;{\rm such \;that}\;\; x\in\omega_{j^{\prime}}\}$ denotes the overlapping zone.

To proceed, we extend each subdomain $\omega_{i}$ by adding several layers of mesh elements to create a larger domain $\omega_{i}^{\ast}$ on which the local particular function and the optimal local approximation space are built. The subdomains $\{\omega^{\ast}_{i}\}$ are usually referred to as the oversampling domains. For each $i=1,\cdots,M$, we define the space of restrictions of functions in $V_{h}$ to $\omega_{i}^{\ast}$ in which homogeneous Dirichlet boundary conditions on $\partial \Omega_{D}$ are incorporated as follows.
\begin{equation}\label{eq:4-7}
\begin{array}{lll}
{\displaystyle  {V}_{h}(\omega^{\ast}_{i}) = \big\{v|_{\omega^{\ast}_{i}}\;:\; v\in V_{h}\big\}, }\\[2mm]
{\displaystyle {V}_{hD}(\omega^{\ast}_{i}) = \big\{v\in V_{h}(\omega_{i}^{\ast}):\; v = 0 \;\;{\rm on}\;\, \partial \omega^{\ast}_{i} \cap \partial \Omega_{D}\big\},  }\\[2mm]
{\displaystyle V_{h,0}(\omega^{\ast}_i)= \big\{v\in {V}_{hD}(\omega^{\ast}_{i}):\; {\rm supp}(v)\subset \overline{\omega^{\ast}_{i}}\big\}, }\\[2mm]
{\displaystyle W_{h}(\omega^{\ast}_i)= \big\{u\in V_{hD}(\omega^{\ast}_i)\;:\; a_{\omega^{\ast}_{i}}(u,v) = 0,\;\, \forall v\in  V_{h,0}(\omega^{\ast}_i)\big\}. }
\end{array}
\end{equation}
We see that functions in $V_{h,0}(\omega^{\ast}_i)$ vanish on $(\partial \omega^{\ast}_{i} \cap \partial \Omega_{D})\cup (\partial \omega^{\ast}_{i} \cap \Omega)$, while $W_{h}(\omega^{\ast}_i)$ denote the discrete $A$-harmonic spaces.

\begin{rem}
For each $i=1,\cdots,M$, $W_{h}(\omega^{\ast}_i)$ is spanned by the $A$-harmonic extensions of the hat functions corresponding to the nodes on the boundary and thus the dimension of $W_{h}(\omega^{\ast}_i)$ is equal to the number of degrees of freedom on $\partial \omega^{\ast}_{i}$. 
\end{rem}

On each subdomain $\omega_{i}$, the local particular function is defined as $u^{p}_{h,i} = (\psi_{h,i}^{r} + \psi_{h,i}^{d})|_{\omega_{i}}$, where $\psi_{h,i}^{r} \in V_{h,0}(\omega_{i}^{\ast})$ satisfies 
\begin{equation}\label{eq:4-9}
a_{\omega_{i}^{\ast}}(\psi^{r}_{h,i}, v) = F_{\omega_{i}^{\ast}}(v),\quad \forall v\in V_{h,0}(\omega^{\ast}_i)
\end{equation}
and $\psi_{h,i}^{d} \in V_{h}(\omega_{i}^{\ast})$ satisfies $\psi_{h,i}^{d}({\bm x})=q({\bm x})$ on $\partial \omega_{i}^{\ast}\cap \partial \Omega_D$ and 
\begin{equation}\label{eq:4-10}
a_{\omega_{i}^{\ast}}(\psi^{d}_{h,i}, v) =0,\quad \forall v\in V_{hD}(\omega^{\ast}_i).
\end{equation}
Note that $\psi_{h,i}^{d}$ vanishes if $\partial \omega_{i}^{\ast} \cap \partial \Omega_{D}=\emptyset$ and $F_{\omega_{i}^{\ast}}(v)=(f,\,v)_{L^{2}(\omega_{i}^{\ast})}$ if $\partial \omega_{i}^{\ast} \cap \partial \Omega_{N}=\emptyset$.

On each subdomain $\omega_{i}$, the local approximation space $S_{h,n_i}(\omega_{i})$ is defined as 
\begin{equation}\label{eq:4-11}
S_{h,n_i}(\omega_{i}) = {\rm span}\big\{\phi_{h,1}|_{\omega_{i}},\cdots,\phi_{h,n_{i}}|_{\omega_{i}}\big\},
\end{equation}
where $\{\phi_{h,j}\}_{j=1}^{n_i}$ are the eigenfunctions corresponding to the $n_{i}$ smallest eigenvalues of the following eigenvalue problem:
\begin{equation}\label{eq:4-12}
a_{\omega^{\ast}_{i}}(\phi,v) = \lambda \,a_{\omega_{i}}(\Xi_{i}(\phi|_{\omega_{i}}), \,\Xi_{i}(v|_{\omega_{i}})),\quad \forall \,v\in {W}_{h}(\omega^{\ast}_i).
\end{equation}
The global particular function and test space for the GFEM are then defined by
\begin{equation}\label{eq:4-13}
\begin{array}{lll}
{\displaystyle u_{h}^{p} :=\sum_{i=1}^{M}R_{i}^{T}\Xi_{i}(u^{p}_{h,i})\;\;\;{\rm and}\;\; \; S_{h}(\Omega):= \Big\{ \sum_{i=1}^{M} R^{T}_{i}\Xi_{i}(v_{i}):\; v_{i}\in S_{h,n_i}(\omega_{i}) \Big\}. }
\end{array}
\end{equation}
The final step of the MS-GFEM algorithm is to solve the problem \cref{eq:1-2-3} on the test space $S_{h}(\Omega)$: Find $u_{h}^{s} \in S_{h}(\Omega)$ such that
\begin{equation}\label{eq:4-14}
a(u_{h}^{s},\,v) = F(v)-a(u^{p}_{h}, v),\quad \forall v\in S_{h}(\Omega),
\end{equation}
and form the approximate solution by $u_{h}^{G}= u_{h}^{p} + u_{h}^{s}$.

Most of the computational work of the original MS-GFEM in \cite{babuska2011optimal,babuvska2020multiscale} lies in the generation of the discrete $A$-harmonic spaces, which in general requires the solution of a large number of local boundary value problems. To get $n_{i}$ eigenfunctions for constructing a local approximation space on $\omega_{i}$, it was suggested in \cite{babuvska2020multiscale} to use an approximation of the discrete $A$-harmonic space $W_{h}(\omega^{\ast}_{i})$ spanned by the $A$-harmonic extension of $\tilde{n}_{i}>n_{i}$ suitably chosen FE functions on $\partial \omega_{i}^{\ast}$. In this paper, we dramatically reduce this cost by solving the Steklov eigenvalue problem associated with the Dirichlet-to-Neumann (DtN) operator on $\partial \omega_{i}^{\ast}$ and use those eigenfunctions to generate the discrete $A$-harmonic spaces. It is worth noting that a similar eigenvalue problem was used to build coarse spaces for two-level additive Schwarz methods \cite{dolean2012analysis}.

We introduce the eigenvalue problems
\begin{equation}\label{eq:4-15}
\left\{
\begin{array}{lll}
{\displaystyle -{\rm div}(A\nabla u) = 0,\;\;\quad {\rm in}\;\, \omega^{\ast}_{j} }\\[2mm]
{\displaystyle {\bm n} \cdot A\nabla u=\lambda u, \quad \quad \;\;{\rm on}\;\,\partial \omega_{j}^{\ast}}
\end{array}
\right.
\end{equation}
and 
\begin{equation}\label{eq:4-16}
\left\{
\begin{array}{lll}
{\displaystyle -{\rm div}(A\nabla u) = 0,\;\;\,\quad {\rm in}\;\, \omega^{\ast}_{j} }\\[2mm]
{\displaystyle {\bm n} \cdot A\nabla u=\lambda u, \quad \quad \;\;{\rm on}\;\,\partial \omega_{j}^{\ast}\cap \Omega}\\[2mm]
{\displaystyle {\bm n} \cdot A\nabla u=0, \quad \qquad \;{\rm on}\;\,\partial \omega_{j}^{\ast}\cap \partial \Omega_{N}}\\[2mm]
{\displaystyle u = 0,\qquad \qquad \qquad \,\;{\rm on}\;\,\partial \omega_{j}^{\ast}\cap \partial \Omega_{D}}
\end{array}
\right.
\end{equation}
for subdomains that lie in the interior of $\Omega$ and those that intersect the boundary of $\Omega$, respectively. In discrete variational form, the eigenvalue problems \cref{eq:4-15,eq:4-16} can be written in a unified way.

\begin{definition}[{Steklov Eigenproblem}]\label{def:1-3}
For each $j=1,\cdots,M$, we define the following eigenvalue problem
\begin{equation}\label{eq:4-17}
a_{\omega^{\ast}_{j}}(\phi,v) = \lambda \,b_{\omega^{\ast}_{j}}(\phi,v),\quad \forall v\in {V}_{hD}(\omega^{\ast}_j),
\end{equation}
where $\displaystyle b_{\omega^{\ast}_{j}}(\phi,v) = \int_{\partial \omega^{\ast}_j\cap \Omega} \phi v\,d{\bm s}$ for all $\phi$, $v\in {V}_{hD}(\omega^{\ast}_j)$.
\end{definition}

The following lemma gives a characterization of the spaces $W_{h}(\omega^{\ast}_j)$ and $V_{h,0}(\omega^{\ast}_j)$ via the eigenfunctions of the eigenvalue problem \cref{eq:4-17}.
\begin{lemma}\label{lem:4-1}
For each $j$, consider the eigenvalue problem \cref{eq:4-17} in \cref{def:1-3}

\vspace{2mm}
$(i)$ There are $N_j = {\rm dim}(W_{h}(\omega^{\ast}_j))$ finite eigenvalues $0\leq \lambda_{1}^{j}\leq  \cdots \leq \lambda_{N_j}^{j}<\infty$ (counted according to multiplicity) with corresponding eigenfunctions $\{\phi_{k}^{j}\}_{k=1}^{N_j}$, which can be normalized to form an orthonormal basis of $W_{h}(\omega^{\ast}_j)$ with respect to $b_{\omega^{\ast}_j}(\cdot,\cdot)$.

\vspace{2mm}
$(ii)$ There are $K_j =  {\rm dim}(V_{h,0}(\omega^{\ast}_j))$ infinite eigenvalues $\lambda_{1}^{j}=\cdots=\lambda_{K_j}^{j}=\infty$ with associated eigenfunctions $\{\varphi_{i}^{j}\}_{i=1}^{K_j}$ forming a basis of $V_{h,0}(\omega^{\ast}_j)$. Here each $\varphi_{i}^{j}$ satisfies
\begin{equation}\label{eq:4-18}
b_{\omega^{\ast}_{j}}(\varphi_{i}^{j},v) = 0,\quad  \forall v\in V_{h,0}(\omega^{\ast}_j)\quad {\rm and}\quad a_{\omega^{\ast}_{j}}(\varphi_{i}^{j}, \phi_{k}^{j})=0,\quad \forall \,k=1,\cdots, N_{j}.
\end{equation}
\end{lemma}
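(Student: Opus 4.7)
The plan is to base everything on the $a_{\omega_j^*}$-orthogonal decomposition $V_{hD}(\omega_j^*) = W_h(\omega_j^*) \oplus V_{h,0}(\omega_j^*)$ together with the observation that the boundary ``mass'' form $b_{\omega_j^*}$ is identically zero on $V_{h,0}(\omega_j^*)$ and positive definite on $W_h(\omega_j^*)$. Once these two structural facts are in hand, in a basis adapted to the decomposition both the stiffness and mass matrices become block-diagonal, so the generalized eigenproblem splits; part (i) then reduces to a textbook generalized symmetric eigenproblem on $W_h(\omega_j^*)$ with positive definite mass, and part (ii) reduces to exhibiting any basis of the kernel of $b_{\omega_j^*}$.

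First I would verify the decomposition. Functions in $V_{h,0}(\omega_j^*)$ vanish on $(\partial\omega_j^* \cap \partial\Omega_D) \cup (\partial\omega_j^* \cap \Omega)$, a subset of $\partial\omega_j^*$ of positive surface measure, so a Poincar\'e--Friedrichs inequality makes $a_{\omega_j^*}$ positive definite on $V_{h,0}(\omega_j^*)$. Defining $W_h(\omega_j^*)$ as the $a$-orthogonal complement of $V_{h,0}(\omega_j^*)$ in $V_{hD}(\omega_j^*)$ matches the definition in (4.7) and gives a direct sum with $N_j + K_j = \dim V_{hD}(\omega_j^*)$. Next I would analyse $b$: for any $v \in V_{h,0}(\omega_j^*)$, $v$ vanishes on $\partial\omega_j^* \cap \Omega$, hence $b_{\omega_j^*}(v,\cdot)\equiv 0$; conversely, if $w \in W_h(\omega_j^*)$ satisfies $b_{\omega_j^*}(w,w) = 0$ then $w = 0$ on $\partial\omega_j^* \cap \Omega$, which combined with $w|_{\partial\omega_j^* \cap \partial\Omega_D} = 0$ puts $w$ into $V_{h,0}(\omega_j^*) \cap W_h(\omega_j^*) = \{0\}$.

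With these structural facts the remainder is essentially algebraic. Testing (4.17) against arbitrary $v \in V_{h,0}(\omega_j^*)$ forces $a_{\omega_j^*}(\phi, v) = 0$ whenever $\lambda$ is finite, so any eigenfunction with a finite eigenvalue lies in $W_h(\omega_j^*)$. Restricting (4.17) to $W_h(\omega_j^*)$ yields a standard generalized symmetric eigenproblem with positive semidefinite stiffness and positive definite mass, whose spectral theorem produces $N_j$ nonnegative eigenvalues $0 \le \lambda_1^j \le \cdots \le \lambda_{N_j}^j < \infty$ together with $b$-orthonormal eigenfunctions $\{\phi_k^j\}$ spanning $W_h(\omega_j^*)$; this proves (i). For (ii), I would take any basis $\{\varphi_i^j\}_{i=1}^{K_j}$ of $V_{h,0}(\omega_j^*)$: each $\varphi_i^j$ lies in the kernel of $b_{\omega_j^*}$, so it qualifies as an eigenvector of the reciprocal pencil $\mu A\phi = B\phi$ with $\mu = 0$, i.e.\ as an ``eigenvalue $\infty$'' solution. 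The two identities in (4.18) are then immediate: $b_{\omega_j^*}(\varphi_i^j, v) = 0$ because $\varphi_i^j = 0$ on $\partial\omega_j^* \cap \Omega$, and $a_{\omega_j^*}(\varphi_i^j, \phi_k^j) = 0$ because $\phi_k^j \in W_h(\omega_j^*)$ is $a$-orthogonal to $V_{h,0}(\omega_j^*)$ by construction.

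The only genuine subtlety is that $b_{\omega_j^*}$ is singular on the full space $V_{hD}(\omega_j^*)$, so the textbook spectral theorem does not apply to the pair $(a, b)$ directly; the decomposition above sidesteps this by completely isolating the kernel of $b$ inside $V_{h,0}(\omega_j^*)$ and reducing (i) to a nondegenerate pencil on $W_h(\omega_j^*)$. As a minor aside worth recording in the proof, $\lambda_1^j = 0$ occurs precisely when constants lie in $W_h(\omega_j^*)$, i.e.\ when $\partial\omega_j^* \cap \partial\Omega_D = \emptyset$; otherwise $\lambda_1^j > 0$ by the Poincar\'e--Friedrichs inequality.
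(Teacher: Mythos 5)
Your proof is correct and follows essentially the same route as the paper: both decompose $V_{hD}(\omega_j^*) = V_{h,0}(\omega_j^*)\oplus W_h(\omega_j^*)$, observe that $a_{\omega_j^*}$ and $b_{\omega_j^*}$ are simultaneously block-diagonal with respect to this splitting (with $b$ vanishing identically on $V_{h,0}$ and positive definite on $W_h$), and then apply the standard spectral theorem on the $W_h$ block for (i) and read off the ``eigenvalue $\infty$'' block on $V_{h,0}$ for (ii). The additional details you include (the Poincar\'e--Friedrichs argument, the observation that finite-eigenvalue eigenfunctions must lie in $W_h$, and the aside on when $\lambda_1^j=0$) are correct refinements but do not alter the underlying argument.
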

\begin{proof}
First we observe that ${V}_{hD}(\omega^{\ast}_{j}) = V_{h,0}(\omega^{\ast}_j) \oplus W_{h}(\omega^{\ast}_j)$. Since $a_{\omega^{\ast}_j}(u,\,v) = b_{\omega^{\ast}_j}(u,\,v) = 0$ for all $u\in V_{h,0}(\omega^{\ast}_j) $ and $v\in W_{h}(\omega^{\ast}_j)$, the eigenproblem \cref{eq:4-17} can be decoupled into two eigenproblems defined on $V_{h,0}(\omega^{\ast}_j)$ and $W_{h}(\omega^{\ast}_j)$ separately. 

Next we show that $b_{\omega^{\ast}_j}(\cdot,\cdot)$ is positive definite on $W_{h}(\omega^{\ast}_j)\times W_{h}(\omega^{\ast}_j)$. Let $v\in W_{h}(\omega^{\ast}_j)$ such that $b_{\omega^{\ast}_j}(v,v) = 0$. By definition, we know that
\begin{equation}
\int_{\partial \omega_{j}^{\ast}\cap \Omega} |w|^{2}d{\bm s} = 0.
\end{equation}
\vspace{-3mm}

Therefore, $v \in V_{h,0}(\omega^{\ast}_j)$. Since $V_{h,0}(\omega^{\ast}_j) \cap W_{h}(\omega^{\ast}_j) = \{ 0\}$, we see that $v=0$ and thus $b_{\omega^{\ast}_j}(\cdot,\cdot)$ is positive definite on $W_{h}(\omega^{\ast}_j)\times W_{h}(\omega^{\ast}_j)$. Now we consider the restriction of \cref{eq:4-17} to $W_{h}(\omega^{\ast}_j)$. Since the bilinear forms $a_{\omega^{\ast}_{j}}(\cdot,\cdot)$ and $b_{\omega^{\ast}_{j}}(\cdot,\cdot)$ are positive semi-definite and positive definite on $W_{h}(\omega^{\ast}_j)\times W_{h}(\omega^{\ast}_j)$, respectively, the generalized eigenproblem \cref{eq:4-17} can be reduced to a standard eigenvalue problem and the assertion $(i)$ follows from standard spectral theory.

To prove $(ii)$, we consider the restriction of \cref{eq:4-17} to $V_{h,0}(\omega^{\ast}_j)$. Note that $a_{\omega^{\ast}_{j}}(\cdot,\cdot)$ is coercive on 
$V_{h,0}(\omega^{\ast}_j)$ and $b_{\omega^{\ast}_{j}}(u,v) = 0$ for all $u,\,v\in V_{h,0}(\omega^{\ast}_j)$. Therefore, all functions in $V_{h,0}(\omega^{\ast}_j) \backslash\{0\}$ are eigenfunctions associated with the eigenvalue $+\infty$ in the sense of \cref{eq:4-18}. In particular, \cref{eq:4-18} holds for any basis of $V_{h,0}(\omega^{\ast}_j)$.\qquad \end{proof}

\Cref{lem:4-1} indicates that the eigenfunctions corresponding to the finite eigenvalues of \cref{eq:4-17} form a basis of $W_{h}(\omega^{\ast}_j)$. Therefore, we can generate the discrete $A$-harmonic spaces by solving the eigenvalue problem \cref{eq:4-17}. In fact, it is not necessary to use all the eigenfunctions. The discrete $A$-harmonic spaces constructed by a handful of eigenfunctions can yield good numerical results in practice. To see this, consider the eigenproblem \cref{eq:4-17} restricted to $W_{h}(\omega^{\ast}_j)$, i.e.,
\begin{equation}\label{eq:4-19}
a_{\omega^{\ast}_{j}}(\phi,v) = \lambda \,b_{\omega^{\ast}_{j}}(\phi,v),\quad \forall v\in W_{h}(\omega^{\ast}_j)
\end{equation}
and denote by $V^{j}_{n} = {\rm span}\{ \phi_{k}^{j}\}_{k=1}^{n}$ the subspace spanned by the eigenfunctions corresponding to the $n$ smallest eigenvalues $\big(\lambda^{j}_{k}\big)_{k=1}^{n}$ of \cref{eq:4-19}. Using the characterization of the Kolmogorov $n$-width of the (compact) trace operator $T:W_{h}(\omega^{\ast}_j) \rightarrow L^{2}(\partial \omega_{j}^{\ast}\cap\Omega)$ and a similar argument as in \cref{sec-3}, it follows that for all $u\in W_{h}(\omega^{\ast}_j)$,
\begin{equation}\label{eq:4-20}
\inf_{v\in V^{j}_{n}}\Vert u -v\Vert^{2}_{b, \, \omega^{\ast}_{j}}\leq \frac{1}{\lambda_{n+1}^{j}}\Vert u \Vert^{2}_{a, \, \omega^{\ast}_{j}}.
\end{equation}
Since all norms on a finite-dimensional space are equivalent, there exists a constant $C$ independent of $n$, but possibly depending on $h$ such that
\begin{equation}\label{eq:4-21}
\inf_{v\in V^{j}_{n}}\Vert u -v\Vert^{2}_{a, \, \omega^{\ast}_{j}}\leq \frac{C}{\lambda_{n+1}^{j}}\Vert u \Vert^{2}_{a, \, \omega^{\ast}_{j}}.
\end{equation}
Therefore, the span of the first $n$ eigenfunctions of \cref{eq:4-19} (also the first $n$ eigenfunctions of \cref{eq:4-17} by \cref {lem:4-1}) can be used as an approximation of $W_{h}(\omega^{\ast}_j)$ and the error is controlled by $1/\lambda_{n+1}^{j}$. Denoting by $\widetilde{\lambda}_{k}^{j}$ the $k$-th eigenvalue of the continuous Steklov eigenproblems \cref{eq:4-15} or \cref{eq:4-16} and using the minimax principle and eigenvalue asymptotics for Stekolv eigenproblems \cite[Chapter VI]{Courant1989}, we get
\begin{equation}\label{eq:4-22}
1/\lambda_{n+1}^{j} \leq 1/\widetilde{\lambda}_{n+1}^{j}\rightarrow 0 \quad {\rm as}\quad n\rightarrow \infty.
\end{equation}
\cm{The combination of \cref{eq:4-21,eq:4-22} provides some intuition as to why the span of a few Steklov eigenfunctions can be used as an approximation of the discrete $A$-harmonic space. A complete and rigorous justification of the use of Steklov eigenproblems is left for future work.}

%Our rule of thumb is that if one wants to construct a local space with $n_{i}$ spectral basis functions, building the corresponding discrete $A$-harmonic subspace with $3n_{i}\sim 4n_{i}$ eigenfunctions of \cref{eq:4-17} are usually capable to produce good numerical results.

We conclude this section by outlining the main steps of the MS-GFEM algorithm.
\begin{enumerate}
\item Create a fine FE mesh over the entire domain $\Omega$ and define an overlapping decomposition $\{\omega_{i}\}_{i=1}^{M}$ of $\Omega$ resolved by the mesh, which is then extended to a decomposition into larger domains $\{\omega^{\ast}_{i}\}_{i=1}^{M}$ with $\omega_{i}\subset\omega_{i}^{\ast}$.

\vspace{1mm}
\item For $i=1,\cdots,M$, 
\begin{itemize}
\item Solve \cref{eq:4-9} on the oversampling domain $\omega_{i}^{\ast}$ to get the local particular function $\psi_{h,i}^{r} \in V_{h,0}(\omega_{i}^{\ast})$, as well as \cref{eq:4-10} if $\partial \omega_{i}^{\ast} \cap \partial \Omega_{D} \neq \emptyset$.

\vspace{1mm}
\item Solve the eigenproblem \cref{eq:4-17} on the oversampling domain $\omega_{i}^{\ast}$ to construct a subspace of the discrete $A$-harmonic space $W_{h}(\omega_{i}^{\ast})$.

\vspace{1mm}
\item Solve the eigenproblem \cref{eq:4-12} over the constructed discrete $A$-harmonic subspace to build the local approximation space on $\omega_{i}$.
\end{itemize}

\vspace{1mm}
\item Build the global particular function and the global test space via \cref{eq:4-13} and then solve \cref{eq:4-14} to get the approximate solution.
\end{enumerate}
\vspace{2mm}
It is important to note that in step 2, which contains by far the bulk of the computational work of the algorithm, all steps can be performed fully in parallel without any communication. This is one of the main merits of the MS-GFEM.

\section{Numerical examples}\label{sec-5}
\begin{figure}[!htbp]
\centering
\includegraphics[scale=0.29]{./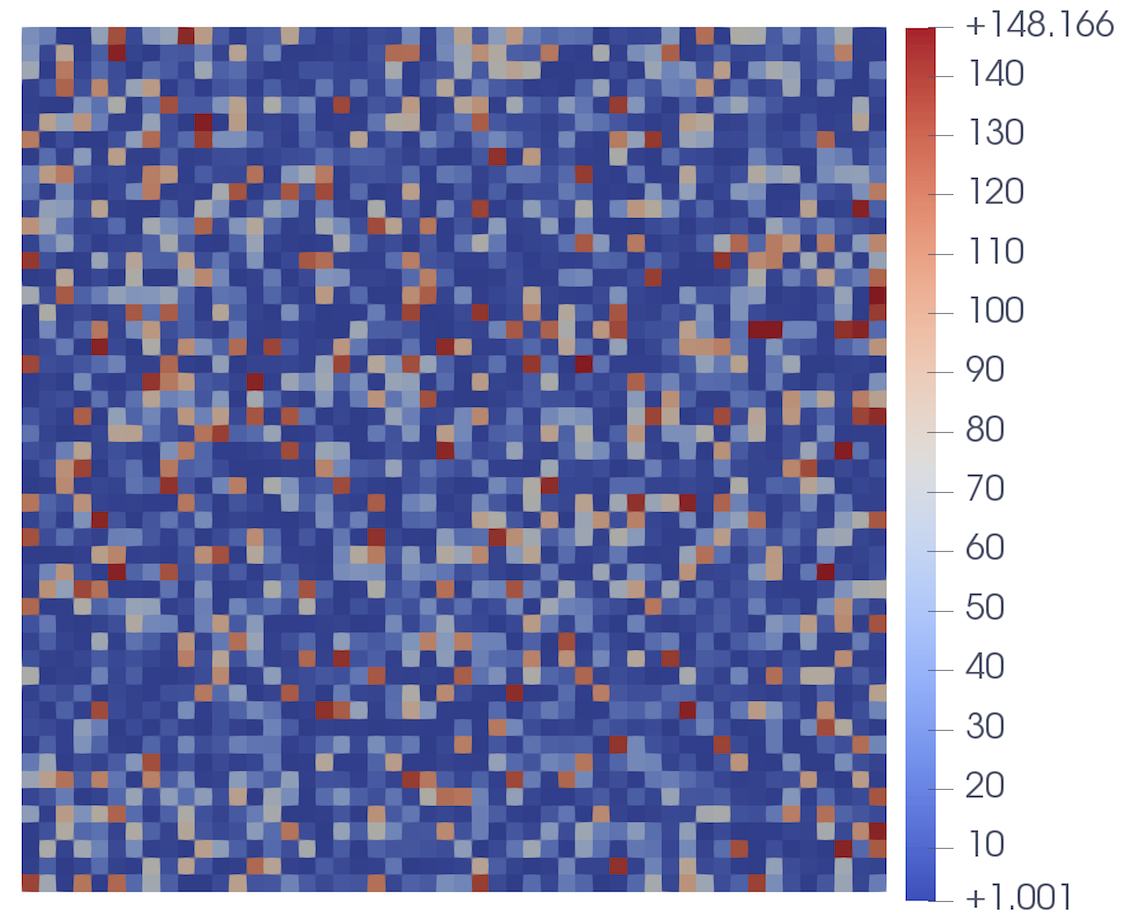}~
\includegraphics[scale=0.32]{./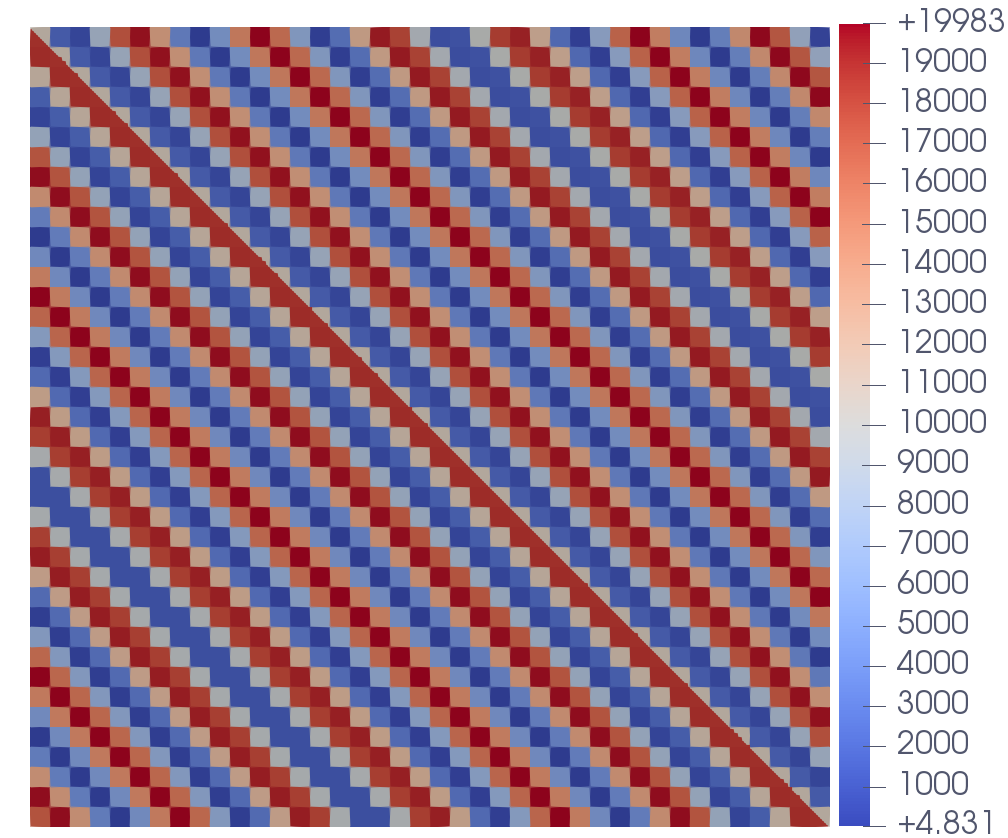}
\caption{\cm{The coefficient $A({\bm x})$ for the random field example (left) and the high contrast example (right).}}\label{fig5-1}
\end{figure}
In this section, we perform numerical experiments to support our theoretical analysis and demonstrate the effectiveness of our method. We consider the following problem on the domain $\Omega=[0,1]^{2}$:
\begin{equation}\label{eq:5-1}
\left\{
\begin{array}{lll}
{\displaystyle -{\rm div}(A({\bm x})\nabla u({\bm x})) = f({\bm x}),\;\quad {\rm in}\,\;\Omega }\\[2mm]
{\displaystyle {\bm n} \cdot A({\bm x})\nabla u({\bm x})=-1, \qquad \quad \;\;{\rm on}\;\,\partial \Omega_{N}}\\[2mm]
{\displaystyle u({\bm x}) = 1, \;\;\;\qquad \qquad \qquad \qquad {\rm on}\;\,\partial \Omega_{D},}
\end{array}
\right.
\end{equation}
where $\partial \Omega_{N} = \big\{(x_{1}, x_{2})\in \Omega: x_{2}=0 \,\;{\rm or}\;\, x_{2}=1\big\}$, $\partial \Omega_{D} = \big\{(x_{1}, x_{2})\in \Omega: x_{1}=0 \,\;{\rm or}\;\, x_{1}=1\big\}$. For the coefficient $A({\bm x})$ and the source term $f({\bm x})$, we consider the following two examples:

\begin{itemize}
\item \textbf{A random field example}. The coefficient $A({\bm x})$ is chosen to be a scalar piecewise constant function varying at a scale of $1/50$ with values taken from a random variable as illustrated in \cref{fig5-1} (left). The source term $f({\bm x})$ is given by
\begin{equation*}
f({\bm x}) = 10^{3}\times\exp\big(-10(x_{1}-0.35)^{2}-10(x_{2}-0.55)^{2}\big).
\end{equation*}

\item \cm{\textbf{A high contrast example}. The coefficient $A({\bm x})$ is chosen to be 
\begin{equation*}
A({\bm x}) = 10^{4} - 4.0 + 10^{4}\times \sin\big(\frac{\pi}{4}+\lfloor x_{1}+x_{2}\rfloor + \lfloor \frac{x_{1}}{\varepsilon}\rfloor + \lfloor \frac{x_{2}}{\varepsilon}\rfloor\big)
\end{equation*}
with $\varepsilon = 1/40$ as illustrated in \cref{fig5-1} (right), which exhibits a high-contrast feature and a multiscale structure. The source term $f({\bm x})$ is given by 
\begin{equation*}
f({\bm x}) = 10^{4}\times\exp\big(-10(x_{1}-0.5)^{2}-10(x_{2}-0.5)^{2}\big).
\end{equation*}
\vspace{-2ex}
}
\end{itemize}

The fine mesh is defined on a uniform Cartesian grid with $h=1/400$ for two examples on which all local computations are performed. The domain is first partitioned into $M=m^{2}$ square non-overlapping domains resolved by the mesh, and then overlapped by 2 layers of mesh elements to form an overlapping decomposition $\{ \omega_{i}\}$. Each overlapping subdomain $\omega_{i}$ is extended by $\ell$ layers of mesh elements to create a larger domain $\omega_{i}^{\ast}$ on which the local approximation space is built such that $\delta^{\ast}=2\ell h$. We use $s$ eigenfunctions of the Steklov eigenproblem \cref{eq:4-17} to build the discrete $A$-harmonic space on each oversampling domain $\omega_{i}^{\ast}$ and then construct the local approximation space for the GFEM by $n_{\rm loc}$ eigenfunctions of the eigenproblem \cref{eq:4-12}. Since no analytical solution of \cref{eq:5-1} is available, the standard finite element approximation $u_{h}$ on the fine mesh is considered as the reference solution. The error between the reference solution $u_{h}$ and the GFEM approximation $u^{G}_{h}$ is defined as 
\begin{equation}
\mathbf{error} := \frac{\Vert u_{h} - u_{h}^{G}\Vert_{a}}{\Vert u_{h}\Vert_{a}}.
\end{equation}

\begin{figure}
\begin{center}
\includegraphics[scale=0.32] {./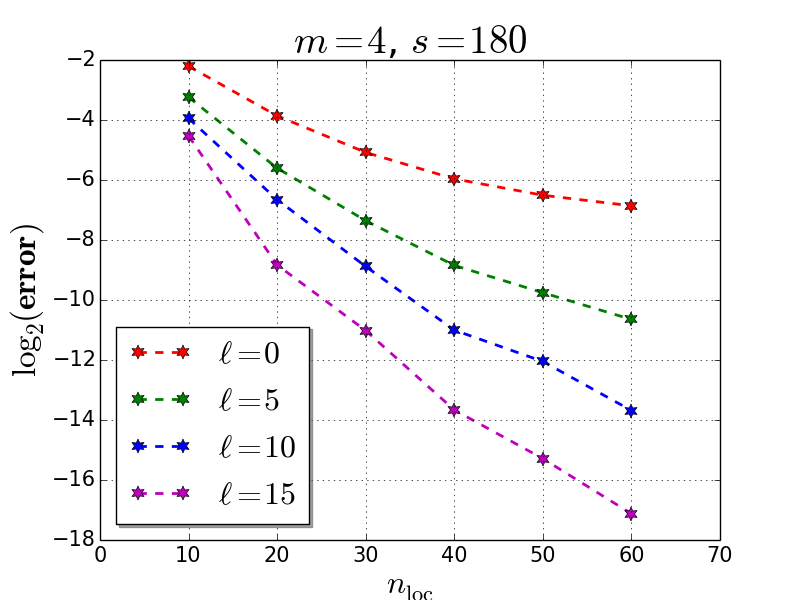}~\hspace{-5mm}
\includegraphics[scale=0.32] {./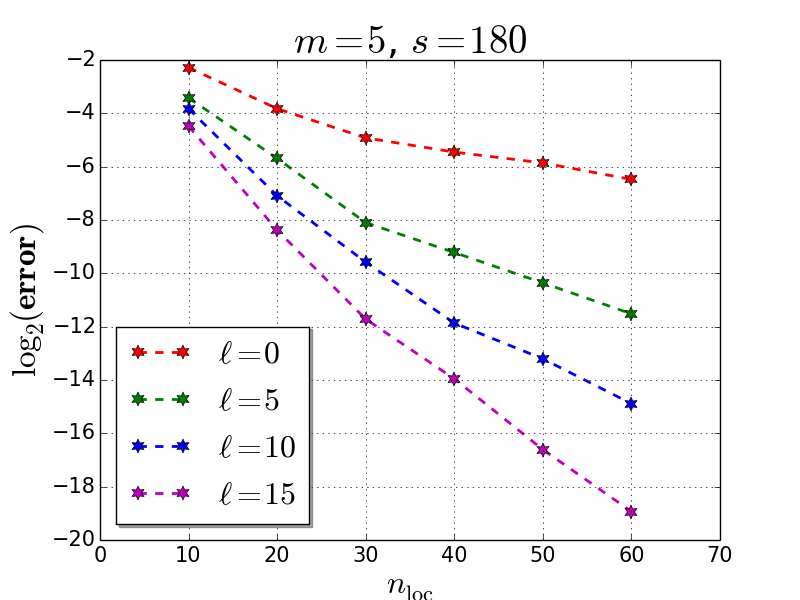}
\caption{Plots of $\log_{2}(\mathbf{error})$ against $n_{\rm loc}$ for the random field example (left) and the high contrast example (right). $n_{\rm loc}$ is the dimension of local spaces. }\label{fig5-2}
\end{center}
\end{figure}

\begin{figure}
\begin{center}
\includegraphics[scale=0.32] {./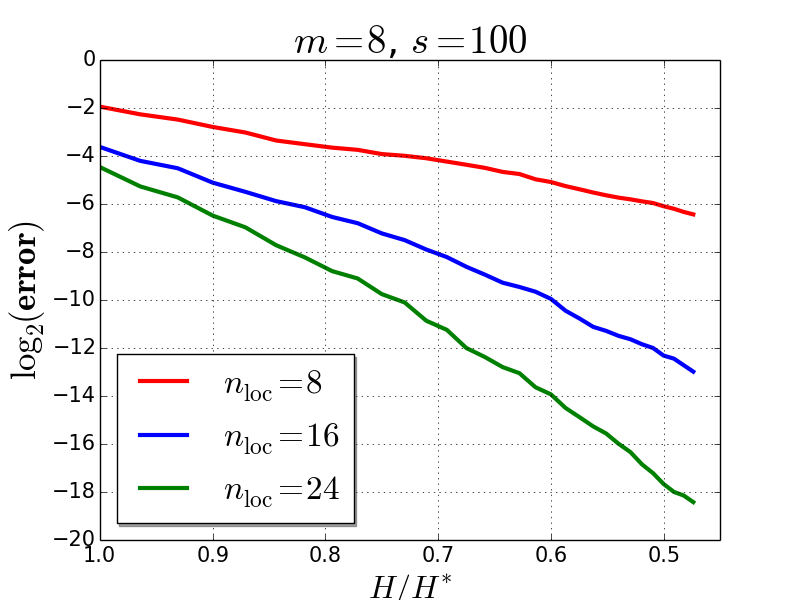}~\hspace{-5mm}
\includegraphics[scale=0.32] {./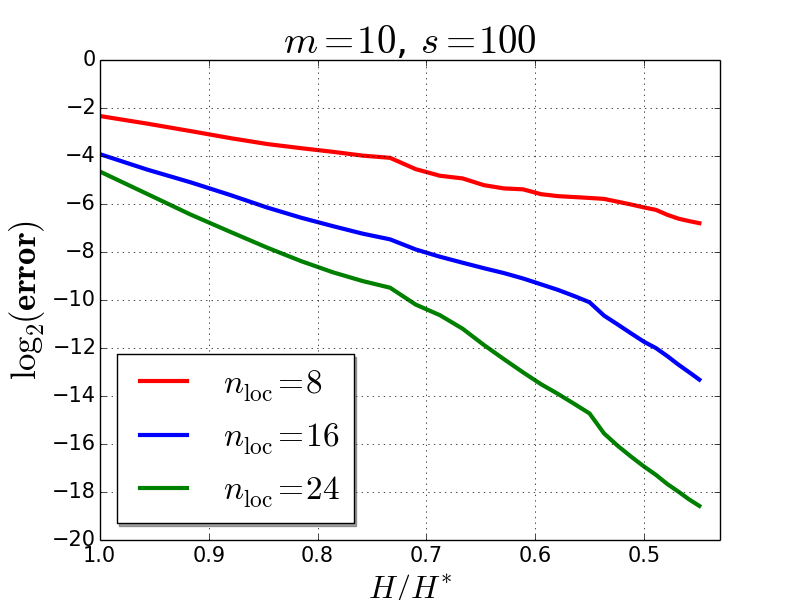}
\caption{Plots of $\log_{2}(\mathbf{error})$ against ${H}/{H}^{\ast}$ for the random field example (left) and the high contrast example (right). $H$ and ${H}^{\ast}$ represent the side lengths of the subdomains and the oversampling domains, respectively.}\label{fig5-3}
\end{center}
\end{figure}

%\begin{figure}
%\begin{center}
%\includegraphics[scale=0.4] {./new_fig/coarse_O.png}
%\caption{Plot of $\log_{2}(\mathbf{error})$ against $\mathcal{H}/\mathcal{H}^{\ast}$ for different values of $\mathcal{M}$. $\mathcal{H}$ and $\mathcal{H}^{\ast}$ represent the sizes of subdomains and oversampling domains, respectively. $\mathcal{N} = 8$, $\mathcal{D} = 100$.}\label{fig5-3}
%\end{center}
%\end{figure}

In \cref{fig5-2}, we plot the errors as functions of the dimension of local spaces for different oversampling sizes for the two examples on a semilogarithmic scale. We clearly see that the errors of both examples drop significantly with increasing oversampling sizes for a fixed $n_{\rm loc}$ and that the rate of convergence with respect to $n_{\rm loc}$ is higher with a larger oversampling size. This verifies our theoretical analysis. Moreover, we observe that even without oversampling ($\ell=0$), our method still converges. 

Next we test our method with different oversampling sizes $\ell$ to see how the error varies with $H/H^{\ast}$, where $H$ and $H^{\ast}$ defined by
\begin{equation}
H=\frac{\lceil \frac{400}{m}\rceil + 4}{400}\quad {\rm and}\quad H^{\ast}=\frac{\lceil \frac{400}{m}\rceil + 4 + 2\ell}{400}
\end{equation}
represent the side lengths of the subdomains $\{\omega_{i}\}$ and the oversampling domains $\{\omega^{\ast}_{i}\}$, respectively. In \cref{fig5-3}, the errors are plotted against $H/H^{\ast}$ for the two examples again on a semilogarithmic scale. We find that the rate of convergence of the error for a fixed $n_{\rm loc}$ is nearly exponential with respect to $H/H^{\ast}$ and the convergence rate is higher with larger $n_{\rm loc}$, which agrees well with our analysis; see \cref{eq:1-5-7}.
\begin{figure}
\centering
\includegraphics[scale=0.32] {./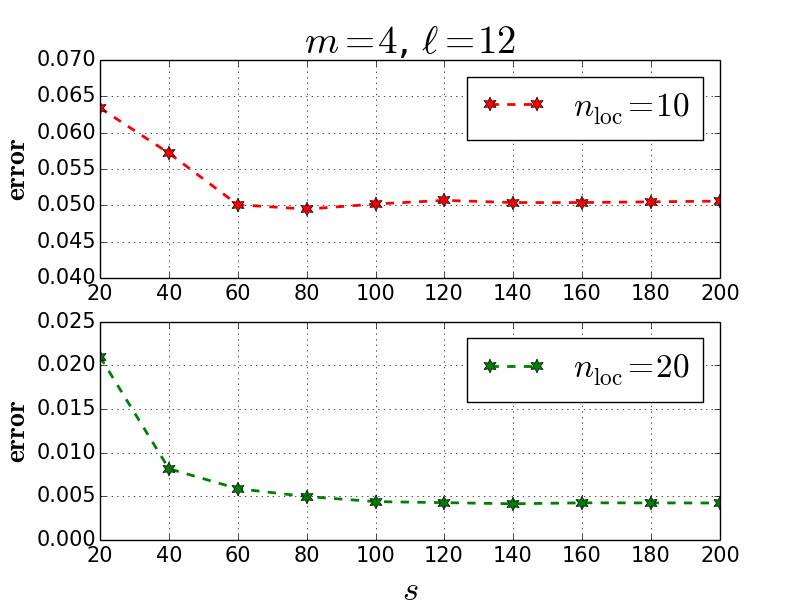}~\hspace{-5mm}
\includegraphics[scale=0.32]{./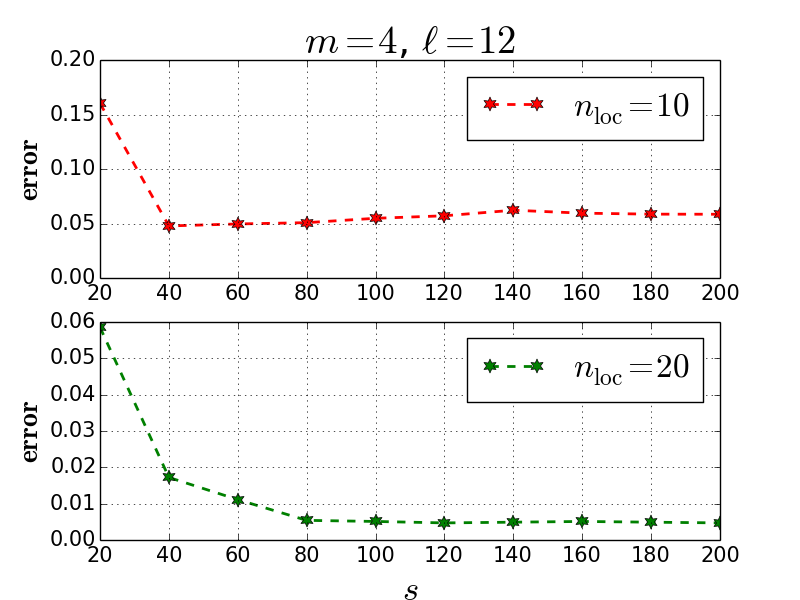}
\caption{Plots of $\mathbf{error}$ against the number of discrete $A$-harmonic basis functions used for the random field example (left) and the high contrast example (right).}\label{fig5-4}
\end{figure} 

To test our method of generating the discrete $A$-harmonic spaces via the Steklov eigenproblems, we let the number $n_{\rm loc}$ of eigenfunctions used for constructing each local space fixed and vary the number $s$ of discrete $A$-harmonic basis functions used. The errors are plotted in \cref{fig5-4} with $m=4$ and $\ell=12$ for the two examples. \cm{At first, the number of Steklov eigenfunctions used is very small and the error arising from the approximation of the discrete $A$-harmonic spaces dominates. When sufficiently many Steklov eigenfunctions are used, the error from approximating the discrete $A$-harmonic spaces is negligible, leading to the horizontal asymptotes in \cref{fig5-4}.} In this case, the true dimension of the discrete $A$-harmonic space $W_{h}(\omega_{i}^{\ast})$ is about 500. We see that a small number of discrete $A$-harmonic basis functions are capable to produce good numerical results for both examples. 

Finally, we plot the reference solutions $u_{h}$ and the errors $|u_{h}^{G} -u_{h}|$ (as a field) for the two examples in \cref{fig5-5}, where the multiscale approximate solutions $u_{h}^{G}$ are both computed with $n_{\rm loc}=20$, $\ell=10$, $m=4$, and $s = 80$. It can be observed that in this computational setting the multiscale approximate solutions agrees very well with the reference solutions for both examples. Furthermore, the error of the high contrast example is even smaller than that of the random field example, which demonstrates the effectiveness of our method for high-contrast problems.

%\begin{figure}[!htbp]
%\centering
%\includegraphics[scale=0.32]{./new_fig/p_sol.png}~
%\includegraphics[scale=0.32]{./new_fig/homo_sol.png}
%\caption{The global particular solution $u^{p}_{h}$ (left) and the homogeneous solution $u^{s}_{h}$ (right).}\label{fig5-6}
%\end{figure}

\begin{figure}[!htbp]
\centering
\includegraphics[scale=0.32]{./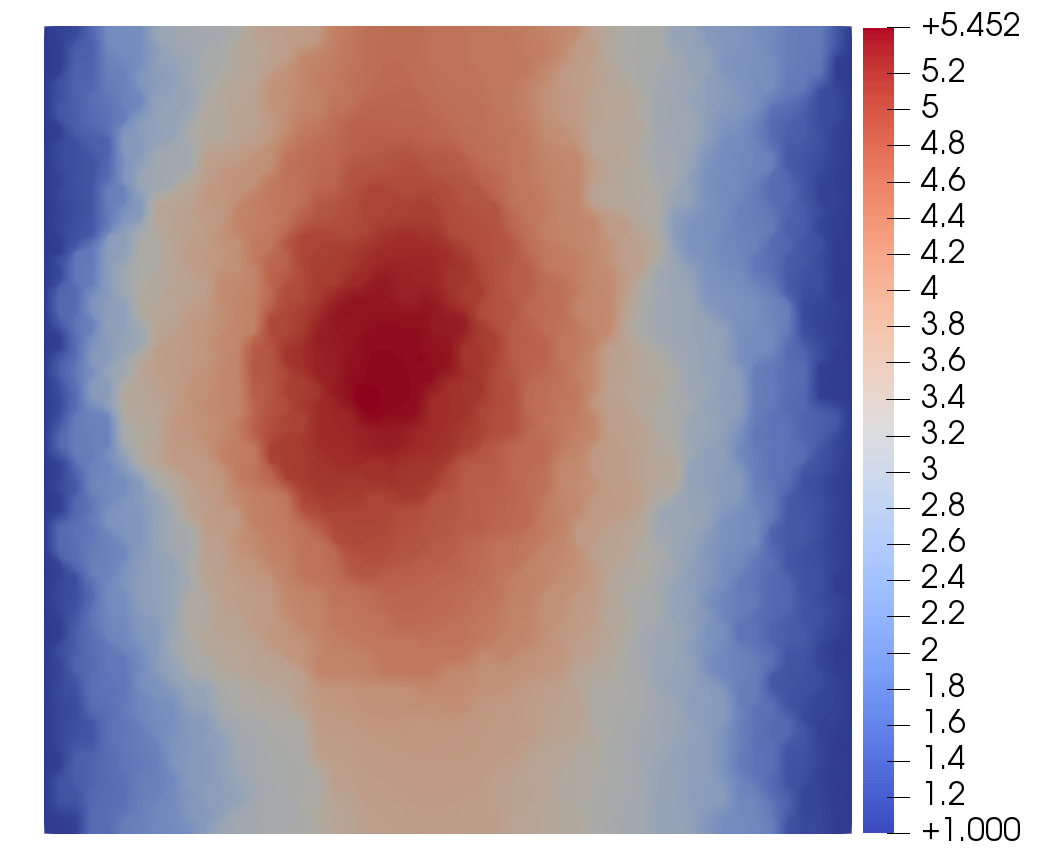}~
\includegraphics[scale=0.325]{./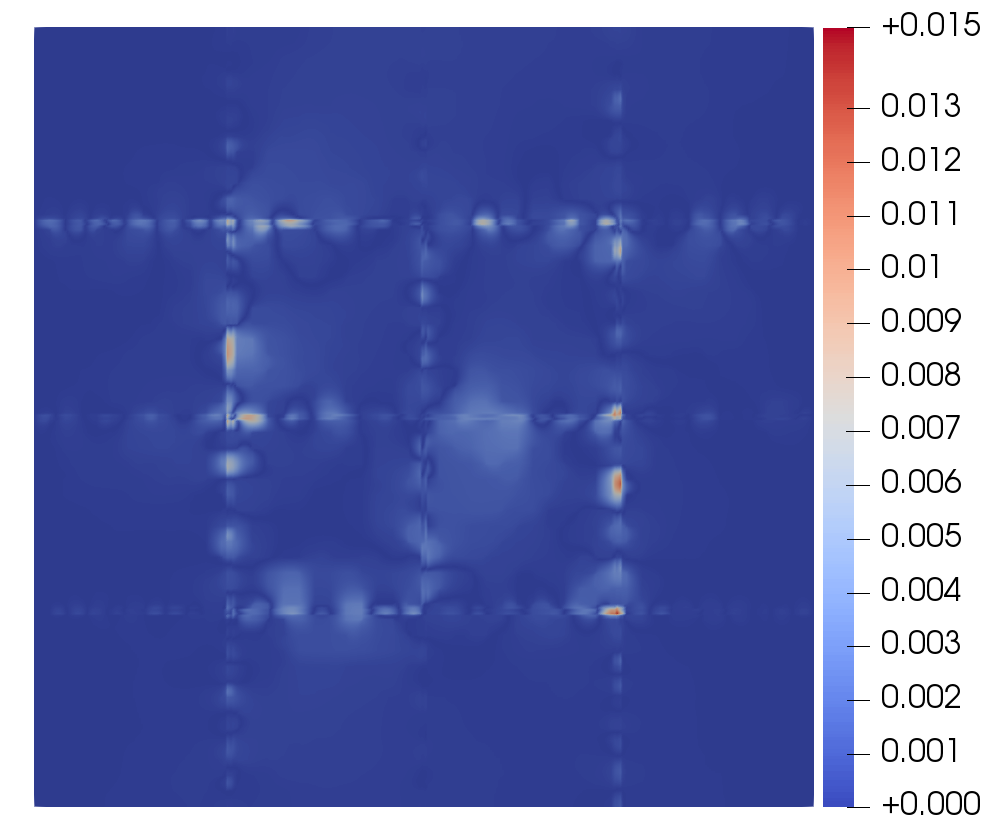}
\hspace{5mm}
\includegraphics[scale=0.32]{./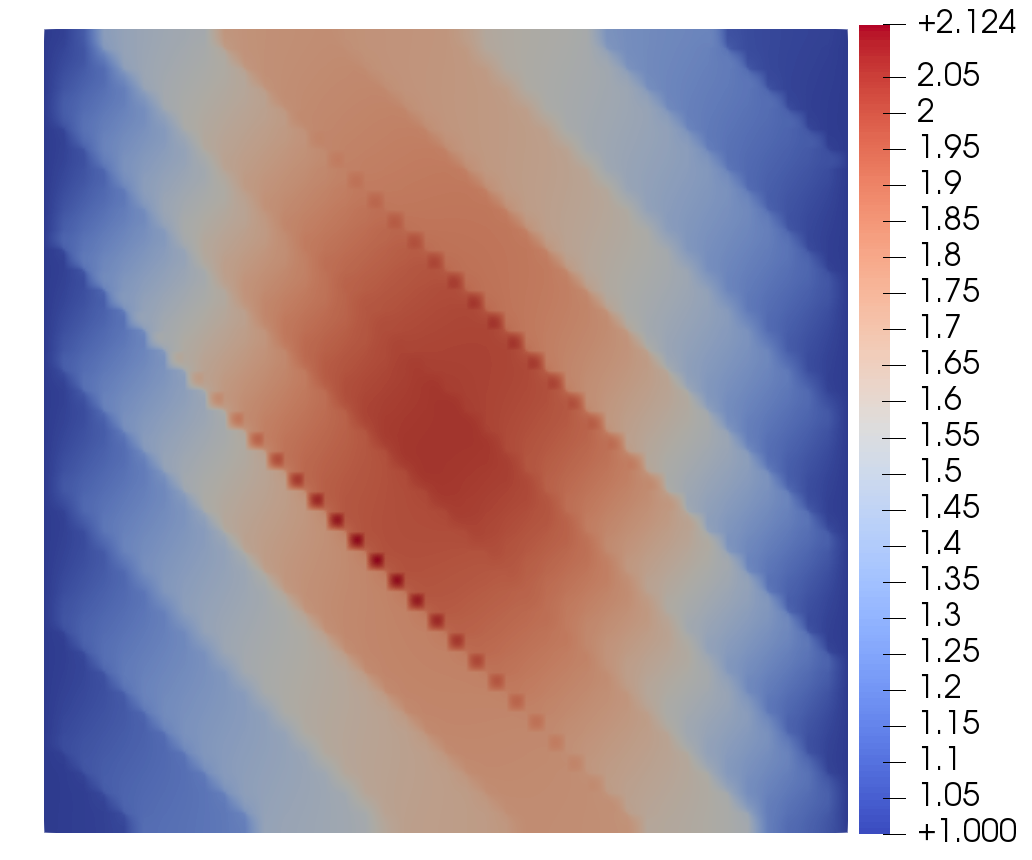}~
\includegraphics[scale=0.325]{./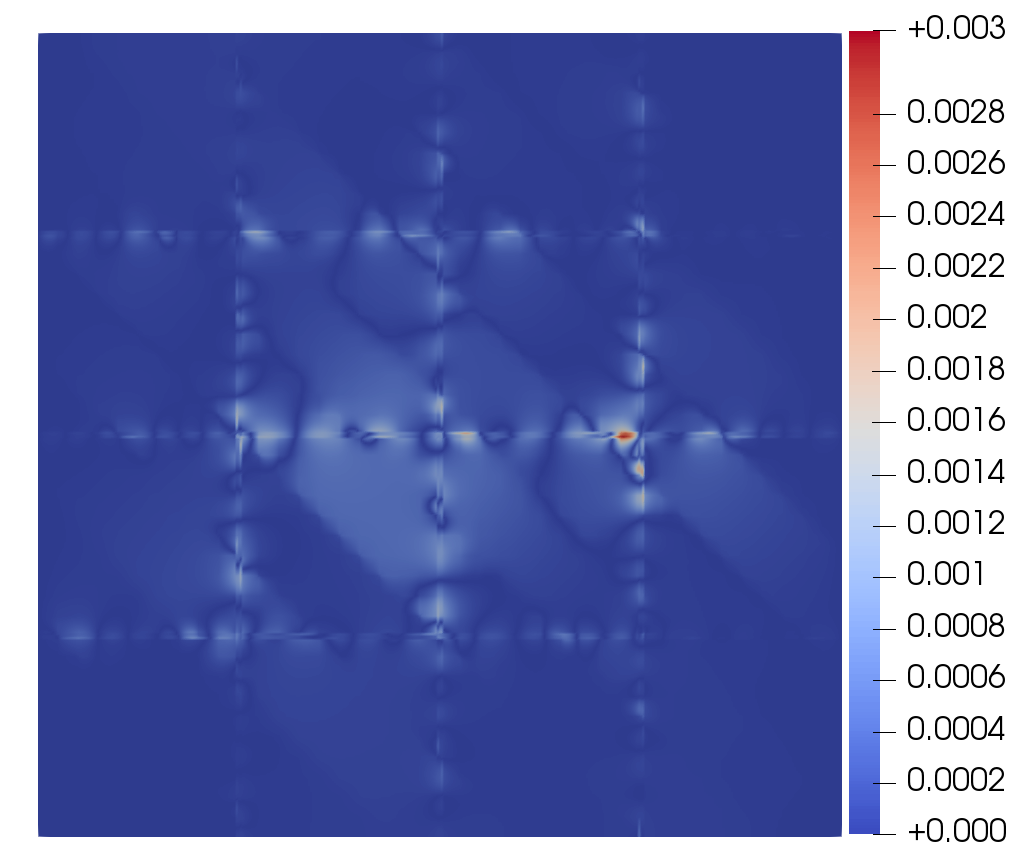}
\caption{The reference solution $u_{h}$ (left) and the error $|u_{h}-u_{h}^{G}|$ (as a field) (right) for the random field example (top) and the high contrast example (bottom).}\label{fig5-5}
\end{figure}

\section{Conclusions}\label{sec-6}
We have proposed new optimal local approximation spaces for the MS-GFEM based on local singular value decompositions of the partition of unity operators $Pu=\chi_{i}u$. An important feature of our method is that we approximate $\chi_{i}u$ instead of the exact solution $u$ locally within the GFEM scheme. Concerning theoretical aspects, we have given a rigorous proof of the nearly exponential decay rate for problems with mixed boundary conditions defined on general Lipschitz domains and investigated the influence of the oversampling size on the rate of convergence of the optimal local approximation, which had been missing in previous studies. Concerning practical aspects, we have proposed an easy-to-implement method for generating the local discrete $A$-harmonic spaces with a substantial reduction in computational cost by solving Steklov eigenproblems. It is important to note that the method and theory developed in this paper can be easily generalized to other positive definite PDEs in which a Caccioppoli-type inequality and Weyl asymptotics for the related eigenvalue problem are available (see the proof of \cref{lem:2-1}). Furthermore, in contrast to other multiscale methods (e.g., the LOD method \cite{maalqvist2014localization} and the MsFEM \cite{hou1997multiscale}), where the size of the coarse mesh is required to be small enough (at least theoretically) to attain convergence, the convergence of the MS-GFEM is guaranteed for an arbitrary coarse mesh provided that sufficiently many eigenfunctions are used for the local approximations.

Building on the work in this paper, in the near future we will investigate the discrete error estimate of the method, which has not been touched upon yet in this paper. Another focus of future work is the theoretical investigation of the contrast ($\beta/\alpha$) independent rate of convergence of the method. Numerical expriments in \cite{babuvska2020multiscale} have shown that the nearly exponential convergence rate of the MS-GFEM for composite materials is independent (or nearly independent) of the contrast. 

\appendix

\section{Proof of \cref{lem:2-1}}
\label{sec:A.3}

\begin{proof}
Define
\begin{equation}
\widetilde{H}_{A}(\omega^{\ast}) = \big\{v\in H_{A}(\omega^{\ast})\;: \;(v,1)_{L^{2}(\omega^{\ast})} = 0  \big\}.
\end{equation}
In view of the decomposition $H_{A}(\omega^{\ast})=\widetilde{H}_{A}(\omega^{\ast}) \oplus \mathbb{R}$, we can decompose $u\in H_{A}(\omega^{\ast})$ and $W_{m}(\omega^{\ast})\subset H_{A}(\omega^{\ast})$ in this way as $u = \tilde{u} + c$ and $W_{m}(\omega^{\ast}) = \widetilde{W}_{m}(\omega^{\ast}) \oplus \mathbb{R}$. It suffices to prove (\ref{eq:2-0-0}) with $u=\tilde{u}\in \widetilde{H}_{A}(\omega^{\ast})$ and $W_{m}(\omega^{\ast}) = \widetilde{W}_{m}(\omega^{\ast})\subset \widetilde{H}_{A}(\omega^{\ast})$.

We want to find an upper bound for the quantity
\begin{equation}\label{eq:2-3}
R= \sup_{u\in \widetilde{H}_{A}(\omega^{\ast})} \inf_{v\in \widetilde{W}_{m}(\omega^{\ast})} \frac{\Vert u -v\Vert_{L^{2}(\omega^{\ast})}}{\Vert u \Vert_{a,\omega^{\ast}}}.
\end{equation}
Fix $u\in \widetilde{H}_{A}(\omega^{\ast})$ and choose $v = \mathcal{P}_{m}u$, where $\mathcal{P}_{m}u$ denotes the projection of $u$ onto $\widetilde{W}_{m}(\omega^{\ast})$ with respect to the norm $\Vert \cdot \Vert_{a,\omega^{\ast}}$. Since $\Vert u - \mathcal{P}_{m}u\Vert_{a,\omega^{\ast}} \leq \Vert u \Vert_{a,\omega^{\ast}}$, it follows that
\begin{equation}\label{eq:2-4}
R \leq \sup_{u\in \widetilde{H}_{A}(\omega^{\ast})}  \frac{\Vert u -\mathcal{P}_{m}u\Vert_{L^{2}(\omega^{\ast})}}{\Vert u -  \mathcal{P}_{m}u \Vert_{a,\omega^{\ast}}} \leq \sup_{u\in \widetilde{W}^{\bot}_{m}(\omega^{\ast})}  \frac{\Vert u \Vert_{L^{2}(\omega^{\ast})}}{\Vert u \Vert_{a,\omega^{\ast}}},
\end{equation}
where $ \widetilde{W}^{\bot}_{m}(\omega^{\ast})$ denotes the orthogonal complement of $\widetilde{W}_{m}(\omega^{\ast})$ in $ \widetilde{H}_{A}(\omega^{\ast})$, i.e.,
\begin{equation}\label{eq:2-5}
\begin{array}{lll}
{\displaystyle  \widetilde{W}^{\bot}_{m}(\omega^{\ast}) = \{u\in  \widetilde{H}_{A}(\omega^{\ast})\, :\, a_{\omega^{\ast}}(u, v) = 0, \;\;\forall v\in  \widetilde{W}_{m}(\omega^{\ast})\} }\\[2mm]
{\displaystyle \qquad \qquad = \{u\in  \widetilde{H}_{A}(\omega^{\ast})\, :\, a_{\omega^{\ast}}(u, v) = 0, \;\;\forall v\in {W}_{m}(\omega^{\ast})\} }\\[2mm]
{\displaystyle\qquad \qquad  = \{u\in \widetilde{H}_{A}(\omega^{\ast})\, :\, a_{\omega^{\ast}}(u, \mathcal{P}^{A}v) = 0, \;\;\forall v\in \Psi_{m}(\omega^{\ast})\}. }
\end{array}
\end{equation}
By the definition of the projection $\mathcal{P}^{A}$, we have $a_{\omega^{\ast}}(u, \mathcal{P}^{A}v) = a_{\omega^{\ast}}(u, v)$ for all $u\in H_{A}(\omega^{\ast})$. Consequently,
\begin{equation}\label{eq:2-6}
\begin{array}{lll}
{\displaystyle \widetilde{W}^{\bot}_{m}(\omega^{\ast}) = \{u\in \widetilde{H}_{A}(\omega^{\ast})\,: \,a_{\omega^{\ast}}(u, v) = 0, \;\;\forall v\in \Psi_{m}(\omega^{\ast})\} }\\[2mm]
{\displaystyle \qquad \quad \quad =  \{u\in H_{A}(\omega^{\ast})\, :\, (u, v)_{L^{2}(\omega^{\ast})} = 0, \;\;\forall v\in \Psi_{m}(\omega^{\ast})\} }\\[2mm]
{\displaystyle  \subset  \Psi^{\bot}_{m}(\omega^{\ast}) =\{u\in H^{1}(\omega^{\ast})\, :\, (u, v)_{L^{2}(\omega^{\ast})} = 0, \;\;\forall v\in \Psi_{m}(\omega^{\ast})\} }.
\end{array}
\end{equation}
It follows that
\begin{equation}\label{eq:2-7}
\displaystyle R  \leq \sup_{u\in \Psi^{\bot}_{m}(\omega^{\ast})}  \frac{\Vert u \Vert_{L^{2}(\omega^{\ast})}}{\Vert u \Vert_{a,\omega^{\ast}}} = \Big(\inf_{u\in \Psi^{\bot}_{m}(\omega^{\ast})}  \frac{\Vert u \Vert_{a,\omega^{\ast}}}{\Vert u \Vert_{L^{2}(\omega^{\ast})}}\Big)^{-1}.
\end{equation}
By the minimum principle of the $(m+1)$-th eigenvalue, we see that $R\leq \lambda_{m+1}^{-1/2}$, where $\lambda_{m+1}$ is the largest eigenvalue associated with $\Psi_{m+1}(\omega^{\ast})$. Since the coefficient $A({\bm x})$ satisfies $A({\bm x})\xi \cdot\xi\geq \alpha|\xi|^{2}$ for all ${\bm x}\in \Omega$ and $\xi \in \mathbb{R}^{d}$, applying the comparison principle of eigenvalue problem to \cref{eq:2-1} and the Neumann Laplacian eigenproblem
\begin{equation}\label{eq:2-7-r}
(\nabla v, \nabla \varphi)_{L^{2}(\omega^{\ast})} = \mu(v,\,\varphi)_{L^{2}(\omega^{\ast})}, \quad \forall \varphi\in H^{1}(\omega^{\ast}),
\end{equation}
we get $\lambda_{m+1}  \geq \alpha\mu_{m+1}$, where $\mu_{m+1}$ is the $(m+1)$-th eigenvalue of \cref{eq:2-7-r}. By a classical asymptotic estimate $\mu_{m+1} = 4\pi \big(C(m)H^{\ast}\gamma_{d}^{1/d}\big)^{-2}$ \cite[Chapter VI]{Courant1989}, we obtain
\begin{equation}\label{eq:2-7-0}
R\leq C(m)H^{\ast}\frac{\gamma_{d}^{1/d}}{\sqrt{4\pi}}\alpha^{-1/2},
\end{equation}
where $H^{\ast}$ is the side length of the cube $\omega^{\ast}$, $\gamma_{d}$ is the volume of the unit ball in $\mathbb{R}^{d}$, and $C(m) = m^{-1/d}(1+o(1))$. It follows from \cref{eq:2-3,eq:2-7-0} that for any $u\in \widetilde{H}_{A}(\omega^{\ast})$, there exists a $v_{u}\in \widetilde{W}_{m}(\omega^{\ast})$ such that
\begin{equation}
\Vert u - v_{u}\Vert_{L^{2}(\omega^{\ast})} = \inf_{v\in \widetilde{W}_{m}(\omega^{\ast})} \Vert u - v\Vert_{L^{2}(\omega^{\ast})} \leq C(m)H^{\ast}\frac{\gamma_{d}^{1/d}}{\sqrt{4\pi}}\alpha^{-1/2}\Vert u \Vert_{a,\omega^{\ast}},
\end{equation}
which completes the proof. \qquad \end{proof}

\bibliographystyle{siamplain}
\bibliography{references}

\begin{thebibliography}{10}

\bibitem{arbogast2006subgrid}
{\sc T.~Arbogast and K.~J. Boyd}, {\em Subgrid upscaling and mixed multiscale
  finite elements}, SIAM Journal on Numerical Analysis, 44 (2006),
  pp.~1150--1171, \url{https://doi.org/10.1137/050631811}.

\bibitem{babuvska2014machine}
{\sc I.~Babu{\v{s}}ka, X.~Huang, and R.~Lipton}, {\em Machine computation using
  the exponentially convergent multiscale spectral generalized finite element
  method}, ESAIM: Mathematical Modelling and Numerical Analysis, 48 (2014),
  pp.~493--515, \url{https://doi.org/10.1051/m2an/2013117}.

\bibitem{babuska2011optimal}
{\sc I.~Babuska and R.~Lipton}, {\em Optimal local approximation spaces for
  generalized finite element methods with application to multiscale problems},
  Multiscale Modeling \& Simulation, 9 (2011), pp.~373--406,
  \url{https://doi.org/10.1137/100791051}.

\bibitem{babuvska2020multiscale}
{\sc I.~Babu{\v{s}}ka, R.~Lipton, P.~Sinz, and M.~Stuebner}, {\em
  Multiscale-spectral gfem and optimal oversampling}, Computer Methods in
  Applied Mechanics and Engineering, 364 (2020), p.~112960,
  \url{https://doi.org/10.1016/j.cma.2020.112960}.

\bibitem{babuvska1997partition}
{\sc I.~Babu{\v{s}}ka and J.~M. Melenk}, {\em The partition of unity method},
  International journal for numerical methods in engineering, 40 (1997),
  pp.~727--758,
  \url{https://doi.org/10.1002/(SICI)1097-0207(19970228)40:4<727::AID-NME86>3.0.CO;2-N}.

\bibitem{butler2020high}
{\sc R.~Butler, T.~Dodwell, A.~Reinarz, A.~Sandhu, R.~Scheichl, and
  L.~Seelinger}, {\em High-performance dune modules for solving large-scale,
  strongly anisotropic elliptic problems with applications to aerospace
  composites}, Computer Physics Communications, 249 (2020), p.~106997,
  \url{https://doi.org/10.1016/j.cpc.2019.106997}.

\bibitem{chen2021exponential}
{\sc Y.~Chen, T.~Y. Hou, and Y.~Wang}, {\em Exponential convergence for
  multiscale linear elliptic pdes via adaptive edge basis functions},
  Multiscale Modeling \& Simulation, 19 (2021), pp.~980--1010,
  \url{https://doi.org/10.1137/20M1352922}.

\bibitem{chen2003mixed}
{\sc Z.~Chen and T.~Hou}, {\em A mixed multiscale finite element method for
  elliptic problems with oscillating coefficients}, Mathematics of Computation,
  72 (2003), pp.~541--576, \url{https://doi.org/10.1090/S0025-5718-02-01441-2}.

\bibitem{chung2015mixed}
{\sc E.~T. Chung, Y.~Efendiev, and C.~S. Lee}, {\em Mixed generalized
  multiscale finite element methods and applications}, Multiscale Modeling \&
  Simulation, 13 (2015), pp.~338--366,
  \url{https://doi.org/10.1016/j.jcp.2014.05.007}.

\bibitem{Courant1989}
{\sc R.~Courant and D.~Hilbert}, {\em Methods of Mathematical Physics}, vol.~I,
  Interscience Publishers, New York, 1953.

\bibitem{dolean2012analysis}
{\sc V.~Dolean, F.~Nataf, R.~Scheichl, and N.~Spillane}, {\em Analysis of a
  two-level schwarz method with coarse spaces based on local
  dirichlet-to-neumann maps}, Computational Methods in Applied Mathematics, 12
  (2012), pp.~391--414, \url{https://doi.org/10.2478/cmam-2012-0027}.

\bibitem{efendiev2013generalized}
{\sc Y.~Efendiev, J.~Galvis, and T.~Y. Hou}, {\em Generalized multiscale finite
  element methods (gmsfem)}, Journal of Computational Physics, 251 (2013),
  pp.~116--135, \url{https://doi.org/10.1016/j.jcp.2013.04.045}.

\bibitem{efendiev2014generalized}
{\sc Y.~Efendiev, J.~Galvis, G.~Li, and M.~Presho}, {\em Generalized multiscale
  finite element methods: Oversampling strategies}, International Journal for
  Multiscale Computational Engineering, 12 (2014), pp.~465--484,
  \url{https://doi.org/10.1615/IntJMultCompEng.2014007646}.

\bibitem{efendiev2009multiscale}
{\sc Y.~Efendiev and T.~Y. Hou}, {\em Multiscale finite element methods: theory
  and applications}, Springer Science \& Business Media, New York, 2009.

\bibitem{henning2014localized}
{\sc P.~Henning and A.~M{\aa}lqvist}, {\em Localized orthogonal decomposition
  techniques for boundary value problems}, SIAM Journal on Scientific
  Computing, 36 (2014), pp.~A1609--A1634,
  \url{https://doi.org/10.1137/130933198}.

\bibitem{henning2013oversampling}
{\sc P.~Henning and D.~Peterseim}, {\em Oversampling for the multiscale finite
  element method}, Multiscale Modeling \& Simulation, 11 (2013),
  pp.~1149--1175, \url{https://doi.org/10.1137/120900332}.

\bibitem{hou1999convergence}
{\sc T.~Hou, X.-H. Wu, and Z.~Cai}, {\em Convergence of a multiscale finite
  element method for elliptic problems with rapidly oscillating coefficients},
  Mathematics of computation, 68 (1999), pp.~913--943,
  \url{https://doi.org/10.1090/S0025-5718-99-01077-7}.

\bibitem{hou2016optimal}
{\sc T.~Y. Hou and P.~Liu}, {\em Optimal local multi-scale basis functions for
  linear elliptic equations with rough coefficients}, Discrete \& Continuous
  Dynamical Systems - A, 36 (2016), pp.~4451--4476,
  \url{https://doi.org/10.3934/dcds.2016.36.4451}.

\bibitem{hou1997multiscale}
{\sc T.~Y. Hou and X.-H. Wu}, {\em A multiscale finite element method for
  elliptic problems in composite materials and porous media}, Journal of
  computational physics, 134 (1997), pp.~169--189,
  \url{https://doi.org/10.1006/jcph.1997.5682}.

\bibitem{maalqvist2014localization}
{\sc A.~M{\aa}lqvist and D.~Peterseim}, {\em Localization of elliptic
  multiscale problems}, Mathematics of Computation, 83 (2014), pp.~2583--2603,
  \url{https://doi.org/10.1090/S0025-5718-2014-02868-8}.

\bibitem{melenk1995generalized}
{\sc J.~M. Melenk}, {\em On generalized finite element methods}.
\newblock Ph.D. thesis, Department of Mathematics, University of Maryland,
  1995.

\bibitem{ming2005analysis}
{\sc P.~Ming, P.~Zhang, et~al.}, {\em Analysis of the heterogeneous multiscale
  method for elliptic homogenization problems}, Journal of the American
  Mathematical Society, 18 (2005), pp.~121--156,
  \url{https://doi.org/10.1090/S0894-0347-04-00469-2}.

\bibitem{owhadi2011localized}
{\sc H.~Owhadi and L.~Zhang}, {\em Localized bases for finite-dimensional
  homogenization approximations with nonseparated scales and high contrast},
  Multiscale Modeling \& Simulation, 9 (2011), pp.~1373--1398,
  \url{https://doi.org/10.1137/100813968}.

\bibitem{owhadi2014polyharmonic}
{\sc H.~Owhadi, L.~Zhang, and L.~Berlyand}, {\em Polyharmonic homogenization,
  rough polyharmonic splines and sparse super-localization}, ESAIM:
  Mathematical Modelling and Numerical Analysis, 48 (2014), pp.~517--552,
  \url{https://doi.org/10.1051/m2an/2013118}.

\bibitem{pinkus1985n}
{\sc A.~Pinkus}, {\em n-widths in Approximation Theory}, Springer-Verlag,
  Berlin, 1985.

\bibitem{spillane2014abstract}
{\sc N.~Spillane, V.~Dolean, P.~Hauret, F.~Nataf, C.~Pechstein, and
  R.~Scheichl}, {\em Abstract robust coarse spaces for systems of pdes via
  generalized eigenproblems in the overlaps}, Numerische Mathematik, 126
  (2014), pp.~741--770, \url{https://doi.org/10.1007/s00211-013-0576-y}.

\bibitem{weinan2007heterogeneous}
{\sc E.~Weinan, B.~Engquist, X.~Li, W.~Ren, and E.~Vanden-Eijnden}, {\em
  Heterogeneous multiscale methods: A review}, Communications in Computational
  Physics, 2 (2007), pp.~367--450, \url{https://doi.org/10.1.1.225.9038}.

\end{thebibliography}
\end{document}